\documentclass[12pt]{article}

\linespread{1.2}         
\usepackage[T1]{fontenc}
\usepackage[utf8]{inputenc}
\usepackage{microtype}
\usepackage{geometry}
\geometry{
  a4paper,         
  textwidth=16cm,  
  textheight=24cm, 
  heightrounded,   
  hratio=1:1,      
}
\usepackage{titlesec}
\titleformat{\subsection}[runin]{\bf}{\thesubsection.}{3pt}{}
\usepackage{amsmath, amsfonts, amssymb, tikz-cd, mathtools}
\usepackage{graphicx}
\usepackage{caption, subcaption}
\usepackage{enumitem}

\usepackage{color}
\usepackage[english]{babel}
\usepackage{float}
\usepackage[toc,page]{appendix}
\usepackage{listings}
\usepackage{lscape}
\usepackage{pgfplots}
\pgfplotsset{compat=1.15}
\usepackage{mathrsfs}
\usetikzlibrary{arrows}
\usepackage{import}
\usepackage{xifthen}
\usepackage{pdfpages}
\usepackage{transparent}
\usepackage{hyperref}
\usepackage{mathtools}

\usepackage{amscd}

\usepackage{amsthm}
\newtheorem{theorem}{Theorem}[section]
\newtheorem{lemma}[theorem]{Lemma}
\newtheorem{proposition}[theorem]{Proposition}
\newtheorem{corollary}[theorem]{Corollary}

\theoremstyle{definition}
\newtheorem{definition}[theorem]{Definition}

\newtheorem*{question*}{Question}

\theoremstyle{remark}
\newtheorem{remark}[theorem]{Remark}
\newtheorem{example}[theorem]{Example}

\usepackage{bbm}
\usepackage{hyperref}
\definecolor{myPurp}{RGB}{105,73,148}
\definecolor{myRed}{RGB}{228,37,53}
\definecolor{myCyan}{RGB}{0,95,114}
\hypersetup{
     colorlinks=true,
     linkcolor = myRed,
     citecolor = myPurp,      
     urlcolor= myCyan,
     }


\usepackage[
backend=biber,
style=iso-alphabetic,
sorting=nyt
]{biblatex}
\addbibresource{references.bib}
\usepackage{csquotes}

\numberwithin{equation}{section}

\newcommand{\Ham}{\mathrm{Ham}}
\newcommand{\stab}{\mathrm{Stab}}
\newcommand{\Diff}{\mathrm{Diff}}

\newcommand{\Homeo}{\mathrm{Homeo}}

\newcommand{\Frederic}{Fr\'{e}d\'{e}ric Le Roux}
\newcommand{\Humiliere}{Humili\`ere}

\title{\bf Balanced curves, quasimorphisms, and the equator conjecture}
\author{\sc Yongsheng Jia \and \sc Richard Webb}
\date{}

\begin{document}

\maketitle

\begin{abstract}

We construct a new infinite-dimensional family of homogeneous quasimorphisms on the group of Hamiltonian diffeomorphisms of the two-sphere. 
Moreover, for any constant $K$ less than the total area of the sphere, we produce unbounded homogeneous quasimorphisms that vanish on any map supported on some disk of area at most $K$.
As an application, we prove an analogue of the equator conjecture, namely that the space of equators equipped with any choice of quantitative fragmentation metric has infinite diameter.
To prove our results, we introduce tools that draw inspiration from the theory of curve graphs used to study mapping class groups.
\end{abstract}

\noindent{\it Keywords}: Quasimorphisms, Hamiltonian diffeomorphisms, group actions

\tableofcontents

\section{Introduction}\label{section-Introduction}
Let $\omega$ be a symplectic form on $S^2$ of total area equal to $1$. In this paper, we study the smooth simple closed curves that separate $S^2$ into two components of equal area (called the \emph{equators}), and their interaction with the group of Hamiltonian diffeomorphisms $\Ham(S^2)$. Our methods also apply in the non-smooth setting, i.e. the related group of (isotopically-trivial) homeomorphisms of $S^2$ that preserve the measure induced by $\omega$, denoted by $\Homeo_0(S^2,\omega)$.
In fact, the former group is $C^0$-dense in the latter, see \cite{oh2006c,sikorav2007approximation,munkres1960obstructions}.

The group $\Ham(S^2)$ acts naturally on the space of equators, and it is well known that the action is transitive. 
A norm on $\Ham(S^2)$ (with the appropriate invariance properties) enables us to define a (pseudo-)metric on the space of equators, by defining the distance between $\alpha$ and $\beta$ to be the infimum of the set of norms of $f\in\Ham(S^2)$, such that $f(\alpha)=\beta$.
This can be thought of as the energy required to move $\alpha$ to $\beta$.
But there are different choices of norm (or metric) on $\Ham(S^2)$.
We will discuss the quantitative fragmentation metric and the Hofer metric.

Fix a constant $A$ such that $0<A<1$. For $f\in \Ham(S^2)$, the $A$\emph{-quantitative fragmentation norm} $|f|_A$ is defined to be the word length of $f$ with respect to the (infinite and conjugation-invariant) generating set given by all area-preserving maps supported on disks of area at most $A$. 
The $A$-\emph{quantitative fragmentation metric} on $\Ham(S^2)$ is defined by setting $d_A(f,g)=|f^{-1}g|_A$.
This is a bi-invariant metric, and can be similarly defined for $\Homeo_0(S^2,\omega)$.
This metric is intimately related to the algebraic structure of these groups,  see for example \cite{banyaga1978structure,LeRoux}.
In \cite{burago2008conjugation}, the quantitative fragmentation norm is discussed for the torus (in the displaceable case), and is shown to have infinite diameter.
In fact, this is also true when the fundamental group is complicated enough, as shown by Brandenbursky and K\k edra \cite{brandenbursky2022}.
The case of the sphere is therefore somewhat curious. We prove the following:

\begin{theorem}\label{alternative equator conjecture}
For every $0<A<1$, the space of equators on $S^2$ equipped with the $A$-quantitative fragmentation metric has infinite diameter.
\end{theorem}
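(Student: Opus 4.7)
The plan is to deduce Theorem \ref{alternative equator conjecture} from the quasimorphisms promised in the abstract. Fix $0<A<1$ and apply the main construction with $K=A$: this produces an unbounded homogeneous quasimorphism $\mu\colon\Ham(S^2)\to\mathbb{R}$ of some finite defect $D$ that vanishes on every element supported in a disk of area at most $A$. The standard quasimorphism bound on a conjugation-invariant word norm now reads: if $|f|_A=n$, fragment $f=h_1\cdots h_n$ with each $h_i$ disk-supported of area at most $A$; applying the quasimorphism inequality $(n-1)$ times and using $\mu(h_i)=0$ gives $|\mu(f)|\le(n-1)D$, so for every $f\in\Ham(S^2)$,
\[
|f|_A \;\ge\; \frac{|\mu(f)|}{D}.
\]

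To convert this into a lower bound on the equator distance, fix an equator $\alpha$. Because $\Ham(S^2)$ acts transitively on equators, every other equator has the form $g(\alpha)$, and the set of Hamiltonian diffeomorphisms carrying $\alpha$ to $g(\alpha)$ is exactly $g\cdot\stab(\alpha)$. Using $|\mu(gh)-\mu(g)-\mu(h)|\le D$ for $h\in\stab(\alpha)$, one obtains
\[
d_A(\alpha,g(\alpha)) \;=\; \inf_{h\in\stab(\alpha)}|gh|_A \;\ge\; \frac{1}{D}\Bigl(|\mu(g)| \;-\; \sup_{h\in\stab(\alpha)}|\mu(h)| \;-\; D\Bigr).
\]
Provided $\mu|_{\stab(\alpha)}$ is bounded, the unboundedness of $\mu$ on $\Ham(S^2)$ gives a sequence $g_n$ with $|\mu(g_n)|\to\infty$, and then the equators $\beta_n:=g_n(\alpha)$ satisfy $d_A(\alpha,\beta_n)\to\infty$, so the equator space has infinite diameter.

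The hard part will be the boundedness of $\mu$ on $\stab(\alpha)$. Following the curve-graph philosophy flagged in the abstract, I expect $\mu$ to arise from a Bestvina--Fujiwara-type construction associated to a $\Ham(S^2)$-action on a hyperbolic graph whose vertices are the balanced curves: in such a setup, vertex stabilizers act elliptically, have bounded orbits, and therefore carry bounded quasimorphism values. The vanishing on disk-supported elements would follow from the same mechanism, since any element supported in a disk of area at most $A$ should move each balanced curve by only a bounded amount in the graph (for area-theoretic reasons). Granting the stabilizer boundedness, the two short estimates above complete the proof.
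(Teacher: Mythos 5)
Your proposal is correct and takes essentially the same route as the paper: fix an equator $\alpha$, take a homogeneous quasimorphism vanishing on $\stab(\alpha)$ and on all maps supported on disks of area at most $A$, bound $|g|_A$ from below by $|\varphi(g)|/D$ via the fragmentation, and use the coset description $\{f: f(\alpha)=g(\alpha)\}=g\cdot\stab(\alpha)$ to push this to a lower bound on $d_A$. The stabiliser boundedness you defer is exactly what the paper's Theorem~\ref{Q(Diff) is infinite dimensional} supplies (proved via basepoint-independence of the homogenised Bestvina--Fujiwara quasimorphisms, so that taking the stabilised curve as basepoint forces vanishing), and your area-theoretic mechanism for vanishing on disk-supported maps is realised in the paper by choosing $\varepsilon<\min(A,1-A)$ so that each such map fixes an $\varepsilon$-balanced curve.
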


The theorem also holds in the non-smooth setting, namely, that the $\Homeo_0(S^2,\omega)$-orbit of the standard equator of $S^2$, has infinite diameter.
The infinite diameter statement for the space of equators is at least as hard to establish as it is for the groups $\Ham(S^2)$ or $\Homeo_0(S^2,\omega)$.
Indeed, as a consequence of the above, the $A$-quantitative fragmentation metric on these groups has infinite diameter for any choice of $A$.
For the displaceable case $0<A<\frac{1}{2}$, that $\Ham(S^2)$ and $\Homeo_0(S^2,\omega)$ have infinite diameter is not new, and was first proved by Cristofaro-Gardiner, \Humiliere, Mak, Seyfaddini, and Smith in \cite{cristofaro2022quantitative}.

Recently Brandenbursky and Shelukhin proved that the space of equators with the $L^p$ metric has infinite diameter, for all $p\geq 1$, see \cite{brandenbursky2025lpdiameterspacecontractibleloops}. Both this result and Theorem~\ref{alternative equator conjecture} are analogues of the equator conjecture, discussed below.

We now discuss the Hofer metric \cite{hofer1990topological}.
When viewed as a norm on $\Ham(S^2)$, this is a measure of the required total variance of a generating Hamiltonian, in the sense of an integral of the difference between its maxima and minima.
In particular, the absence of derivatives in its definition makes the Hofer metric/norm rather subtle, and so, the fact that it induces a non-degenerate metric is remarkable. Hofer proved this for the Hamiltonian diffeomorphism group of $\mathbb{R}^{2n}$.
For $S^2$ (and tame, rational symplectic manifolds) it was proved by Polterovich \cite{polterovich1993symplectic}, and for general symplectic manifolds by Lalonde--McDuff \cite{lalonde1995geometry}. For more background on the Hofer metric, we refer the reader to \cite{mcduff2017introduction} and the references therein.

With the Hofer metric, $\Ham(S^2)$ has infinite diameter, a result originally due to Polterovich \cite{polterovich1998hofer}.
In fact, the Hofer geometry of $\Ham(S^2)$ is now known to contain quasi-flats of arbitrary dimension, see \cite{cristofarogardiner2024,polterovich2023lagrangian}.
On the other hand, the Hofer metric on the space of equators is more mysterious; indeed, the equator conjecture asserts that its Hofer diameter is infinite \cite[Problem~32]{mcduff2017introduction} and this is still open.

For the unit disk, Khanevsky proved that the space of \emph{diameters} has infinite Hofer diameter \cite{khanevsky2009hofer}. The strategy of Khanevsky's proof is to find an unbounded Hofer-Lipschitz quasimorphism that vanishes on stabilisers of diameters. This turns out to be a useful framework for showing spaces of Lagrangians have infinite Hofer diameter.

In this paper we also construct and make use of quasimorphisms. 
The general problem of constructing quasimorphisms has played an important role in geometric group theory and symplectic geometry since Gromov introduced the notion of bounded cohomology in the early 80s \cite{gromov1982volume}. 
See also \cite{bavard1991longueur,calegari2009scl,ghys2007,kotschick2004} and the references therein.

There is an interesting history of quasimorphisms defined on $\Ham(S^2)$.
Entov--Polterovich \cite{entov2003calabi} extended the Calabi homomorphism (on any displaceable open disk) to a \emph{Calabi} quasimorphism on $\Ham(S^2)$.
This is a rather beautiful result, especially in light of the fact that the Calabi homomorphism cannot be extended to a homomorphism on $\Ham(S^2)$ (because $\Ham(S^2)$ is simple \cite{banyaga1978structure}).
Gambaudo--Ghys \cite{gambaudo2004commutators} constructed an infinite-dimensional vector space of homogeneous quasimorphisms on $\Ham(S^2)$.
Cristofaro-Gardiner, \Humiliere, Mak, Seyfaddini, and Smith \cite{cristofaro2022quantitative} proved that there is an infinite-dimensional vector space of $C^0$-continuous and Hofer-Lipschitz quasimorphisms on $\Ham(S^2)$, answering questions of Entov--Polterovich--Py \cite{entov2012continuity}, and these extend to $\Homeo_0(S^2,\omega)$.

We construct a new family of quasimorphisms on $\Ham(S^2)$ and $\Homeo_0(S^2,\omega)$ to prove Theorem~\ref{alternative equator conjecture}.
In fact, as we shall see, it turns out to be useful to consider the action on sets of curves that are larger than the set of equators.
Let $\varepsilon>0$. 
We say that a simple closed curve of $S^2$ is $\varepsilon$\emph{-balanced} if both complementary components have area at least $\varepsilon$. For example, an equator is $\varepsilon$-balanced, for any $0<\varepsilon\leq \frac{1}{2}$.

\begin{theorem}\label{Q(Diff) is infinite dimensional}
For any $\varepsilon>0$, the vector space of $C^0$-continuous homogeneous quasimorphisms on $\Ham(S^2)$, and $\Homeo_0(S^2,\omega)$, that vanish on stabilisers of $\varepsilon$-balanced curves, is infinite dimensional.
\end{theorem}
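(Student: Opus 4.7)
The plan is to imitate the Bestvina--Fujiwara machinery \`a la the curve graph of a surface. Concretely, I would first associate to the triple $(S^2,\omega,\varepsilon)$ a graph $\mathcal{G}_\varepsilon$ whose vertices are (isotopy or orbit classes of) $\varepsilon$-balanced curves, with edges declared between two curves when they are ``close'' in some geometric sense --- for instance, when they can be realised as disjoint curves, or when one lies in a small embedded annular neighbourhood of the other. The measure-preserving group $\Homeo_0(S^2,\omega)$ (and hence $\Ham(S^2)$) acts on $\mathcal{G}_\varepsilon$ by graph isometries, simply because both the balanced condition and the chosen adjacency relation are symplectomorphism invariants. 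By construction, the stabiliser of any $\varepsilon$-balanced curve fixes a vertex and therefore acts elliptically.

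The core technical step is to prove that $\mathcal{G}_\varepsilon$ is Gromov hyperbolic and admits ``many'' loxodromic elements of $\Ham(S^2)$ satisfying a weak properness axiom (WPD or WWPD of Bestvina--Fujiwara, or the Bestvina--Bromberg--Fujiwara projection axioms). For hyperbolicity I would try to adapt a Masur--Minsky style argument: produce coarse subsurface-like projections to annular neighbourhoods of balanced curves and verify a bounded geodesic image / bounded projection property, which then implies hyperbolicity via a unicorn-path or guessing-geodesics criterion. For the existence of loxodromics I would exhibit explicit Hamiltonian diffeomorphisms --- for instance, twist-like or pseudo-Anosov-type maps supported in balanced subsurfaces --- and bound their translation lengths on $\mathcal{G}_\varepsilon$ from below using the projection estimates. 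Infinite dimensionality of the space of quasimorphisms then follows by producing an infinite family of such loxodromics which are pairwise ``independent'' in the sense of Bestvina--Fujiwara, and applying their counting-quasimorphism construction; the resulting quasimorphisms automatically vanish on elements acting elliptically, and in particular on stabilisers of $\varepsilon$-balanced curves.

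It remains to upgrade the Bestvina--Fujiwara quasimorphisms to be \emph{$C^0$-continuous}. Here I would aim to show a quantitative statement of the form: if $f_n \to f$ in $C^0$, then the displacement of some fixed basepoint $\alpha_0 \in \mathcal{G}_\varepsilon$ under $f_n^{-1} f$ stays bounded in $\mathcal{G}_\varepsilon$. This typically reduces to a compactness-type lemma saying that a $C^0$-small perturbation of an $\varepsilon$-balanced curve can be connected to the original curve by a uniformly bounded path in $\mathcal{G}_\varepsilon$, which in turn should follow by smoothing and a standard fragmentation argument in a thin annular neighbourhood. Homogenisation of the resulting quasimorphisms preserves both vanishing on stabilisers and $C^0$-continuity (the latter by a standard limiting argument), giving the desired infinite-dimensional space.

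The main obstacle I expect is establishing hyperbolicity of $\mathcal{G}_\varepsilon$ together with controlled loxodromics: unlike in the mapping class group setting, here one has to detect and estimate a \emph{metric} (area) invariant, and the adjacency relation must be chosen just carefully enough that nested balanced subsurfaces give honest projections and that typical Hamiltonian twists act with positive translation length. Getting those two requirements to hold simultaneously --- hyperbolicity on one hand and a robust loxodromic/WPD element on the other --- is the heart of the proof, and the step that justifies the paper's slogan of a ``curve-graph''-style tool for $\Ham(S^2)$.
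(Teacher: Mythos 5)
Your overall architecture matches the paper's: a graph of $\varepsilon$-balanced curves, a guessing-geodesics proof of hyperbolicity via subsurface-style projections, loxodromics supported on small subsurfaces, and the Bestvina--Fujiwara counting quasimorphisms. But there is one genuine gap at the step you describe as verifying ``a weak properness axiom (WPD or WWPD \dots)''. For $\Ham(S^2)$ acting on any graph of this kind, WPD fails badly: the coarse stabiliser of any pair of far-apart vertices contains the (uncountable) group of all Hamiltonian diffeomorphisms supported in a small disk disjoint from both curves, so the finiteness required by WPD is hopeless, and there is no evident substitute among the standard properness axioms. The paper's actual route to independence is the idea your proposal is missing: it works with the whole \emph{family} of graphs $\mathcal{C}^{\dagger}_{\varepsilon}(S^2)$ and the $1$-Lipschitz inclusions $\mathcal{C}^{\dagger}_{\varepsilon_2}(S^2)\rightarrow\mathcal{C}^{\dagger}_{\varepsilon_1}(S^2)$ for $\varepsilon_1<\varepsilon_2$. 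One produces $f$ supported on a witness $W$ one of whose boundary components is $\varepsilon_1$-balanced, so that $f$ is loxodromic on $\mathcal{C}^{\dagger}_{\varepsilon_2}$ but \emph{elliptic} on $\mathcal{C}^{\dagger}_{\varepsilon_1}$ (it fixes that boundary curve), and a second element $g$ loxodromic on $\mathcal{C}^{\dagger}_{\varepsilon_1}$ and hence on $\mathcal{C}^{\dagger}_{\varepsilon_2}$. If $f\sim g$ on $\mathcal{C}^{\dagger}_{\varepsilon_2}$, arbitrarily long segments of the axis of $g$ would be translated into a bounded neighbourhood of the axis of $f$, which the Lipschitz inclusion maps into a bounded subset of $\mathcal{C}^{\dagger}_{\varepsilon_1}$ --- contradicting that $g$'s axis embeds quasi-isometrically there. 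Without this (or some other) mechanism for independence, your proof does not close.

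Two smaller points. First, your proposed edge relation (disjointness, or containment in a thin annular neighbourhood) is delicate at $\varepsilon=\tfrac12$: the disjointness-only graph of equators is disconnected, which is why the paper also joins curves meeting transversely in two points. Second, for $C^0$-continuity the paper does not argue via continuity of the action on the graph; it invokes the Entov--Polterovich--Py criterion that a homogeneous quasimorphism on $\Ham(S^2)$ is $C^0$-continuous if and only if it vanishes on $\Ham(D)$ for all disks $D$ of area below some threshold --- which is immediate here, since any map supported in a small disk stabilises some $\varepsilon$-balanced curve. Your direct approach would additionally have to justify that homogenisation (a pointwise limit) preserves continuity, which is not automatic; the EPP criterion sidesteps this entirely and also supplies the extension to $\Homeo_0(S^2,\omega)$.
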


In particular, our quasimorphisms can be chosen to vanish on stabilisers of equators in $S^2$.
The existence of an unbounded Hofer-Lipschitz quasimorphism that vanishes on stabilisers of equators would prove the equator conjecture, see \cite{khanevsky2009hofer,serraille2025link} or Section~\ref{section-Alternative equator theorem}. 
Unfortunately, there seems to be tension between vanishing on stabilisers of equators and being Hofer-Lipschitz.
At time of writing, we do not know of a non-trivial example of a quasimorphism that we construct, which is proved to be (or not to be) Hofer-Lipschitz. 
The argument of Khanevsky~\cite{khanevsky2024quasimorphisms} does not readily apply to show that our quasimorphisms are not Hofer-Lipschitz, and there is a possibility that at least some of them are.
On the other hand, Serraille and Trifa \cite{serraille2025link} showed that some linear combinations of the Hofer-Lipschitz quasimorphisms in \cite{cristofaro2022quantitative} vanish on the stabilisers of equators.
Unfortunately, at present, it is not known whether these quasimorphisms vanish completely on $\Ham(S^2)$ or not.

\subsection*{Methodology and outline of proofs.} 
The quasimorphisms in Theorem~\ref{Q(Diff) is infinite dimensional} are constructed via group actions by isometries on Gromov hyperbolic graphs. Specifically, we make use of a criterion developed by Bestvina--Fujiwara~\cite{bestvina2002bounded} (see also~\cite{epstein1997second, fujiwara1998second}) which in fact constructs infinitely many quasimorphisms. 
Bestvina and Fujiwara showed that their criterion applies to the mapping class group by analysing its action on the curve graph,
see \cite{bestvina2002bounded} for more details. 
We discuss their criterion in Section~\ref{section-BF quasimorphisms}.

To apply the Bestvina--Fujiwara criterion to our setting, we must construct a Gromov hyperbolic and infinite-diameter graph for $\Ham(S^2)$ to act on by isometries (in fact, we provide uncountably many such graphs).
The graphs we introduce are analogues of the curve graphs for mapping class groups. 
Curve graphs were originally introduced by Harvey \cite{harvey1981boundary}.
Key properties of the curve graphs were established in the seminal work of Masur and Minsky \cite{Masur1999, masur2000geometry}, and now play a central role in the study of mapping class groups. 
In particular, it is shown in \cite{Masur1999} that the curve graph is Gromov hyperbolic, and that pseudo-Anosovs act as hyperbolic isometries. 

We also take inspiration from the non-conservative diffeomorphism groups.
Bowden, Hensel, and the second author \cite{bowden2022quasi}, introduced the \emph{fine curve graph}, and used the Bestvina--Fujiwara criterion to construct quasimorphisms on $\Diff_{0}(S_{g})$ for each $g\geq 1$.
This method breaks down for $S^2$ in the non-conservative setting (it must since $\Diff_0(S^2)$ is uniformly perfect \cite{burago2008conjugation,tsuboi2008uniform}).
This is because the fine curve graph of $S^2$ is either empty (there are no essential curves), or even if you permit inessential curves as vertices, this ends up having finite diameter (and so the Bestvina--Fujiwara criterion cannot apply).
However, if one equips $S^2$ with a symplectic form $\omega$, and fixes $0<\varepsilon\leq \frac{1}{2}$, and considers the induced subgraph of vertices corresponding to $\varepsilon$-balanced curves of $S^2$, then we obtain graphs with infinite diameter.
In this paper, we also put in edges between two vertices (that is, $\varepsilon$-balanced curves), when they intersect transversely twice
(This in particular ensures that the graph is connected for $\varepsilon=\frac{1}{2}$).
This defines the $\varepsilon$\emph{-balanced curve graph} $\mathcal{C}_\varepsilon^\dagger(S^2)$, on which $\Ham(S^2)$ acts by isometries. See Section~\ref{section-New graph} for the definition.

The connectivity of the graphs $\mathcal{C}_\varepsilon^\dagger(S^2)$ is proved in Section~\ref{Subsection-connectivity}. That they have infinite diameter is a consequence of Lemma~\ref{hyperbolics}. That these graphs are Gromov hyperbolic is proved in Section~\ref{Subsection-hyperbolicity}. Section~\ref{section-Hyperbolicity of the balanced curve graph} is perhaps the most technical part of the paper.

\subsection*{Constructing quasimorphisms}
Let $\varepsilon\in (0,\frac{1}{2}]$. To apply the Bestvina--Fujiwara criterion to $\Ham(S^2)$ acting on $\mathcal{C}_\varepsilon^\dagger(S^2)$, we require two hyperbolic isometries that satisfy a strong notion of independence. We provide a flexible construction of Hamiltonian diffeomorphisms that act hyperbolically on $\mathcal{C}_\varepsilon^\dagger(S^2)$. 
Roughly speaking, these are area-preserving diffeomorphisms that represent pseudo-Anosov mapping classes on (connected) subsurfaces of $S^2$, but we require the subsurface and each of its complementary disks in $S^2$ to have sufficiently small area in terms of $\varepsilon$. This is the content of Lemma~\ref{hyperbolics}.

In Section~\ref{section-BF quasimorphisms} we prove Theorem~\ref{Q(Diff) is infinite dimensional}.
There, we address the problem of constructing independent hyperbolic isometries of $\mathcal{C}_\varepsilon^\dagger(S^2)$.
To do this, we exploit the fact that when $0<\varepsilon'<\varepsilon$, there is a $1$-Lipschitz inclusion $\mathcal{C}_\varepsilon^\dagger(S^2)\rightarrow \mathcal{C}_{\varepsilon'}^\dagger(S^2)$.
Our construction of hyperbolic elements allows us to find elements that act on $\mathcal{C}_{\varepsilon'}^\dagger(S^2)$ hyperbolically (and therefore also on  $\mathcal{C}_{\varepsilon}^\dagger(S^2)$), and elements that act hyperbolically on 
$\mathcal{C}_{\varepsilon}^\dagger(S^2)$ but elliptically on $\mathcal{C}_{\varepsilon'}^\dagger(S^2)$.
Such a pair of elements must be independent, and so we can apply the Bestvina--Fujiwara criterion. 
That the quasimorphisms vanish on stabilisers of $\varepsilon$-balanced curves follows from the construction of the (homogenisation of the) Bestvina--Fujiwara quasimorphisms.
That they are $C^0$-continuous is then a consequence of~\cite{entov2012continuity}, where it is also shown that such quasimorphisms extend to $\Homeo(S^2,\omega)$.

Our proof that Theorem~\ref{alternative equator conjecture} follows from Theorem~\ref{Q(Diff) is infinite dimensional} is given in Section~\ref{section-Alternative equator theorem}. There, we also explain why the equator conjecture follows if one of our (non-trivial) quasimorphisms is Hofer-Lipschitz.
\section*{Acknowledgements}
The first author wants to thank Mladen Bestvina for introducing Sobhan Seyfaddini's work to him during the conference Group Actions and Low-Dimensional Topology in El Barco de Ávila.
The authors thank Jonathan Bowden, Michael Brandenbursky, Sebastian Hensel, Vincent \Humiliere,  Michael Khanevsky, \Frederic, Yusen Long, Pierre Py, Sobhan Seyfaddini, and Egor Shelukhin for helpful discussions. The first author is supported by EPSRC DTP EP/V520299/1.

\section{Preliminaries}\label{section-Preliminaries}

\subsection{Graphs as metric spaces}
Let $\Gamma$ be a connected graph. We will view $\Gamma$ as a metric space where the underlying set is the vertices of $\Gamma$, and the distance between $x$ and $y$ is defined by the length of a shortest path in $\Gamma$ connecting $x$ and $y$. In this paper we will automatically think of (connected) graphs as metric spaces without having to clarify this.

A \emph{geodesic} $P$ is a sequence of vertices $\{v_i\}_i$ such that $d_{\Gamma}(v_i,v_j)=|i-j|$ for all $i,j$.
A \emph{quasi-geodesic} is a sequence $\{v_i\}_i$ such that there exist $K$ and $L$ with 
$$
\frac{1}{K}|i-j|-L\leq d_{\Gamma}(v_i,v_j)\leq K|i-j|+L.
$$
More specifically, we say that $\{v_i\}_i$ is a $(K,L)$-quasi-geodesic, or less specifically, a $C$-quasi-geodesic when it is a $(C,C)$-quasi-geodesic.

In general, let $(X,d)$ be a metric space, and let $A$ be a subset of $X$. We denote by $N_{K}(A)$ the $K$-neighborhood of $A$ in $X$, i.e. 
$$
N_{K}(A)\coloneqq \{y\in X | \text{ } d(a,y)\leq K \text{ for some }a\in A\}.
$$

\subsection{Gromov hyperbolicity}
As we saw above, connected graphs determine metric spaces where each pair of vertices is connected by a geodesic. We now define what it means for a graph to be Gromov hyperbolic.
\begin{definition}\label{slim triangle}(Slim triangles)
We say that a  graph $(\Gamma,d_{\Gamma})$ is \emph{hyperbolic} if there exists $\delta\geq0$ such that for any three geodesics $P_1,P_2,P_3$ that form a triangle in $\Gamma$, we have that $P_1\subset N_{\delta}(P_2\cup P_3)$. More specifically, we say that $\Gamma$ is \emph{$\delta$-hyperbolic} or $\Gamma$ \emph{has $\delta$-slim triangles}.
\end{definition}
The definition above applies equally well to any geodesic metric space, but we will not require this generality in this paper. We refer the reader to \cite{alonso1991notes} for alternative definitions of hyperbolicity as well as proofs of their equivalence. 

We make use of the following remarkable and useful criterion for proving the hyperbolicity of a graph, known as the `Guessing Geodesics Lemma'. This was originally discovered by Masur and Schleimer, with alternative proofs by Bowditch, see \cite{masur2013geometry,bowditch2014uniform}.
\begin{lemma}\label{Guessing Geodesic Lemma}
Let $(\Gamma,d_{\Gamma})$ be a graph. If there exists some constant $\lambda\geq 0$, such that for all $x,y\in \Gamma$, we can find a connected subgraph $\mathcal{L}(x,y)\subset \Gamma$ containing $x,y$ such that
\begin{itemize}
    \item $\forall x,y,z \in \Gamma$, we have $\mathcal{L}(x, y) \subset N_{\lambda}(\mathcal{L}(x, z) \cup \mathcal{L}(z, y))$, and
    \item $\forall x,y \in \Gamma$ with $d_{\Gamma}(x,y)\leq 1$, the diameter of $\mathcal{L}(x,y)$ in $\Gamma$ is at most $\lambda$. 
\end{itemize}
Then $\Gamma$ is hyperbolic for some $\delta \geq 0$ depending only on $\lambda$.

\end{lemma}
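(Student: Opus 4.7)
The plan is to compare the guess subgraphs $\mathcal{L}(x,y)$ with genuine geodesics $[x,y]$ in $\Gamma$ and to show that they have Hausdorff distance bounded by a constant $C = C(\lambda)$. Once such a uniform bound is in hand, the slim triangle condition for geodesics follows cleanly: for a geodesic triangle on vertices $x, y, z$ and any $p \in [x, y]$, we have $p \in N_C(\mathcal{L}(x,y)) \subset N_{C + \lambda}(\mathcal{L}(x, z) \cup \mathcal{L}(z, y)) \subset N_{2C + \lambda}([x, z] \cup [z, y])$, so $\Gamma$ is $\delta$-hyperbolic with $\delta = 2C + \lambda$. Thus the entire task reduces to establishing this bounded Hausdorff distance.

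For the inclusion $\mathcal{L}(x,y) \subset N_C([x,y])$, I would fix a geodesic $x = x_0, x_1, \ldots, x_n = y$ and apply the first hypothesis iteratively by bisection: $\mathcal{L}(x_0, x_n)$ lies in the $\lambda$-neighbourhood of $\mathcal{L}(x_0, x_m) \cup \mathcal{L}(x_m, x_n)$ for $m = \lfloor n/2 \rfloor$, and continuing downward we reduce to pieces $\mathcal{L}(x_i, x_{i+1})$ between adjacent geodesic vertices. By the second hypothesis each such piece has diameter at most $\lambda$, and since it contains the adjacent vertices $x_i, x_{i+1}$, it lies in $N_\lambda([x,y])$. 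The delicate point is that naive accumulation of errors through the bisection gives an $O(\log n)$ bound; to upgrade this to an absolute constant one argues by contradiction, observing that if some $p \in \mathcal{L}(x,y)$ were very far from $[x,y]$, then tracing the bisection would force some sub-guess to carry a large-diameter set whose endpoints on the geodesic are close, eventually violating hypothesis (2).

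For the reverse inclusion $[x,y] \subset N_C(\mathcal{L}(x,y))$, which I expect to be the principal obstacle, the standard strategy is by contradiction. One selects a vertex $v$ on the geodesic $[x,y]$ maximising the distance $r$ from $\mathcal{L}(x,y)$, and applies the first hypothesis to the $\mathcal{L}$-triangle on $(x, v, y)$ together with the already-established inclusion for the sub-geodesics $[x, v]$ and $[v, y]$. Using the connectivity of $\mathcal{L}(x, y)$ one extracts a detour from $x$ to $y$ passing through points of $\mathcal{L}(x, y)$ near $v$; a careful length bookkeeping shows that if $r$ exceeded a threshold depending only on $\lambda$, this detour would be strictly shorter than $d(x,y)$, contradicting $[x,y]$ being a geodesic. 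Combining the two inclusions gives the required uniformly bounded Hausdorff distance, and $\delta$-hyperbolicity then follows as in the first paragraph, completing the proof.
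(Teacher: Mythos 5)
The paper does not prove this lemma; it quotes it from Masur--Schleimer and Bowditch (the remark after the statement even records the explicit constant from Bowditch's argument). Your overall architecture --- bounded Hausdorff distance between $\mathcal{L}(x,y)$ and geodesics, then transferring the $\mathcal{L}$-slimness to geodesic triangles --- is exactly the right skeleton, and the final paragraph-one computation $\delta=2C+\lambda$ is fine. The gaps are in the two inclusions.

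For $\mathcal{L}(x,y)\subset N_C([x,y])$: bisection gives $\lambda(\log_2 n+O(1))$, as you say, but your proposed upgrade to a constant does not work. Tracing the bisection only ever terminates in hypothesis (2) when the sub-guess has endpoints at distance $\le 1$; a ``large-diameter sub-guess whose endpoints on the geodesic are close'' violates nothing when those endpoints are at distance $2$ or more, so your contradiction merely re-derives the logarithmic bound. The actual upgrade is a localization/bootstrap: if $p\in\mathcal{L}(x,y)$ has $d(p,[x,y])=R$ with nearest point $w$, take $a,b\in[x,y]$ at distance about $2R$ from $w$ and compare $\mathcal{L}(a,b)$ with \emph{two} paths from $a$ to $b$ of length $O(R)$ --- the geodesic subsegment and the concatenation of geodesics $[a,p]\cup[p,b]$. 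The logarithmic estimate (which must therefore be proved for arbitrary paths, not just geodesics) applied at scale $O(R)$, together with connectivity of $\mathcal{L}(a,b)$, yields $R\le O(\lambda\log R)$ and hence $R\le R_0(\lambda)$. This self-improvement at the scale of $R$ is the key idea of the proof and is absent from your sketch. For the reverse inclusion $[x,y]\subset N_C(\mathcal{L}(x,y))$: the mechanism you describe cannot produce a contradiction, since no path from $x$ to $y$ can be strictly shorter than $d(x,y)$, and moreover the length of a path inside the subgraph $\mathcal{L}(x,y)$ admits no a priori bound in terms of $\lambda$. The correct argument is softer: given the first inclusion, project each vertex of $\mathcal{L}(x,y)$ to a nearest point of $[x,y]$; adjacent vertices project $(2C+1)$-close, the endpoints project to themselves, so by a discrete intermediate-value argument along the connected graph $\mathcal{L}(x,y)$ every point of $[x,y]$ is within $3C+1$ of $\mathcal{L}(x,y)$. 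You should replace the ``shorter detour'' step with this projection argument and supply the bootstrap above.
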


\begin{remark}
The connected subgraph $\mathcal{L}(x,y)$ associated to $x,y\in \Gamma$ is called the `guessing geodesic' between $x$ and $y$, and is known to have uniformly bounded Hausdorff distance from a geodesic between $x$ and $y$ (bounded in terms of $\lambda$).
In fact, $\delta$ can be chosen as any number greater than or equal to $(3m-10\lambda)/2$, where $m$ is any positive real number satisfying 
$$
2\lambda\left(6+\log _{2}(m+2)\right) \leq m.
$$
\end{remark}
Hensel--Przytycki--Webb constructed a path via the `unicorn arcs' to show the uniform hyperbolicity of the arc graphs and curve graphs of compact oriented surfaces \cite{hensel20151}. 
Later, Przytycki--Sisto constructed a path via `bicorn curves' to show the uniform hyperbolicity of the curve graphs of closed oriented surfaces of genus at least $2$ \cite{przytycki2017note}.
The arguments used in both papers to prove uniform hyperbolicity are to show that the paths given by `unicorn arcs' or `bicorn curves' satisfy the conditions in Lemma \ref{Guessing Geodesic Lemma}.
In this paper, we will also make use of Lemma~\ref{Guessing Geodesic Lemma} for the graphs we introduce.

Now suppose a group $G$ acts on a hyperbolic space $(X,d)$ by isometries. For $g\in G$ and $x\in X$ we define 
$$
|g|:= \lim_{k\rightarrow \infty}\frac{1}{k}d(x,g^{k}x),
$$
to be the \emph{asymptotic translation length} of $g$. In fact, this limit exists and is independent of the choice of $x$. We actually have the following classifications of isometries \cite{gromov1987hyperbolic}.

\begin{definition}
Let $g$ be an isometry of $X$. We say that $g$ is 
\begin{itemize}
    \item an elliptic element if the orbit of $\{g^i\cdot x\}_{i\in \mathbb{Z}}$ is bounded for some (and hence, for every) $x\in X$;
    \item a parabolic element if $|g|=0$ and every orbit of $g$ is unbounded;
    \item a hyperbolic (or loxodromic) element if $|g|>0$.
\end{itemize}
\end{definition}
If $g$ is a hyperbolic (or loxodromic) element, then any orbit of $g$ is a $C$-quasi-axis, i.e. a $g$-invariant $C$-quasi-geodesic, for some $C$ depending on the orbit.
And $g$ acts on this quasi-axis by translation.
We typically write $A_g$ for an invariant quasi-axis of $g$.
The positive asymptotic translation length is indeed a characteristic of a hyperbolic (or loxodromic) element in the context of a group acting on a metric space.
This characteristic helps distinguish hyperbolic elements from elliptic and parabolic elements, each of which behaves differently under the group action. 
For details, we refer the reader to have a look at \cite{bridson2013metric} or \cite{gromov1987hyperbolic}.

\subsection{Quasi-isometry}
\begin{definition}\label{qi}
 Let $\left(X, d_{X}\right)$ and $\left(Y, d_{Y}\right)$ be metric spaces. A map $f\colon X \rightarrow Y$ is a \emph{$(K, C)$-quasi-isometric embedding} if there exists constants $K \geq 1$ and $C \geq 0$ so that for all $x, x^{\prime} \in X$ we have
$$
\frac{1}{K} d_{X}(x, x^{\prime})-C \leq d_{Y}(f(x), f(x^{\prime})) \leq K d_{X}(x, x^{\prime})+C .
$$
Furthermore, $f$ is called a \emph{quasi-isometry} if it is also \emph{coarsely surjective}, i.e., there exists $r \geq 0$ so that for all $y \in Y$ there exists $x \in X$ such that $d_{Y}(f(x),y)\leq r$. In this case, we say $X$ and $Y$ are quasi-isometric.
\end{definition}
We can give an alternative definition of a hyperbolic isometry. Let $G$ be a group that acts on a hyperbolic space $(X,d)$ by isometries. Then $g\in G$ is a hyperbolic isometry of $X$ if and only if $k\mapsto g^kx$ is a quasi-isometric embedding of $\mathbb{Z}$ into $X$, for every $x\in X$.

\subsection{Arc and curve graphs}
\label{subsection-arc and curve graphs}

Let $S=S_{g,p}$ be the connected, compact, oriented surface of genus $g$ with $p$ boundary components. We write $\partial S$ for the boundary of $S$. We define $\xi(S):= 3g+p-3$ and $\xi(S)$ is called the complexity of $S$.

\begin{definition}
Let $S=S_{g,p}$ be a surface with boundary and let $\xi(S)\geq 1$. A simple closed curve in $S$ is \emph{essential} if it is not null-homotopic (equivalently, does not bound a disk in $S$) and \emph{non-peripheral} if it is not homotopic into $\partial S$. A \emph{curve} of $S$ is a homotopy class of essential, non-peripheral, simple closed curve of $S$. An \emph{arc} of $S$ is an ambient isotopy class of a properly embedded interval in $S$ which does not cut off a disk of $S$ (equivalently, it is the homotopy class through such properly embedded intervals). The \emph{arc and curve graph} for $S=S_{g,p}$, denoted by $\mathcal{AC}(S)$, is the graph whose vertex set $V(\mathcal{AC}(S))$ is the set of arcs and curves of $S$. Two vertices are joined by an edge if and only if the corresponding classes admit disjoint representatives. The \emph{arc graph} $\mathcal{A}(S)$ is the induced subgraph of $\mathcal{AC}(S)$  on the vertices that are arcs of $S$, and the \emph{curve graph} $\mathcal{C}(S)$ is the induced subgraph of $\mathcal{AC}(S)$  on the vertices that are curves.
\end{definition}

\begin{remark}
For $S_{0,4}$, we can also define its curve graph after a slight modification. Vertices are still the isotopy classes of essential, non-peripheral curves in $S_{0,4}$. However, two vertices are joined by an edge if and only if the corresponding curves have geometric intersection number equal to $2$. The graph $\mathcal{C}(S_{0,4})$ is isomorphic to the Farey graph. It is known that the Farey graph is $1$-hyperbolic, see for example~\cite{minsky1996geometric,leasure2002geodesics}.
\end{remark}
For a surface $S$ with $\xi(S)\geq 2$, the curve graph $\mathcal{C}(S)$ is proved to be connected, of infinite diameter and hyperbolic by Masur and Minsky~\cite{Masur1999}. Later, the uniform hyperbolicity of the curve graph $\mathcal{C}(S)$ was proved independently by several groups of people,~\cite{aougab2013uniform,bowditch2014uniform,clay2015uniform,hensel20151,przytycki2017note}. Here, uniform hyperbolicity means that the hyperbolicity constant $\delta$ can be chosen to be independent of the topological structure of the surface, i.e. independent of genus and number of boundary components. 
The uniform hyperbolicity of 
the arc graphs $\mathcal{A}(S)$ was proved by Hensel--Przytycki--Webb~\cite{hensel20151}.

The mapping class group of $S$, denoted by $\mathrm{MCG}(S)$, is defined to be the group of orientation-preserving homeomorphisms of $S$ modulo isotopy. Observe that $\mathrm{MCG}(S)$ has a natural isometric action on the curve graph $\mathcal{C}(S)$. The following was proved by Masur--Minsky~\cite{Masur1999}.

\begin{proposition}\label{pseudoAnosove mapping classes are loxodromic elements}
The mapping class group $\mathrm{MCG}(S)$ acts on $\mathcal{C}(S)$ by isometries, and $g\in \mathrm{MCG}(S)$ is a hyperbolic (or loxodromic) element if and only if $g$ is pseudo-Anosov.
\end{proposition}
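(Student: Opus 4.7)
My plan is to prove the two assertions of the proposition in sequence. The statement that $\mathrm{MCG}(S)$ acts on $\mathcal{C}(S)$ by isometries is essentially immediate from the definitions: a mapping class $f$ is represented by an orientation-preserving homeomorphism which sends essential curves to essential curves and preserves disjointness up to isotopy, so $f$ induces a graph automorphism of $\mathcal{C}(S)$, which is automatically an isometry in the path metric.

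For the equivalence, I would use the Nielsen--Thurston trichotomy, splitting $g\in\mathrm{MCG}(S)$ into three cases: periodic, reducible (non-periodic), or pseudo-Anosov. If $g$ is periodic of order $n$, then any orbit $\{g^i\alpha\}_{i\in\mathbb{Z}}$ has at most $n$ elements and is therefore bounded, so $g$ is elliptic. If $g$ is reducible, then by definition $g$ fixes (the isotopy class of) some multicurve $\mu$, hence fixes any component of $\mu$ as a vertex of $\mathcal{C}(S)$, and is again elliptic. This proves the easy direction: loxodromic implies pseudo-Anosov.

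The nontrivial direction is that a pseudo-Anosov $g$ acts loxodromically. My approach is via Teichmüller theory. Let $\tau_g \subset \mathrm{Teich}(S)$ denote the Teichmüller geodesic axis of $g$, on which $g$ acts by translation by a positive amount $L$. Bers's theorem yields a coarsely well-defined, $\mathrm{MCG}(S)$-equivariant projection $\pi\colon \mathrm{Teich}(S)\to \mathcal{C}(S)$ sending each point to a simple closed curve of uniformly bounded hyperbolic length. The heart of the argument is to show that the image $\pi(\tau_g)$ of the axis is a parametrised quasi-geodesic in $\mathcal{C}(S)$; once this is known, equivariance implies that $d_{\mathcal{C}(S)}(\pi(X), g^k \pi(X))$ grows linearly in $k$, and hence $|g|>0$ as required.

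The hardest step will be establishing this quasi-geodesic property of $\pi(\tau_g)$. I expect to argue by contradiction: if two points $X, Y$ of $\tau_g$ that are far apart in Teichmüller distance had Bers curves $\pi(X)$ and $\pi(Y)$ close in $\mathcal{C}(S)$, one could try to push a short curve-graph path between them back along the axis to contradict the exponential stretching and contracting of the two transverse measured foliations preserved by the pseudo-Anosov $g$. Making this precise is the substantive content of the Masur--Minsky argument, and in practice it would require either subsurface projection estimates or a careful thick--thin analysis along $\tau_g$; this is where I expect essentially all of the technical difficulty to lie.
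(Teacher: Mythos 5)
The paper does not actually prove this proposition: it is quoted verbatim as a known result of Masur--Minsky \cite{Masur1999}, so there is no internal argument to compare yours against. Judged on its own terms, your outline correctly identifies the standard architecture. The isometric action is indeed immediate, and the ``loxodromic $\Rightarrow$ pseudo-Anosov'' direction via the Nielsen--Thurston trichotomy is essentially right, with one small slip: a reducible mapping class need not fix a component of its reducing multicurve $\mu$, only permute the components; the fix is that the $g$-orbit of any such component lies in the (finite, pairwise disjoint, hence diameter-$\le 1$) set of components of $\mu$, so the orbit is bounded and $g$ is elliptic. You should also note that the proposition is being invoked for $S_{0,n}$, including the Farey-graph case $S_{0,4}$, where the edge relation is intersection number $2$ rather than disjointness; the statement still holds there but requires a separate (easier) argument.

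The genuine gap is the converse direction, which you openly defer: showing that a pseudo-Anosov has positive asymptotic translation length, equivalently that $d_{\mathcal{C}(S)}(\alpha, g^n\alpha)\ge c\,n$ for some $c>0$. That inequality \emph{is} the theorem; everything else in your write-up is routine. Your proposed route (Bers projection of the Teichm\"uller axis is a reparametrised quasi-geodesic, then equivariance) is a legitimate strategy, but as you acknowledge, the quasi-geodesic property is exactly the hard content and you give only a heuristic for it. For the record, Masur--Minsky's own proof of this particular statement (their Proposition 4.6) does not go through the Teichm\"uller-axis projection; it uses the stable lamination of $g$ together with a nested train-track neighbourhood argument in $\mathcal{PML}(S)$ to certify that $d(\alpha,g^n\alpha)$ grows linearly. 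Either way, a complete proof requires supplying that quantitative lower bound, which your proposal does not do.
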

\begin{remark}
For the definition and properties of pseudo-Anosov mapping classes, we refer the reader to \cite{farb2011primer}.
\end{remark}

\subsection{Intersections and dual trees}

Suppose that $\alpha$ and $\beta$ are simple closed curves on a surface. We will write $\alpha \pitchfork \beta$ to indicate that $\alpha$ and $\beta$ are disjoint, or intersect transversely.

\begin{definition}\label{dual tree}
Assume $\alpha$ and $\beta$ are simple closed curves on $S^2$, $\alpha\cap\beta\neq\emptyset$, and $\alpha \pitchfork \beta$. Let $\alpha'\subset\alpha$ and $\beta'\subseteq \beta$ be connected subarcs of $\alpha$ and $\beta$ respectively (and also allow the case $\beta'=\beta$). We define the dual tree for the pair $(\alpha',\beta')$, denoted by $T=T_{\alpha',\beta'}$, as follows: the vertices are all connected components of the complements of $\alpha'\cup \beta'$. And two distinct vertices are joined by an edge if the corresponding components share a connected component of $\beta'\backslash \alpha'$ in common along their boundaries. It is well known that this defines a tree because $S^2\setminus \alpha'$ is an open disk.

\end{definition}
\begin{figure}[H]
    \centering
    \includegraphics[width=0.8\textwidth]{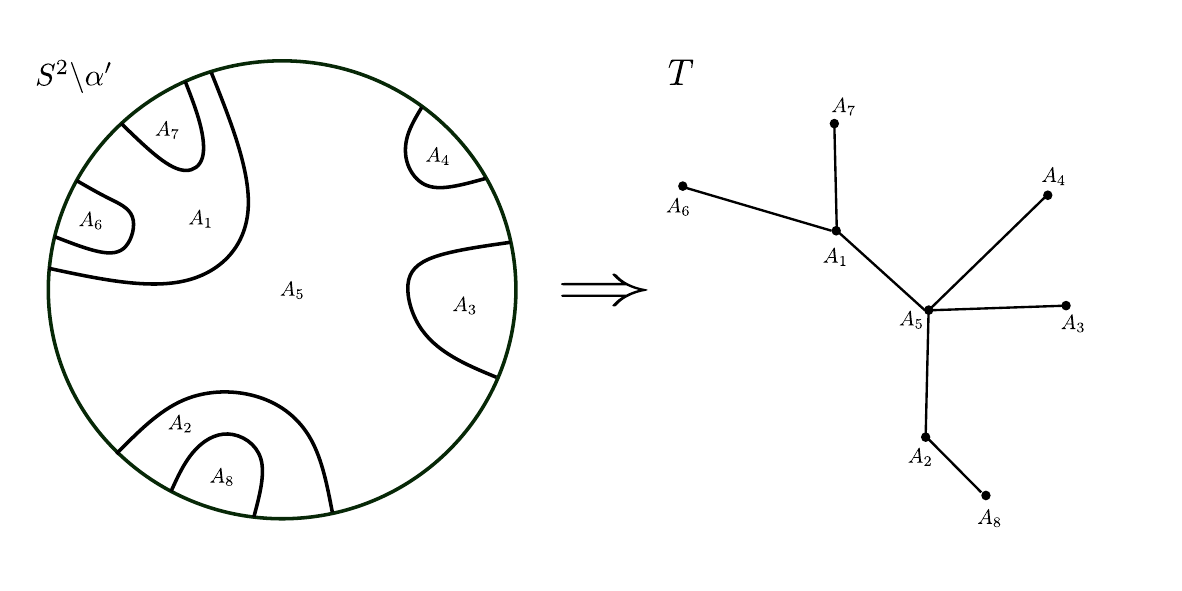}
    \caption{The dual tree}
    \label{fig: Dual Tree}
\end{figure}

\begin{example}
Figure \ref{fig: Dual Tree} provides an example of how we construct the dual tree for given $\alpha'$ and $\beta'$. The left part of Figure \ref{fig: Dual Tree} is a surface which is homeomorphic to $S^2\backslash \alpha'$. The arcs inside the disk are the components of $\beta'\backslash \alpha'$.
\end{example}

\begin{remark}
    When $\alpha'=\alpha$, we can still define a dual tree $T=T_{\alpha',\beta'}$ but there is one dual tree for each (of the two) connected components of $S^2\setminus \alpha'$ (again homeomorphic to an open disk). The definition of the dual tree is the same as above once the component is understood.
\end{remark}
\section{The balanced curve graph}\label{section-New graph}

\subsection{Definition and theorem}
Let $(S^{2},\omega)$ be the two-sphere equipped with an area form $\omega$ of total area equal to one. Fix a constant $\varepsilon$ such that $0<\varepsilon\leq \frac{1}{2}$.

\begin{definition}\label{The definition of the balanced curve graph}
Let $\alpha \subset S^{2}$ be a smooth simple closed curve. We say that $\alpha$ is \emph{$\varepsilon$-balanced} if the areas of both connected components of $S^2\setminus \alpha$ are greater than or equal to $\varepsilon$. The \emph{$\varepsilon$-balanced curve graph}, denoted by $\mathcal{C}^{\dagger}_{\varepsilon}(S^{2})$, is the graph whose vertex set is the set of all $\varepsilon$-balanced curves in $S^{2}$, where two vertices are joined by an edge if and only if their corresponding $\varepsilon$-balanced curves are either disjoint or intersect twice transversely.
\end{definition}

We will prove the following.

\begin{theorem}\label{infinite diameter}
For any $0<\varepsilon\leq \frac{1}{2}$, the graph $\mathcal{C}^{\dagger}_{\varepsilon}(S^{2})$ is connected, has infinite diameter, and is hyperbolic.
\end{theorem}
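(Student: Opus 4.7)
The plan is to handle the three conclusions of the theorem separately: connectivity, infinite diameter, and hyperbolicity. Given the roadmap the authors outline in the introduction, infinite diameter is the quickest consequence (it falls out of the existence of hyperbolic isometries from Lemma~\ref{hyperbolics}), while hyperbolicity will be the main technical battle, to be fought via the Guessing Geodesics Lemma (Lemma~\ref{Guessing Geodesic Lemma}).

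For connectivity, I would proceed by induction on $|\alpha\cap\beta|$ for two $\varepsilon$-balanced curves $\alpha,\beta$ that are assumed, after a small isotopy, to intersect transversely. The base case is $|\alpha\cap\beta|\le 2$, where by definition $\alpha$ and $\beta$ are adjacent in $\mathcal{C}^{\dagger}_{\varepsilon}(S^2)$. For the inductive step the natural idea is \emph{bigon surgery}: if $\alpha$ and $\beta$ bound an innermost bigon, cut $\alpha$ along a subarc of $\beta$ and push off to obtain a curve $\gamma$ with strictly smaller intersection with both $\alpha$ and $\beta$. The catch is that surgery produces two candidate curves, corresponding to the two components of $S^2\setminus(\alpha\cup\beta)$ adjacent to the bigon, and we must choose one that is still $\varepsilon$-balanced. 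When $\varepsilon<\frac{1}{2}$ there is genuine slack and at least one of the two candidates inherits the balanced property by a pigeonhole on areas. When $\varepsilon=\frac{1}{2}$ both $\alpha$ and $\beta$ are equators, so $|\alpha\cap\beta|$ is even and positive; in this case I would instead select a pair of adjacent intersection points along $\beta$ and do a two-point surgery, choosing which of the two surgered curves lies on the equator side by a continuity/intermediate-value argument on the area function.

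For infinite diameter, I would invoke Lemma~\ref{hyperbolics}: it produces a hyperbolic isometry $g\in\Ham(S^2)$ of $\mathcal{C}^{\dagger}_{\varepsilon}(S^2)$, so $|g|>0$, and the orbit $\{g^k\alpha\}_k$ of any $\varepsilon$-balanced curve $\alpha$ is a quasi-geodesic whose diameter is infinite. This also shows the graph is nonempty, since $\varepsilon$-balanced curves exist for every $\varepsilon\in(0,\frac12]$ (the standard equator is one).

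The main work is hyperbolicity, which I would establish via Lemma~\ref{Guessing Geodesic Lemma}. The candidate family of guessing geodesics $\mathcal{L}(\alpha,\beta)$ should be built from a surgery construction modeled on the unicorn/bicorn paths of Hensel--Przytycki--Webb~\cite{hensel20151} and Przytycki--Sisto~\cite{przytycki2017note}: put $\alpha$ and $\beta$ in minimal position, choose subarcs $\alpha'\subseteq\alpha$, $\beta'\subseteq\beta$ with matched endpoints on $\alpha\cap\beta$, and form the simple closed curve $\alpha'\cup\beta'$. The dual tree machinery of Definition~\ref{dual tree} is the natural tool to pick the subarcs, since each choice of vertex in $T_{\alpha,\beta}$ selects a closed region of $S^2$ whose area is controlled, and we can filter the resulting curves by the $\varepsilon$-balanced condition. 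Verifying the two axioms of Lemma~\ref{Guessing Geodesic Lemma} then splits into showing that (i) the surgery curves at distance one in the graph have uniformly bounded area-imbalance, and (ii) the $\mathcal{L}$-path behaves well under insertion of a third curve $\gamma$, by comparing dual trees $T_{\alpha,\gamma}$, $T_{\gamma,\beta}$ to $T_{\alpha,\beta}$.

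The real obstacle I expect is that the balanced constraint may veto the unicorn/bicorn surgery: the surgered simple closed curve $\alpha'\cup\beta'$ is not a priori $\varepsilon$-balanced, since one of the two disks it bounds can have area less than $\varepsilon$. To handle this I would introduce a two-parameter family of surgeries (varying both endpoints along $\alpha\cap\beta$) and appeal to a discrete intermediate-value argument on the area function over the dual tree to extract enough balanced representatives to form a connected subgraph, then verify the slimness axiom by checking that replacing $\beta$ by $\gamma$ only perturbs the area-balance by a uniformly controlled amount, so the vetoed portions of the path can be bypassed by a bounded detour. This is where the technical bulk of Section~\ref{section-Hyperbolicity of the balanced curve graph} presumably lies.
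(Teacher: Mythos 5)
Your overall architecture matches the paper's: infinite diameter is indeed an immediate consequence of Lemma~\ref{hyperbolics}, and hyperbolicity is indeed proved via Lemma~\ref{Guessing Geodesic Lemma} with paths built from subarcs of $\alpha$ and $\beta$ and organised by dual trees. However, both your connectivity and your hyperbolicity arguments rest on using the literal surgered curves (bigon surgeries, unicorn/bicorn curves $\alpha'\cup\beta'$), and this is exactly where the argument breaks. Your pigeonhole claim for $\varepsilon<\frac12$ is false: take $\varepsilon=0.4$ and two equators $\alpha,\beta$ bounding disks $D_1,E_1$ of area $0.5$ whose intersection contains a complementary bigon $B$ of area $0.2$; then the two resolutions of $B$ produce curves bounding $D_1\setminus B$ and $E_1\setminus B$ respectively, each of area $0.3<\varepsilon$, so neither candidate is balanced. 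For $\varepsilon=\frac12$ the situation is worse: the family of two-point surgeries along $\alpha\cap\beta$ is finite, so there is no continuous parameter on which to run an intermediate-value argument, the area enclosed can jump by the area of an entire complementary region from one surgery to the next, and typically neither surgered curve is an equator, so ``choosing the equator side'' is not available.

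The missing idea is the paper's replacement of a single surgered curve by a \emph{set} of curves: for subarcs $\alpha'\subseteq\alpha$, $\beta'\subseteq\beta$, one takes $\pi_{\alpha'}(\beta')$, the set of \emph{all} equators disjoint from $\alpha'$ that meet each component of $\beta'\setminus\alpha'$ at most twice (Definition~\ref{projection}). Because such curves are free to wander inside each complementary region of $\alpha'\cup\beta'$, their enclosed area can be tuned continuously --- this is where your intermediate-value idea actually lives --- and the relevant dichotomy is \emph{admissibility}: $\pi_{\alpha'}(\beta')$ is nonempty and of diameter at most $8$ precisely when no complementary region of $\alpha'\cup\beta'$ has area exceeding $1-\varepsilon$ (Lemma~\ref{projection gives you uniformly bounded set}), and has infinite diameter otherwise. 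Connectivity then comes from shrinking $\alpha'$ one intersection point at a time (Lemmas~\ref{projection of largest subarc} and~\ref{shrink beta to get a quasi path}), not from bigon surgery on the closed curves, and the guessing geodesic $\mathcal{L}_{\pitchfork}(\alpha,\beta)$ is the union of these projection sets over all admissible pairs, with slimness verified via minimal pairs and Lemma~\ref{symmetric}. Without this device your candidate guessing geodesics need not contain any balanced curve between consecutive surgeries, so neither axiom of Lemma~\ref{Guessing Geodesic Lemma} can be verified as you propose.
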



For now, we will take for granted that the above graphs are connected, and defer the proof of connectivity until Section~\ref{Subsection-connectivity}. We will prove the infinite diameter statement in Lemma~\ref{hyperbolics} in Section~\ref{subsection-witnesses and subsurface projections}, and prove that the graphs are hyperbolic in Section~\ref{Subsection-hyperbolicity}.

\begin{remark}
An alternative definition of $\mathcal{C}^{\dagger}_{\varepsilon}(S^{2})$ is given by the same set of vertices but with edges only between pairs of disjoint curves. When $\varepsilon=\frac{1}{2}$, the alternative graph is not connected. When $0 < \varepsilon < \frac{1}{2}$, the $\varepsilon$-balanced curve graph and the alternative definition are quasi-isometric. In Section~\ref{Subsection-hyperbolicity} we show that there is a choice of hyperbolicity constant that works for all of the $\varepsilon$-balanced curve graphs simultaneously. We do not believe this is the case for the alternative definition for values of $\varepsilon$ tending to $\frac{1}{2}$.\end{remark}

\subsection{Witnesses and subsurface projections}

\label{subsection-witnesses and subsurface projections}

To prove the infinite diameter statement in Theorem~\ref{infinite diameter}, we use the concept of \emph{witnesses}, which were introduced as \emph{holes} in Masur and Schleimer's paper \cite{masur2013geometry}. The idea is that any vertex of our graph must cut a fixed subsurface $W$, \emph{the witness}, in an essential way, which enables one to define a coarsely Lipschitz projection to the (arc and) curve graph of $W$. Our setup is different, so we define and characterize our witnesses here.

\begin{definition}\label{witness}
A (proper) \emph{subsurface} $W\subset S^2$ is a connected $2$-manifold with boundary, embedded in $S^2$. We say a simple closed curve $\alpha\subset S^2$ \emph{cuts} a subsurface $W\subset S^2$ if $\alpha\cap W$ is an essential, non-peripheral curve in $W$, or, contains an essential properly embedded interval in $W$, or both. We say that a subsurface $W\subset S^{2}$ is a \emph{witness} for $\mathcal{C}^{\dagger}_{\varepsilon}(S^{2})$ if every vertex of $\mathcal{C}^{\dagger}_{\varepsilon}(S^{2})$ cuts $W$.
\end{definition}

Let us now consider subsurfaces $W\subset S^2$ by removing $n\geq 4$ open disks from $S^2$ whose closures are pairwise disjoint. These disks are denoted by $A_{1}$, $A_{2}$, \dots, $A_{n}$ respectively. Now we're going to give a sufficient condition for $W$ to be a witness for $\mathcal{C}^{\dagger}_{\varepsilon}(S^{2})$.

\begin{lemma}\label{Sufficient condition for W to be a witness}
If for all $i$ with $1\leq i \leq n$, we have $\mathrm{area}(A_{i})+\mathrm{area}(W)<\varepsilon$, then $W$ is a witness for $\mathcal{C}^{\dagger}_{\varepsilon}(S^{2})$. 

\end{lemma}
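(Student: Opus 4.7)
The plan is to argue the contrapositive: assuming $\alpha$ is a simple closed curve that does not cut $W$, I will show that one of the two complementary open disks of $S^2\setminus\alpha$ has area strictly less than $\varepsilon$, so that $\alpha$ cannot be $\varepsilon$-balanced. After a small isotopy I may assume $\alpha$ is transverse to $\partial W$, so $\alpha\cap W$ decomposes into finitely many disjoint properly embedded arcs together with, in the degenerate case $\alpha\subset W$, a single closed curve. The hypothesis that $\alpha$ does not cut $W$ then forces every arc of $\alpha\cap W$ to cobound a disk in $W$ with a subarc of $\partial W$; and if $\alpha\subset W$, then $\alpha$ either bounds a disk in $W$ or cobounds an annulus in $W$ with some $\partial A_i$.

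The degenerate subcases, where $\alpha\cap\partial W=\emptyset$, are handled directly. If $\alpha\subset A_i$, then $\alpha$ bounds a disk inside $A_i$ of area at most $\mathrm{area}(A_i)<\varepsilon$. If $\alpha\subset W$ is inessential, it bounds a disk in $W$ of area at most $\mathrm{area}(W)<\varepsilon$. If $\alpha\subset W$ is peripheral to some $\partial A_i$, the annulus $B\subset W$ it cobounds with $\partial A_i$ gives a complementary disk $A_i\cup B$ of area at most $\mathrm{area}(A_i)+\mathrm{area}(W)<\varepsilon$. In each subcase one complementary disk of $\alpha$ violates $\varepsilon$-balance.

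The main case is $\alpha\cap\partial W\neq\emptyset$, where $\alpha\cap W$ consists only of boundary-parallel arcs. The key combinatorial observation is that every such arc has both endpoints on the \emph{same} boundary component $\partial A_{i_0}$ of $W$. Indeed, after an arc of $\alpha\cap W$ with endpoints on $\partial A_{i_0}$, the curve $\alpha$ must enter $A_{i_0}$, eventually exit again onto $\partial A_{i_0}$, and encounter another arc of $\alpha\cap W$; since that arc is also boundary-parallel, both of its endpoints are on the same boundary component, namely $\partial A_{i_0}$. Iterating cyclically around $\alpha$ shows that no other $\partial A_j$ is ever visited. Consequently $\alpha\subseteq A_{i_0}\cup N$, where $N\subseteq W$ is the union of the disks cobounded by the arcs of $\alpha\cap W$ with their partner subarcs of $\partial A_{i_0}$.

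To conclude, I let $U_2$ be the complementary disk of $\alpha$ in $S^2$ that contains the unique "big" connected component of $W\setminus(\alpha\cap W)$, and $U_1$ the other side. By construction $U_1\cap W\subseteq N$. Moreover, for $j\neq i_0$, the disk $A_j$ is disjoint from $\alpha$, and a collar of $\partial A_j$ in $W$ lies in the big component of $W\setminus(\alpha\cap W)$, hence in $U_2$; so $A_j\subseteq U_2$, giving $U_1\cap A_j=\emptyset$. Therefore $U_1\subseteq A_{i_0}\cup N$, whence
$$
\mathrm{area}(U_1)\leq\mathrm{area}(A_{i_0})+\mathrm{area}(N)\leq\mathrm{area}(A_{i_0})+\mathrm{area}(W)<\varepsilon,
$$
contradicting $\varepsilon$-balance. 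The main obstacle is the combinatorial tracing step of the third paragraph; although intuitively clear, care is needed to rule out the possibility that $\alpha$ visits a different $\partial A_j$ via some long arc running through $A_{i_0}$, a scenario excluded precisely because every re-entry of $\alpha$ into $W$ occurs along $\partial A_{i_0}$ and every next arc in $W$ is boundary-parallel.
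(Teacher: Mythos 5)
Your proof is correct and follows essentially the same route as the paper's: a case analysis on how $\alpha$ meets $W$, with the key step in both being that a curve meeting $W$ only in inessential pieces can visit at most one disk $A_{i_0}$, so that one complementary side of $\alpha$ is contained in $A_{i_0}$ together with a portion of $W$ and hence has area less than $\varepsilon$. The only cosmetic difference is that you reduce to the transverse case by a small perturbation at the outset, whereas the paper works with possibly non-transverse intersections and invokes $C^0$-openness inside the argument.
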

\begin{proof}
Let $\alpha$ be an arbitrary $\varepsilon$-balanced curve. Let's assume for a contradiction that $\alpha$ does not cut $W$. This means that either $\alpha$ is disjoint from $W$, or, it intersects $W$ but only with inessential or peripheral components.

If $\alpha$ is disjoint from $W$ then it must be contained within one of the $A_i$. But then one complementary region of $\alpha$ has area less than the area of $A_i$, which in turn is less than $\varepsilon$, so $\alpha$ is clearly not $\varepsilon$-balanced, a contradiction.

If $\alpha$ intersects $W$ but with only inessential or peripheral components, then there are two cases to consider.

The first is that $\alpha\cap W$ is a simple closed curve, and therefore is contained within $W$ but is peripheral in $W$. In this case, one of the complementary regions of $\alpha$ contains at most one of the $A_i$, but then this complementary region has area at most $\mathrm{area}(A_{i})+\mathrm{area}(W)<\varepsilon$, contradicting $\alpha$ is $\varepsilon$-balanced.

The second and final case is that $\alpha\cap W$ is a non-empty union of (possibly infinitely many) connected components. We suppose that each component of $\alpha\cap W$ that intersects the interior of $W$, i.e. an interval in $W$ with endpoints in $\partial W$, contains no essential properly embedded interval in $W$. In this case, we will show that $\alpha$ cannot be $\varepsilon$-balanced. Let's assume without loss of generality that $\alpha$ intersects the open disk $A_1$, and therefore $\alpha$ is disjoint from $A_2,\dots,A_n$ 
(otherwise we can find an essential properly embedded interval in $\alpha\cap W$). We then observe $A_2, \dots, A_n$ lie in the same complementary component of $\overline{A_1} \cup \alpha$: When $\alpha\cap W$ has finitely many components, then this is immediate, however, there may be infinitely many components. Fortunately, we may take arbitrarily small $C^0$ perturbations of $\alpha$, which intersect $\partial W$ transversely and therefore finitely many times, and deduce the same for this family of curves. Since the property of having (or not having) $A_2, \dots, A_n$ lying in the same complementary component of $\overline{A_1} \cup \alpha$ is a $C^0$-open condition on $\alpha$, we deduce that the property holds for $\alpha$.

Finally, since $A_2, \dots, A_n$ lie in the same complementary component of $\alpha$, it must be the case that this complementary component of $\alpha$ has area greater than $1-\varepsilon$, and so $\alpha$ is not $\varepsilon$-balanced.
\end{proof}

It is not difficult to find examples that satisfy the hypotheses of Lemma~\ref{Sufficient condition for W to be a witness}.

\begin{example}\label{Witness example}
Given any $\varepsilon$ with $0<\varepsilon\leq \frac{1}{2}$, let $n$ be an integer such that $n\geq 4$.
Let each of $A_1,\dots ,A_n$ have area equal to $\frac{1}{n+1}$. Then the complement $W$ of the disks is homeomorphic to $S_{0,n}$, satisfying $\mathrm{area}(W)=\frac{1}{n+1}$. Therefore, we have $\mathrm{area}(A_{i})+\mathrm{area}(W)=\frac{2}{n+1}$, which is less than $\varepsilon$ for sufficiently large $n$.
\end{example}

Let $W$ be a witness as described in Lemma~\ref{Sufficient condition for W to be a witness}. In what follows, we write $\mathcal{P}(\mathcal{C}(W))$ for the set of subsets of vertices of $\mathcal{C}(W)$. Then we define the subsurface projection $\pi_{W}$ from $\mathcal{C}^{\dagger}_{\varepsilon}(S^{2})$ to $\mathcal{P}(\mathcal{C}(W))$, by following Masur-Minsky \cite{masur2000geometry}. Given an $\varepsilon$-balanced curve $\alpha$ in $S^2$, since $W$ is a witness, either $\alpha$ represents a curve of $W$ (so set $\pi_W(\alpha)$ to be the set whose element is the isotopy class in $W$ of this curve) or failing this, take the set of (isotopy classes in $W$ of) essential properly embedded intervals of $\alpha \cap W$ in $W$. These are clearly pairwise disjoint and so define a clique of $\mathcal{A}(W)$. Following Masur--Minsky, we may surger any of these arcs to obtain a curve of $W$, and we write $\pi_W(\alpha)$ to be the set of such curves.
It is shown in \cite{masur2000geometry} that $\pi_W(\alpha)$ has diameter at most $2$ in $\mathcal{C}(W)$. Moreover, it is a straightforward consequence of \cite{masur2000geometry} that this defines a coarsely Lipschitz map. This is summarised in the following lemma:

\begin{lemma}\label{coarsely lipschitz}
The subsurface projection $\pi_{W}\colon \mathcal{C}^{\dagger}_{\varepsilon}(S^{2})\rightarrow \mathcal{P}(\mathcal{C}(W))$ is a coarsely Lipschitz map in the following sense. Namely, for any $\alpha,\beta \in \mathcal{C}^{\dagger}_{\varepsilon}(S^{2})$  we have the following:
\begin{enumerate}
    \item $\mathrm{diam}(\pi_W(\alpha))\leq 2$, and
    \item whenever $d_{\mathcal{C}^{\dagger}_{\varepsilon}(S^{2})}(\alpha,\beta)=1$ then $\mathrm{diam}_{\mathcal{C}(W)}(\pi_W(\alpha)\cup\pi_W(\beta))\leq 4$.
\end{enumerate}
\end{lemma}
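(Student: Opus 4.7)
Both parts follow the surgery framework of Masur–Minsky \cite{masur2000geometry}. The only genuinely new case compared with their setting is that our adjacency relation in $\mathcal{C}^{\dagger}_{\varepsilon}(S^{2})$ permits pairs of curves intersecting twice transversely, in addition to disjoint pairs.

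For the first bullet, I separate two subcases. If $\alpha$ is already (isotopic into) a curve of $W$, then $\pi_W(\alpha)$ is a singleton and the bound is trivial. Otherwise, $\pi_W(\alpha)$ is obtained by the standard surgery: for each essential properly embedded arc $a$ of $\alpha\cap W$, we attach a neighbourhood of $\partial W$ at the endpoints of $a$ to produce a curve (or two curves) of $W$. The two surgery choices at a given arc differ by a boundary-parallel arc and therefore yield curves at distance $\leq 1$ in $\mathcal{C}(W)$. For two distinct arcs $a,a'$ of $\alpha \cap W$, the arcs are disjoint (being subarcs of the simple curve $\alpha$), so the surgery outputs can be realised disjointly in $W$, again giving distance $\leq 1$. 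Combining these observations through the triangle inequality yields $\mathrm{diam}_{\mathcal{C}(W)}(\pi_W(\alpha))\leq 2$.

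For the second bullet, I split according to the definition of edges in $\mathcal{C}^{\dagger}_{\varepsilon}(S^{2})$.

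In the disjoint case $\alpha\cap\beta=\emptyset$, the arc/curve systems $\alpha\cap W$ and $\beta\cap W$ are disjoint inside $W$. Applying the surgery analysis from the first bullet to an arc of $\alpha\cap W$ and an arc of $\beta\cap W$ simultaneously, I exhibit $a \in \pi_W(\alpha)$ and $b \in \pi_W(\beta)$ with disjoint representatives in $W$, so $d_{\mathcal{C}(W)}(a,b)\leq 1$. Using the diameter bound from the first bullet on each of $\pi_W(\alpha)$ and $\pi_W(\beta)$, the triangle inequality then gives $\mathrm{diam}_{\mathcal{C}(W)}(\pi_W(\alpha)\cup\pi_W(\beta))\leq 2+1+\text{(a small correction)}$, which I arrange to be $\leq 4$.

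In the twice-intersecting case $|\alpha\cap\beta|=2$, the arc/curve systems $\alpha\cap W$ and $\beta\cap W$ meet in at most two points of $W$. The plan is to resolve these intersections by smoothing inside $W$: at each intersection point of an arc $a\subset\alpha\cap W$ with an arc $b\subset\beta\cap W$, one of the two possible resolutions produces a concatenated arc whose interior avoids the other system except possibly at the other intersection point; iterating at the (at most one) remaining intersection produces an essential arc $c$ of $W$ satisfying $c\cap(\alpha\cap W)=c\cap(\beta\cap W)=\emptyset$. The surgery of $c$ along $\partial W$ then yields a curve $\gamma$ of $W$, playing the role of an intermediate vertex between $\pi_W(\alpha)$ and $\pi_W(\beta)$; the disjoint-case argument applied to each half of the journey and a careful bookkeeping of the diameter-$2$ bound for each projection give the global bound $\leq 4$.

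The main obstacle is the last case, specifically ensuring that the smoothed arc $c$ is indeed essential and properly embedded in $W$. The possible failures are that a resolution produces a boundary-parallel arc in $W$ or an inessential closed loop; in either case, one needs to verify that the smoothing can be chosen so that the resulting $\gamma$ is still a vertex of $\mathcal{C}(W)$, for example by choosing the resolution at each intersection point so that the resulting arc contains a nontrivial portion of either $\alpha\cap W$ or $\beta\cap W$ (both of which are essential and non-peripheral in $W$ since $W$ is a witness). Once this is verified, the bound $4$ is obtained by tracking constants through the triangle inequality applied to the intermediate vertex $\gamma$.
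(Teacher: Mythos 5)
Your proposal follows essentially the same route as the paper: part (1) is the Masur--Minsky surgery argument, and part (2) splits according to whether the adjacent curves are disjoint or meet twice, with the twice-intersecting case handled by producing an intermediate essential arc of $W$ disjoint from both systems (which is exactly the assertion that the relevant arcs have distance at most $2$ in $\mathcal{A}(W)$, the form in which the paper states it). The essentialness worry you raise about the smoothed arc $c$ is real but standard, and your proposed fix is fine.

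One concrete soft spot: in the disjoint case your bookkeeping does not actually close. Exhibiting $a\in\pi_W(\alpha)$ and $b\in\pi_W(\beta)$ with $d_{\mathcal{C}(W)}(a,b)\leq 1$ and then applying the triangle inequality with the diameter-$2$ bound on each projection gives $2+1+2=5$ for an arbitrary pair $x\in\pi_W(\alpha)$, $y\in\pi_W(\beta)$, not the claimed $4$; the phrase ``a small correction'' is doing unexplained work. The correct (and stronger) statement is that when $\alpha\cap\beta=\emptyset$ the union $(\alpha\cup\beta)\cap W$ is a single disjoint system of essential arcs and curves in $W$, and the Masur--Minsky diameter bound applies to the surgeries of the whole system at once, yielding $\mathrm{diam}_{\mathcal{C}(W)}(\pi_W(\alpha)\cup\pi_W(\beta))\leq 2$ directly. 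The same observation is what makes your intermediate-vertex argument in the twice-intersecting case deliver $2+2=4$: since $\gamma$ is disjoint from each full system, $\{\gamma\}\cup\pi_W(\alpha)$ and $\{\gamma\}\cup\pi_W(\beta)$ each have diameter at most $2$. With that replacement the argument is complete and matches the paper's.
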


\begin{proof}
For the proof of $1$, we note that either $\alpha\cap W$ is already a curve, in which case the statement is immediate, or, $\alpha\cap W$ is not a curve but since $W$ is a witness, $\alpha\cap W$ contains essential properly embedded intervals of $W$. These are pairwise disjoint, and so the arguments in \cite[Section~2.3]{masur2000geometry} apply, to give the result.

For the proof of $2$, let $\alpha,\beta\in \mathcal{C}^{\dagger}_{\varepsilon}(S^2)$ with $d_{\mathcal{C}^{\dagger}_{\varepsilon}(S^{2})}(\alpha,\beta)=1$. Then by definition we know that $\alpha$ and $\beta$ intersect transversely (if at all) and the number of intersection points $\alpha\cap\beta$ is at most two.
If $\alpha$ and $\beta$ are already representatives of curves of $W$, then it is straightforward to show that the curves have distance at most $2$ in $\mathcal{C}(W)$.
If $\alpha$ and $\beta$ are both not curves of $W$, then for any pair of essential properly embedded intervals in $W$ of $\alpha$ and $\beta$, their intersection is at most $2$. It is possible to show that their distance in $\mathcal{A}(W)$ is at most $2$. By taking a path in $\mathcal{A}(W)$  of length at most $2$ between these two arcs, and surgering each vertex to get a curve of $W$, it can be shown that $\pi_W(\alpha)\cup\pi_W(\beta)$ has diameter at most $4$ in $\mathcal{C}(W)$.
Finally, the case where exactly one of $\alpha$ or $\beta$ defines a curve of $W$, follows along similar lines. \end{proof}

The proof of the infinite diameter conclusion of Theorem~\ref{infinite diameter} will now be an immediate consequence of the following lemma.

\begin{lemma}\label{hyperbolics}
Let $W$ be a witness for $\mathcal{C}^{\dagger}_{\varepsilon}(S^{2})$ homeomorphic to $S_{0,n}$ with $n\geq 4$. Then for any pure mapping class $\varphi\in\mathrm{PMCG}(W)$ there exists $f\in \Ham(S^2)$ such that $f(W)=W$, and $f|_W$ represents $\varphi$.

Moreover, if $\varphi$ above is a pseudo-Anosov mapping class, then $f$ acts hyperbolically on $\mathcal{C}^{\dagger}_{\varepsilon}(S^{2})$. In particular, $\mathcal{C}^{\dagger}_{\varepsilon}(S^{2})$ has infinite diameter.
\end{lemma}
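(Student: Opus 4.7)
The plan has two main components: first, construct a Hamiltonian diffeomorphism $f$ of $S^{2}$ realising $\varphi$ on $W$; second, use the coarsely Lipschitz subsurface projection from Lemma~\ref{coarsely lipschitz} to transfer the hyperbolic action of $\varphi$ on $\mathcal{C}(W)$ into a hyperbolic action of $f$ on $\mathcal{C}^{\dagger}_{\varepsilon}(S^{2})$.

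For the first step, I would start by picking any smooth representative $\tilde\varphi$ of the pure mapping class $\varphi$ which equals the identity on a collar neighbourhood of $\partial W$. Such a representative exists because each boundary component of $W$ is fixed setwise by $\varphi$, and any twisting near the boundary can be isotoped into the interior. Then $\tilde\varphi^{*}\omega$ agrees with $\omega$ near $\partial W$ and has the same total area on $W$, so the relative version of Moser's theorem produces a diffeomorphism $\psi$ of $W$, isotopic to the identity rel $\partial W$, with $\psi^{*}\omega=\tilde\varphi^{*}\omega$. The composition $f_{W}\coloneqq\tilde\varphi\circ\psi^{-1}$ is then area-preserving on $W$, represents $\varphi$, and equals the identity near $\partial W$. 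Extending $f_{W}$ by the identity across each complementary disk $A_{i}$ yields a smooth, orientation-preserving, area-preserving diffeomorphism $f$ of $S^{2}$ with $f(W)=W$. Since $H^{1}(S^{2};\mathbb{R})=0$, every orientation-preserving, area-preserving diffeomorphism of $S^{2}$ lies in $\Ham(S^{2})$, so $f\in\Ham(S^{2})$ as required.

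For the second step, assume $\varphi$ is pseudo-Anosov and fix any $\alpha\in\mathcal{C}^{\dagger}_{\varepsilon}(S^{2})$. Since $f(W)=W$ and $f|_{W}$ represents $\varphi$, we have $\pi_{W}(f^{k}\alpha)=\varphi^{k}\pi_{W}(\alpha)$ as subsets of vertices of $\mathcal{C}(W)$. By Proposition~\ref{pseudoAnosove mapping classes are loxodromic elements}, $\varphi$ acts loxodromically on $\mathcal{C}(W)$, so for any chosen basepoint $x\in\pi_{W}(\alpha)$ the sequence $\varphi^{k}x$ grows linearly in $|k|$ and, by Lemma~\ref{coarsely lipschitz}(1), so does any choice $d_{\mathcal{C}(W)}\bigl(\pi_{W}(\alpha),\pi_{W}(f^{k}\alpha)\bigr)$. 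Iterating Lemma~\ref{coarsely lipschitz}(2) along any $\mathcal{C}^{\dagger}_{\varepsilon}(S^{2})$-geodesic from $\alpha$ to $f^{k}\alpha$ yields an inequality of the form
$$d_{\mathcal{C}(W)}\bigl(\pi_{W}(\alpha),\pi_{W}(f^{k}\alpha)\bigr)\leq 4\,d_{\mathcal{C}^{\dagger}_{\varepsilon}(S^{2})}(\alpha,f^{k}\alpha)+C,$$
for a uniform constant $C$, so linear growth of the left side forces positive asymptotic translation length of $f$ on $\mathcal{C}^{\dagger}_{\varepsilon}(S^{2})$, and in particular the graph has infinite diameter.

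The main obstacle is the construction step: one must simultaneously realise the topological class $\varphi$, preserve area, and match the identity across $\partial W$ so that the extension across each $A_{i}$ is smooth. The collar-boundary condition is exactly what makes these compatible, since it ensures that the areas already agree near $\partial W$ (so Moser applies relatively) and that the extension across the disks is smooth. The hyperbolicity conclusion is then a clean quasi-isometric embedding argument via subsurface projection, entirely parallel to the standard Masur--Minsky treatment of pseudo-Anosovs acting on curve graphs.
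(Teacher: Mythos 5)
Your proposal is correct, and the second half (transferring loxodromicity through the subsurface projection) is essentially identical to the paper's argument: both use Proposition~\ref{pseudoAnosove mapping classes are loxodromic elements} to get linear growth of $d_{\mathcal{C}(W)}(\pi_W(\alpha),\pi_W(f^k\alpha))$ and the coarse Lipschitz property of Lemma~\ref{coarsely lipschitz} to push this down to a linear lower bound on $d_{\mathcal{C}^{\dagger}_{\varepsilon}(S^2)}(\alpha,f^k\alpha)$, hence positive asymptotic translation length and infinite diameter.

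Where you genuinely diverge is the realization step. The paper first invokes Artin's theorem that $\mathrm{PMCG}(S_{0,n})$ is generated by Dehn twists, and then realizes each Dehn twist by a symplectomorphism supported in a small annular neighbourhood of the twisting curve via the Lagrangian/Weinstein neighbourhood theorem; the general $\varphi$ is then realized as a composition. You instead realize $\varphi$ all at once: take a smooth representative $\tilde\varphi$ that is the identity on a collar of $\partial W$, observe that $\tilde\varphi^{*}\omega$ and $\omega$ agree near $\partial W$ and have equal total area on $W$ (so their difference is exact with a compactly supported primitive, since $H^2_c(\mathrm{int}\,W)\cong\mathbb{R}$ via integration), and apply the relative Moser argument to correct $\tilde\varphi$ to an area-preserving map that is still the identity near $\partial W$; extending by the identity over the complementary disks and using that flux vanishes on $S^2$ gives $f\in\Ham(S^2)$. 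Both routes are valid. Yours is more direct and avoids the generation theorem entirely, at the cost of invoking the relative Moser stability statement; the paper's is more hands-on and only needs the local model near a single curve. Neither approach has a gap.
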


\begin{proof}
Any pure mapping class $\varphi \in \mathrm{PMCG}(W)$ may be realized by some $f\in \Ham(S^2)$ such that $f(W)=W$, and $f|_W$ represents $\varphi$.
To see this, note that Artin proved that the pure mapping class group $\mathrm{PMCG}(S_{0,n})$ is generated by Dehn twists, so $\varphi$ is a composition of Dehn twists.
It is enough to show that any Dehn twist is realized by some Hamiltonian diffeomorphism  of $S^2$ supported in the interior of $W$.
Indeed, for any smooth simple closed curve $\gamma$ on the sphere, by the \emph{Lagrangian Neighborhood Theorem} \cite{Weinstein, mcduff2017introduction}, we can pick an arbitrarily small open neighborhood of $\gamma$ whose symplectic structure agrees with a small open neighborhood about the core curve of $S^1\times (-\eta,\eta)$ where $\eta \ll 1$.
Now since the latter has symplectomorphisms that represent Dehn twists, we conclude the same about a small open neighborhood of $\gamma$. Now any symplectomorphism of $S^2$ is a Hamiltonian diffeomorphism.
Since every Dehn twist in $\mathrm{MCG}(W)$ can be represented by $h|_W$ for some $h\in \Ham(S^2)$ with $h(W)=W$, by taking compositions, it follows that there exists $f\in \Ham(S^2)$ such that $f(W)=W$ and $f|_W$ represents $\varphi$, as claimed.

Let $\alpha\in \mathcal{C}^{\dagger}_{\varepsilon}(S^{2})$. Suppose that $\varphi$ is a pseudo-Anosov mapping class. According to Proposition \ref{pseudoAnosove mapping classes are loxodromic elements}, we know that for all $n\in \mathbb{N^+}$, we have
$$
\mathrm{diam}_{\mathcal{C}(W)}(\pi_{W}(f^{n}(\alpha))\cup\pi_W(\alpha))=\mathrm{diam}_{\mathcal{C}(W)}(\varphi^n(\pi_W(\alpha))\cup \pi_W(\alpha))\geq Cn 
$$
for some constant $C>0$. Combining this with the fact that $\pi_W$ is $4$-Lipschitz (Lemma~\ref{coarsely lipschitz}) we have that $d_{\mathcal{C}^{\dagger}_{\varepsilon}(S^{2})}(f^n(\alpha),\alpha)\geq  \lfloor \frac{Cn}{4} \rfloor$.
By replacing $f$ with a higher power of itself if necessary, we can ensure that $C\geq 4$, and therefore $d_{\mathcal{C}^{\dagger}_{\varepsilon}(S^{2})}(f^n(\alpha),\alpha)\geq \lfloor \frac{Cn}{4} \rfloor \geq n$. This shows that $f$ is a hyperbolic isometry and the infinite diameter statement follows.\end{proof}

\subsection{Questions related to quasi-isometries} For $0<\varepsilon_1<\varepsilon_2\leq \frac{1}{2}$, we have an inclusion map of $\mathcal{C}^{\dagger}_{\varepsilon_2}(S^{2})$ into $\mathcal{C}^{\dagger}_{\varepsilon_1}(S^{2})$ because every $\varepsilon_2$-balanced curve is $\varepsilon_1$-balanced, and the notion of adjacency is the same. This is a straightforward consequence of the definition. This inclusion is therefore a $1$-Lipschitz embedding. However, it is not a quasi-isometric embedding.

\begin{theorem}\label{inclusion map is not a qi embedding}
For $0<\varepsilon_{1}<\varepsilon_{2}\leq \frac{1}{2}$, the inclusion map $i\colon \mathcal{C}^{\dagger}_{\varepsilon_2}(S^{2})\rightarrow \mathcal{C}^{\dagger}_{\varepsilon_1}(S^{2})$ is $1$-Lipschitz but not a quasi-isometric embedding.
\end{theorem}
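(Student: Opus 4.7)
The $1$-Lipschitz part is immediate: if $\alpha,\beta$ are adjacent in $\mathcal{C}^{\dagger}_{\varepsilon_2}(S^2)$, then they are $\varepsilon_2$-balanced (hence $\varepsilon_1$-balanced) and either disjoint or intersect transversely twice, so they are adjacent in $\mathcal{C}^{\dagger}_{\varepsilon_1}(S^2)$ as well. It follows that for any two vertices, $d_{\varepsilon_1} \leq d_{\varepsilon_2}$.

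For the failure of quasi-isometric embedding, the plan is to construct an element $f \in \Ham(S^2)$ that acts hyperbolically on $\mathcal{C}^{\dagger}_{\varepsilon_2}(S^2)$ but fixes some vertex of $\mathcal{C}^{\dagger}_{\varepsilon_1}(S^2)$; the resulting bounded orbit in the larger graph against the unbounded orbit in the smaller graph will defeat any quasi-isometric embedding. Concretely, I would choose an integer $n \geq 4$ large enough and pairwise disjoint open disks $A_1, \ldots, A_n \subset S^2$ whose complement $W$ satisfies:
\begin{enumerate}
    \item $\mathrm{area}(A_i) + \mathrm{area}(W) < \varepsilon_2$ for every $i$, so that $W$ is a witness for $\mathcal{C}^{\dagger}_{\varepsilon_2}(S^2)$ by Lemma~\ref{Sufficient condition for W to be a witness};
    \item $\mathrm{area}(A_1) \geq \varepsilon_1$, so that $A_1$ contains a smooth simple closed curve $\gamma_0$ bounding a subdisk of $A_1$ of area exactly $\varepsilon_1$; this $\gamma_0$ is automatically $\varepsilon_1$-balanced since the complementary region outside $A_1$ has area $1 - \varepsilon_1 \geq \varepsilon_1$.
\end{enumerate}
Lemma~\ref{hyperbolics} then provides $f \in \Ham(S^2)$, supported in $W$, realizing a pseudo-Anosov mapping class of $W \cong S_{0,n}$; by Lemma~\ref{hyperbolics}, $f$ is hyperbolic on $\mathcal{C}^{\dagger}_{\varepsilon_2}(S^2)$. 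Since $\gamma_0 \subset A_1$ is disjoint from $W$, we have $f(\gamma_0) = \gamma_0$.

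Pick any $\alpha \in \mathcal{C}^{\dagger}_{\varepsilon_2}(S^2)$. On the one hand, by Lemma~\ref{hyperbolics}, $d_{\varepsilon_2}(\alpha, f^n\alpha) \to \infty$ as $n \to \infty$. On the other hand, using that $f$ is an isometry fixing $\gamma_0$,
$$
d_{\varepsilon_1}(\alpha, f^n\alpha) \leq d_{\varepsilon_1}(\alpha, \gamma_0) + d_{\varepsilon_1}(\gamma_0, f^n\alpha) = 2\, d_{\varepsilon_1}(\alpha, \gamma_0),
$$
which is a fixed constant (finite because $\mathcal{C}^{\dagger}_{\varepsilon_1}(S^2)$ is connected). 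These two estimates together contradict the lower bound $\frac{1}{K} d_{\varepsilon_2}(\alpha, f^n \alpha) - C \leq d_{\varepsilon_1}(\alpha, f^n \alpha)$ that any $(K,C)$-quasi-isometric embedding would provide, completing the proof.

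The only real obstacle is verifying that the disk configuration can be arranged for every pair $0 < \varepsilon_1 < \varepsilon_2 \leq \tfrac{1}{2}$. Taking $\mathrm{area}(W) = a$ small, $\mathrm{area}(A_1) = \varepsilon_1 + \delta$ for small $\delta$, and distributing the remaining area $1 - a - \varepsilon_1 - \delta$ among $A_2, \ldots, A_n$, the constraints in (1) reduce to requiring each of these areas to be less than $\varepsilon_2 - a$; this is possible as soon as $n - 1 > (1 - \varepsilon_1 - \delta - a)/(\varepsilon_2 - a)$, which holds for any $n$ large enough once $a, \delta$ are small (and in particular $a < \varepsilon_2 - \varepsilon_1$). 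So the required configuration exists, and the argument goes through.
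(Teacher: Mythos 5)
Your proposal is correct and follows essentially the same route as the paper: both construct a witness $W$ for $\mathcal{C}^{\dagger}_{\varepsilon_2}(S^2)$ one of whose complementary disks is large enough to contain an $\varepsilon_1$-balanced curve fixed by the hyperbolic element $f$ from Lemma~\ref{hyperbolics}, and then play the unbounded $\varepsilon_2$-orbit against the bounded $\varepsilon_1$-orbit. The only cosmetic difference is that the paper arranges $\mathrm{area}(A_1)=\varepsilon_1$ exactly and uses $\eta=\partial A_1$ itself (with $\alpha$ chosen disjoint from it, giving $d_{\varepsilon_1}\leq 2$), whereas you route through a fixed interior curve $\gamma_0$ via the triangle inequality; both are valid, and your use of $f(\gamma_0)=\gamma_0$ is justified because the $f$ built in Lemma~\ref{hyperbolics} is supported in $W$.
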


We will prove this theorem right after we prove the following lemma.

\begin{lemma}\label{Witness for the big one but not for the small}
Let $0<\varepsilon_1<\varepsilon_2\leq \frac{1}{2}$.
There exists a witness $W\subset S^2$ for $\mathcal{C}_{\varepsilon_{2}}^{\dagger}(S^{2})$ such that a boundary component of $W$ is $\varepsilon_1$-balanced.

\end{lemma}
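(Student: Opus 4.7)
The plan is to apply Lemma~\ref{Sufficient condition for W to be a witness}: we will explicitly construct the disks $A_1,\dots,A_n$ whose complement is $W$, arranging that $A_1$ has area exactly $\varepsilon_1$ (so that $\partial A_1$ is the desired $\varepsilon_1$-balanced boundary component), while keeping $\mathrm{area}(W)$ and each $\mathrm{area}(A_i)$ for $i\geq 2$ small enough that the hypothesis of Lemma~\ref{Sufficient condition for W to be a witness} is satisfied for $\varepsilon=\varepsilon_2$.

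First, I would observe that if a smoothly embedded closed disk $A_1\subset S^2$ has area exactly $\varepsilon_1$, then the two components of $S^2\setminus\partial A_1$ have areas $\varepsilon_1$ and $1-\varepsilon_1$ respectively; since $\varepsilon_1\leq\varepsilon_2\leq\tfrac{1}{2}$ we have $1-\varepsilon_1\geq\tfrac{1}{2}\geq\varepsilon_1$, so $\partial A_1$ is $\varepsilon_1$-balanced. This handles the boundary condition, provided $A_1$ ends up as a boundary component of $W$.

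Next, I would choose an integer $n\geq 4$ large enough that
\[
\frac{1-\varepsilon_1}{n} < \frac{\varepsilon_2-\varepsilon_1}{2},
\]
which is possible since $\varepsilon_2>\varepsilon_1$. Inside the open disk $S^2\setminus\overline{A_1}$ (of area $1-\varepsilon_1$), I would choose $n-1$ pairwise-disjoint smoothly embedded closed disks $\overline{A_2},\dots,\overline{A_n}$, together with their smooth complement $W$, such that each of $\mathrm{area}(A_2),\dots,\mathrm{area}(A_n)$ and $\mathrm{area}(W)$ equals $(1-\varepsilon_1)/n$. Concretely, one may pick any smooth embedding of $n-1$ disjoint disks into $S^2\setminus\overline{A_1}$, and then rescale them via an area-preserving diffeomorphism (using Moser's trick applied to a smooth bump-function model for the area form) to obtain the prescribed areas; the resulting $W$ is then connected and homeomorphic to $S_{0,n}$, and $\partial A_1$ is one of its boundary components.

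Finally, I would verify the hypothesis of Lemma~\ref{Sufficient condition for W to be a witness} for $\varepsilon=\varepsilon_2$. For $i=1$,
\[
\mathrm{area}(A_1)+\mathrm{area}(W) = \varepsilon_1 + \frac{1-\varepsilon_1}{n} < \varepsilon_1 + \frac{\varepsilon_2-\varepsilon_1}{2} < \varepsilon_2,
\]
and for $i\geq 2$,
\[
\mathrm{area}(A_i)+\mathrm{area}(W) = \frac{2(1-\varepsilon_1)}{n} < \varepsilon_2-\varepsilon_1 < \varepsilon_2.
\]
Hence $W$ is a witness for $\mathcal{C}^{\dagger}_{\varepsilon_2}(S^2)$ with $\partial A_1$ an $\varepsilon_1$-balanced boundary component, as required. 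There is no real obstacle here beyond choosing $n$ large enough and invoking Moser to realize the prescribed areas; both are standard.
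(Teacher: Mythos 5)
Your proposal is correct and follows essentially the same approach as the paper: choose $A_1$ of area $\varepsilon_1$, take $A_2,\dots,A_n$ and $W$ each of area $(1-\varepsilon_1)/n$ for $n$ large, and invoke Lemma~\ref{Sufficient condition for W to be a witness}. The only differences are cosmetic — you make the choice of $n$ explicit and add the (standard) Moser-trick justification for realizing the prescribed areas.
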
 

\begin{proof} We know that a subsurface $W\subset S^2$ is given by removing $n$ open disks from $S^2$, labelled by $A_1,\dots ,A_n$. Then $W$ will be homeomorphic to $S_{0,n}$. We would like to use Lemma~\ref{Sufficient condition for W to be a witness}, which states that if  for every $i$ we have that $\mathrm{area}(A_{i})+\mathrm{area}(W)<\varepsilon_{2}$, then $W$ is a witness for $\mathcal{C}_{\varepsilon_{2}}^{\dagger}(S^{2})$.
To construct $W$, it is enough first to pick $A_1$ such that $\mathrm{area}(A_1)=\varepsilon_1$. And then pick $A_2,\dots,A_n,W$ each of area \[\frac{1-\mathrm{area}(A_1)}{n}=\frac{1-\varepsilon_1}{n},\] and then pick a sufficiently large $n$ such that we have $\mathrm{area}(A_{i})+\mathrm{area}(W)<\varepsilon_{2}$ for each $i$. 
This is possible because $$\mathrm{area}(A_1)+\mathrm{area}(W)=\varepsilon_1+\frac{1-\varepsilon_1}{n},$$ and since $\varepsilon_1<\varepsilon_2$ this will be less than $\varepsilon_2$ for sufficiently large $n$. For $2\leq i \leq n$ we have $$\mathrm{area}(A_i)+\mathrm{area}(W)=\frac{2-2\mathrm{area}(A_1)}{n},$$ which tends to $0$ as $n$ tends to infinity, so will be less than $\varepsilon_2$ for sufficiently large $n$. This gives us the required witness subsurface $W$.
\end{proof}

\begin{proof}[Proof of Theorem~\ref{inclusion map is not a qi embedding}]
For all $\alpha,\beta\in \mathcal{C}^{\dagger}_{\varepsilon_2}(S^{2})$, we have
$$
d_{\mathcal{C}^{\dagger}_{\varepsilon_1}(S^{2})}(\alpha,\beta)\leq d_{\mathcal{C}^{\dagger}_{\varepsilon_2}(S^{2})}(\alpha,\beta),
$$ which is immediate from the definitions.
Therefore, the inclusion map from $\mathcal{C}^{\dagger}_{\varepsilon_2}(S^{2})$ to $\mathcal{C}^{\dagger}_{\varepsilon_1}(S^{2})$ is $1$-Lipschitz.
By Lemma \ref{Witness for the big one but not for the small}, we can find a witness $W$ for $\mathcal{C}^{\dagger}_{\varepsilon_2}(S^{2})$ with a boundary component $\eta$ in $\mathcal{C}^{\dagger}_{\varepsilon_1}(S^{2})$. 
We can pick an $\varepsilon_{2}$-balanced curve $\alpha$ which is disjoint from $\eta$ (and the disk of area $\varepsilon_1$ that it bounds).
Following Lemma~\ref{hyperbolics}, we know that we can find $f\in \Ham(S^2)$ such that $d_{\mathcal{C}^{\dagger}_{\varepsilon_{2}}(S^{2})}(f^n(\alpha),\alpha)\geq n$ for all $n\in \mathbb{N}^{+}$ and such that $f(W)=W$.
However, $d_{\mathcal{C}^{\dagger}_{\varepsilon_{1}}(S^{2})}(f^n(\alpha),\alpha)\leq 2$ for all $n\in \mathbb{N}^{+}$, because $\eta\subseteq \partial W$ is disjoint from both $f^n(\alpha)$ and $\alpha$. By considering sufficiently large values of $n$, we see that the inclusion map cannot be a quasi-isometric embedding.
\end{proof}

\begin{question*}
For $0<\varepsilon_1<\varepsilon_2\leq \frac{1}{2}$, one can ask whether the corresponding balanced curve graphs $\mathcal{C}^{\dagger}_{\varepsilon_1}(S^{2})$ and $\mathcal{C}^{\dagger}_{\varepsilon_2}(S^{2})$ are quasi-isometric to each other, or have quasi-isometric embeddings between them. From the above discussion, the inclusion map cannot be a candidate, but this does not rule out other possible choices of maps.
\end{question*}

\section{Hyperbolicity of the balanced curve graph}\label{section-Hyperbolicity of the balanced curve graph}

In this section, for any $0<\varepsilon\leq \frac{1}{2}$, we prove that the $\varepsilon$-balanced curve graph is connected and Gromov hyperbolic. The fact that the diameter is infinite was established in Lemma~\ref{hyperbolics}.
We rely on an important construction that we introduce in Section~\ref{subsection-projections and the guessing geodesics}: For each pair of vertices $\alpha$ and $\beta$, we construct a subgraph $\mathcal{L}(\alpha,\beta)$ containing them.
The connectivity of these subgraphs is proved in Section~\ref{Subsection-connectivity}. 
We carefully verify that our subgraphs satisfy the hypotheses of the `guessing geodesics lemma' (Lemma~\ref{Guessing Geodesic Lemma}) in Section~\ref{Subsection-hyperbolicity}. Gromov hyperbolicity then follows.

\subsection{Projections and `Guessing Geodesics'}
\label{subsection-projections and the guessing geodesics}

In this section, for every pair of vertices $\alpha$ and $\beta$ in $\mathcal{C}^{\dagger}_{\varepsilon}(S^2)$, we construct a subgraph $\mathcal{L}(\alpha,\beta)$ containing $\alpha$ and $\beta$.
Our goal, established later on in Sections~\ref{Subsection-connectivity} and \ref{Subsection-hyperbolicity}, is to prove that these subgraphs satisfy the criteria in Lemma~\ref{Guessing Geodesic Lemma}.

First we must treat the degenerate case, for which we define $\mathcal{L}(\alpha,\alpha)=\{\alpha\}$. Of course, the most interesting case is when we have $\alpha\neq \beta$. In this case, the pair of curves may or may not be disjoint, or may intersect transversely or not transversely.
We will write $\alpha \pitchfork \beta$ to indicate that $\alpha$ and $\beta$ are disjoint, or intersect transversely.
It is clear that if the curves intersect transversely then the intersection $\alpha\cap\beta$ has finite cardinality.

Now, to define $\mathcal{L}(\alpha,\beta)$ in full generality, we first define $\mathcal{L}_\pitchfork(\alpha,\beta)$ for pairs of curves such that $\alpha\pitchfork\beta$. 
Roughly, the idea is to consider connected subarcs/curves $\alpha'\subseteq \alpha$ and $\beta'\subseteq \beta$, and attempt to ``project'' $\beta'$ to a set of curves $\pi_{\alpha'}(\beta')$ that is disjoint from $\alpha'$. 
By interpolating between smaller choices of $\alpha'$ and larger choices of $\beta'$, we may find a quasi-path from $\alpha$ to $\beta$ by using the sets $\pi_{\alpha'}(\beta')$ (we justify this in Section~\ref{Subsection-connectivity}). 
Let us now fill in the details of the construction by first defining $\pi_{\alpha'}(\beta')$.

\begin{definition}\label{projection}

Let $\alpha$ and $\beta$ be simple closed curves in $S^2$  and $\alpha \pitchfork \beta$. Let $\alpha'\subset \alpha$ and $\beta'\subset \beta$ be connected compact subarcs. 
We write $\pi_{0}(\beta'\backslash \alpha')$ for the set of connected components of $\beta'\backslash \alpha'$.
We define the projection $\pi_{\alpha'}(\beta')$ to be the subset of equators whose elements are $\gamma\in \mathcal{C}^{\dagger}_{\frac{1}{2}}(S^2)$ with the following properties: 

\begin{enumerate}
    \item $\gamma\cap \alpha'=\emptyset$,
    \item $\gamma\pitchfork \beta'$, and $\gamma$ is disjoint from the endpoints of $\beta'$, and
    \item for all connected components $b\in \pi_{0}(\beta'\backslash \alpha')$, we have $|\gamma\cap b|\leq 2$.
\end{enumerate}
Furthermore, we set $\pi_{\alpha}(\beta')=\pi_{\alpha}(\beta)=\{\alpha\}$.\end{definition}

\begin{remark} 
By the definition of $\pi_{\alpha'}(\beta')$, for $\beta''\subset\beta'$ we must have that,
$$
\pi_{\alpha'}(\beta')\subseteq \pi_{\alpha'}(\beta''),
$$ which is a straightforward consequence of the definition. Any equator in $\pi_{\alpha'}(\beta')$ will satisfy the first two conditions of the definition above. The third condition will hold because if an equator intersects each component of $\beta'\setminus \alpha'$ at most twice transversely, then it must also be true for $\beta''\setminus \alpha'$, and so the equator is also an element of $\pi_{\alpha'}(\beta'')$.
\end{remark}

Let $0<\varepsilon\leq \frac{1}{2}$. 
For $\alpha'$ and $\beta'$ as in Definition~\ref{projection}, the set $\pi_{\alpha'}(\beta')$ will be a subset of $\mathcal{C}^{\dagger}_{\varepsilon}(S^2)$.
Ideally, we want this to be a uniformly bounded diameter set in $\mathcal{C}^{\dagger}_{\varepsilon}(S^2)$, if we are to use this to obtain a quasi-path from $\alpha$ to $\beta$.
To begin our analysis on this, we have two important cases to consider:
\begin{enumerate}
    \item There is a connected component of $S^2\setminus (\alpha^{\prime}\cup \beta^{\prime})$ whose area is strictly greater than $1-\varepsilon$. In this case, we say that $\alpha'$ and $\beta'$ are \emph{inadmissible} for $\varepsilon$. 
    \item Each connected component of $S^2\setminus(\alpha'\cup \beta')$ has area less than or equal to $1-\varepsilon$. In this case, we say that $\alpha'$ and $\beta'$ are \emph{admissible} for $\varepsilon$.
\end{enumerate}

We are now ready to prove

\begin{lemma}
    Let $0<\varepsilon\leq \frac{1}{2}$ and let $\alpha'$ and $\beta'$ be as in Definition~\ref{projection}. If $\alpha'$ and $\beta'$ are inadmissible for $\varepsilon$, then $\pi_{\alpha'}(\beta')$ has infinite diameter in $\mathcal{C}^{\dagger}_{\varepsilon}(S^2)$.
\end{lemma}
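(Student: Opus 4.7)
The plan is to use the large component $D$ of $S^{2}\setminus (\alpha'\cup\beta')$ (with $\mathrm{area}(D)>1-\varepsilon$) as a reservoir to build a witness subsurface $W\subset D$ for $\mathcal{C}_{\varepsilon}^{\dagger}(S^{2})$ together with a Hamiltonian pseudo-Anosov $f\in\Ham(S^{2})$ supported inside $W$. Because $W\subset D$ is disjoint from $\alpha'\cup\beta'$, the map $f$ will fix $\alpha'\cup\beta'$ pointwise; hence the iterates $f^{n}(\gamma_{0})$ of any initial equator $\gamma_{0}\in\pi_{\alpha'}(\beta')$ will remain inside $\pi_{\alpha'}(\beta')$ and, by Lemma~\ref{hyperbolics}, escape to infinity in $\mathcal{C}_{\varepsilon}^{\dagger}(S^{2})$.

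The first step is to build the witness $W\subset D$. Choose a compact sub-surface $D''\subset D$ with smooth boundary, of the form ``$D$ with small collars of its topological ends removed'', so that $D''\cong S_{0,k}$ and $\mathrm{area}(D'')>1-\varepsilon$. This is possible because $D$ has finite topology, and we may enclose each connected component of $S^{2}\setminus D$ (whose total area is less than $\varepsilon$) inside a small open disk. Then remove $l$ pairwise disjoint small open disks $B_{1},\dots,B_{l}$ from the interior of $D''$ to obtain $W=D''\setminus (B_{1}\cup\cdots\cup B_{l})\cong S_{0,k+l}$. Arrange that each complementary disk of $W$ in $S^{2}$---namely the $k$ ``outer'' disks of $S^{2}\setminus D''$ and the $l$ ``inner'' disks $B_{j}$---has area less than $\varepsilon-\eta$, and that $\mathrm{area}(W)<\eta$, for a small fixed $\eta>0$. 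This is possible since $\mathrm{area}(S^{2}\setminus D'')<\varepsilon$ and since $l$ may be chosen as large as we please. Then $\mathrm{area}(A_{i})+\mathrm{area}(W)<\varepsilon$ for every complementary disk $A_{i}$ of $W$, so by Lemma~\ref{Sufficient condition for W to be a witness}, $W$ is a witness for $\mathcal{C}_{\varepsilon}^{\dagger}(S^{2})$; taking $l\geq 4$ ensures $W\cong S_{0,n}$ with $n\geq 4$.

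Next, apply Lemma~\ref{hyperbolics} to a pseudo-Anosov mapping class of $W$: one obtains $f\in\Ham(S^{2})$, supported in the interior of $W$, such that $d_{\mathcal{C}_{\varepsilon}^{\dagger}(S^{2})}(\gamma,f^{n}(\gamma))\geq n$ for every $\gamma\in\mathcal{C}_{\varepsilon}^{\dagger}(S^{2})$ and every $n\geq 1$. Because $W\subset D$ is disjoint from $\alpha'\cup\beta'$, the diffeomorphism $f$ fixes $\alpha'\cup\beta'$ pointwise. I would then produce an initial equator $\gamma_{0}\subset D$: since $D$ is open, connected and $\mathrm{area}(D)>1/2$, a sweep-out of $D$ by a continuous family of simple closed curves combined with the intermediate value theorem yields a simple closed curve $\gamma_{0}\subset D$ bounding one side of area exactly $1/2$. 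Because $\gamma_{0}\subset D$ is disjoint from $\alpha'\cup\beta'$, the three conditions of Definition~\ref{projection} hold trivially, and $\gamma_{0}\in\pi_{\alpha'}(\beta')$.

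To conclude: for each $n\geq 1$, $f^{n}(\gamma_{0})$ is an equator (as $f$ preserves area), and since $f$ fixes $\alpha'\cup\beta'$ pointwise, $f^{n}(\gamma_{0})\cap(\alpha'\cup\beta')=\emptyset$, giving $f^{n}(\gamma_{0})\in\pi_{\alpha'}(\beta')$. Combined with the lower bound $d_{\mathcal{C}_{\varepsilon}^{\dagger}(S^{2})}(\gamma_{0},f^{n}(\gamma_{0}))\geq n$, this proves that $\pi_{\alpha'}(\beta')$ has infinite diameter in $\mathcal{C}_{\varepsilon}^{\dagger}(S^{2})$. The main technical obstacle is the construction of $W$: one must simultaneously ensure that $W\subset D$, that $W\cong S_{0,n}$ with $n\geq 4$, and that the area inequality $\mathrm{area}(A_{i})+\mathrm{area}(W)<\varepsilon$ holds for every complementary disk $A_{i}$ of $W$---including the possibly ``large'' outer disks inherited from $S^{2}\setminus D''$.
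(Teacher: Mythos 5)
Your proposal is correct and follows essentially the same strategy as the paper: locate the complementary component of area greater than $1-\varepsilon$, build a witness subsurface inside it, take the hyperbolically-acting $f\in\Ham(S^2)$ from Lemma~\ref{hyperbolics} supported there, and observe that the iterates of an equator contained in that component all remain in $\pi_{\alpha'}(\beta')$ while escaping to infinity. The only (immaterial) difference is that you construct the witness directly from Lemma~\ref{Sufficient condition for W to be a witness}, whereas the paper first passes to a closed disk $X'\subset X$ of area greater than $1-\varepsilon$ and invokes the witness construction of Lemma~\ref{Witness for the big one but not for the small}.
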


\begin{proof}
    There is a connected component $X$ of $S^2\setminus(\alpha'\cup\beta')$ whose area is strictly greater than $1-\varepsilon$.
    Now consider a closed disk $X'\subset X$ whose area is also strictly greater than $1-\varepsilon$. 
    The complement of $X'$ has area strictly less than $\varepsilon$.
    By the construction of the witness/witnesses $W$ in Lemma~\ref{Witness for the big one but not for the small}, by taking $\varepsilon_1=\mathrm{area}(S^2\setminus X')$ and $\varepsilon_2=\varepsilon$, there is a witness $W$ for $\mathcal{C}^\dagger_\varepsilon(S^2)$ with $\partial X'\subset \partial W$, and $W\subset X'$.
    Now, $X'$ contains an equator $\gamma$. Let $f$ be a Hamiltonian diffeomorphism as in Lemma~\ref{hyperbolics}, which is supported on $W$ (and therefore $X'$) and acts hyperbolically. Then for every $i\in\mathbb{Z}$, we have that $f^i\gamma\subset X'$. It is then immediate by the definition that every $f^i\gamma$ is contained in $\pi_{\alpha'}(\beta')$ because $\gamma\subset X'$ is disjoint from $\alpha'\cup\beta'$. Since the distance between $\gamma$ and $f^i\gamma$ can be arbitrarily large (as shown in the proof of Theorem~\ref{inclusion map is not a qi embedding}) we deduce that $\pi_{\alpha'}(\beta')$ has infinite diameter in $\mathcal{C}^\dagger_\varepsilon(S^2)$ as required.\end{proof}

Conversely, we will show later that if $\alpha'$ and $\beta'$ are admissible for $\varepsilon$ then $\pi_{\alpha'}(\beta')$ will have finite diameter, bounded by a universal constant, see Lemma~\ref{projection gives you uniformly bounded set}. Suppose $\alpha$ and $\beta$ are transverse. By varying $\alpha'$ and $\beta'$ which are admissible for $\varepsilon$, and considering $\pi_{\alpha'}(\beta')$ and $\pi_{\beta'}(\alpha')$, we will obtain many curves that in some sense interpolate between $\alpha$ and $\beta$ in $\mathcal{C}^\dagger_\varepsilon(S^2)$. A key definition is as follows.

\begin{definition}\label{connected subgraph when curves intersect transversely}
Let $\alpha,\beta\in \mathcal{C}^{\dagger}_{\varepsilon}(S^{2})$. Suppose that $\alpha \pitchfork \beta$. We define 
$$
\mathcal{L}_{\pitchfork}(\alpha,\beta)\coloneqq\bigcup_{\alpha',\beta'} \pi_{\alpha'}(\beta')\cup\pi_{\beta'}(\alpha'),
$$
where the union above is taken over all $\alpha'\subseteq \alpha$ and $\beta'\subseteq \beta$ such that

\begin{enumerate}
    \item $\alpha'$ and $\beta'$ are admissible for $\varepsilon$,
    \item $\alpha'$ is a connected subarc of $\alpha$ or $\alpha'=\alpha$, and
    \item $\beta'$ is a connected subarc of $\beta$ or $\beta'=\beta$.
\end{enumerate}

\end{definition}

\begin{remark}
    We always have that $\mathcal{L}_{\pitchfork}(\alpha,\beta)$ is non-empty. Indeed, observe that $\alpha$ and $\beta$ will always be an element of $\mathcal{L}_{\pitchfork}(\alpha,\beta)$ because we may pick $\alpha'=\alpha$ and $\beta'=\beta$, in which case $\alpha'$ and $\beta'$ satisfy the conditions in the definition above. We then have, for example, that $\pi_{\alpha'}(\beta')=\{\alpha\}$, so $\alpha$ is in $\mathcal{L}_{\pitchfork}(\alpha,\beta)$ (and similarly for $\beta$).
\end{remark}

A key lemma is the following 

\begin{lemma}\label{coarsely connected}
For every $\alpha,\beta\in \mathcal{C}^{\dagger}_{\varepsilon}(S^{2})$ with $\alpha\pitchfork \beta$, the $8$-neighbourhood of $\mathcal{L}_{\pitchfork}(\alpha,\beta)$ is connected.
\end{lemma}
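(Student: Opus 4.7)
The plan is to prove the stronger statement that any two elements of $\mathcal{L}_{\pitchfork}(\alpha, \beta)$ are connected by a finite sequence inside $\mathcal{L}_{\pitchfork}(\alpha, \beta)$ whose consecutive terms are at distance at most $16$ in $\mathcal{C}^{\dagger}_{\varepsilon}(S^{2})$; the geodesic in $\mathcal{C}^\dagger_\varepsilon(S^2)$ between any two such consecutive terms then lies in $N_{8}(\mathcal{L}_{\pitchfork}(\alpha, \beta))$, giving the desired connectivity. Before interpolating, I would record two observations. First, since both $\alpha$ and $\beta$ are $\varepsilon$-balanced, the pair $(\alpha, \beta)$ is always admissible, because every component of $S^{2}\setminus(\alpha\cup\beta)$ is contained in a component of $S^{2}\setminus\alpha$ and hence has area at most $1-\varepsilon$. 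Second, admissibility is monotone under enlarging $\alpha'$ or $\beta'$, so admissible pairs form an upward-closed poset with maximum $(\alpha, \beta)$; by definition, $\alpha \in \pi_{\alpha}(\beta) \subseteq \mathcal{L}_{\pitchfork}(\alpha, \beta)$ and likewise $\beta \in \mathcal{L}_{\pitchfork}(\alpha, \beta)$.

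For each admissible pair $(\alpha', \beta')$ I would construct a \emph{canonical equator} $\gamma_{\alpha', \beta'} \in \pi_{\alpha'}(\beta')$ by a bicorn-type surgery organised by the dual tree $T_{\alpha', \beta'}$ from Definition~\ref{dual tree}: push $\alpha'$ to a nearby parallel arc $\tilde{\alpha}'$ in a chosen complementary component of $S^{2}\setminus (\alpha' \cup \beta')$, and close $\tilde{\alpha}'$ up using a subarc of $\beta'$ into a side of total area in $[\varepsilon, 1-\varepsilon]$. Admissibility is precisely what guarantees that such a side exists and that the resulting curve is an equator, and the three conditions of Definition~\ref{projection} can be verified by direct inspection on $T_{\alpha', \beta'}$.

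The heart of the argument is a single-arc interpolation lemma: if $(\alpha_1', \beta')$ and $(\alpha_2', \beta')$ are admissible with $\alpha_2'$ obtained from $\alpha_1'$ by concatenating one adjacent arc of $\alpha \setminus \beta$ (and the analogous statement for growing $\beta'$), then suitable choices of $\gamma_{\alpha_1', \beta'}$ and $\gamma_{\alpha_2', \beta'}$ are at uniformly bounded distance in $\mathcal{C}^{\dagger}_{\varepsilon}(S^{2})$, with the bound independent of $\alpha$, $\beta$ and $\varepsilon$. Chaining such moves up the poset of admissible pairs, one connects any canonical equator to $\gamma_{\alpha, \beta}$, which is disjoint from $\alpha$ (hence within distance $1$) after the final perturbation. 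For an arbitrary $\gamma \in \pi_{\alpha'}(\beta')$, the three conditions of Definition~\ref{projection} force $\gamma$ and $\gamma_{\alpha', \beta'}$ to admit a common surgery, placing them within a uniformly bounded distance, and finishing the connection.

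The principal obstacle, and where I expect the work to lie, is the compatibility of the canonical-equator construction with a single-arc extension: when $\alpha'$ absorbs a new arc of $\alpha$, the ``small-area side'' used for the closure can jump to the opposite side of the new arc, which may force a role swap from $\pi_{\alpha'}(\beta')$ to $\pi_{\beta'}(\alpha')$. Establishing that the old and new canonical equators remain close through such a swap requires a careful case analysis using the dual-tree combinatorics and the distribution of areas among the complementary components. It is this bookkeeping, rather than any single geometric estimate, that drives the numerical constant $8$ in the statement; a more delicate construction might reduce it, but at the cost of additional casework.
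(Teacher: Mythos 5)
Your overall strategy is the same as the paper's: reduce to showing that elements of $\mathcal{L}_{\pitchfork}(\alpha,\beta)$ can be chained together with uniformly bounded consecutive distances, by interpolating through the sets $\pi_{\alpha'}(\beta')$ as the subarcs change by one intersection at a time, using the uniform diameter bound on each projection set and then connecting the extremes of the chain to $\alpha$ and $\beta$. Your preliminary observations (admissibility of $(\alpha,\beta)$, monotonicity, membership of $\alpha$ and $\beta$) are correct, and the reduction to ``consecutive terms at distance at most $16$'' is sound.

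However, the proposal has a genuine gap exactly where you say you ``expect the work to lie'': the single-arc interpolation lemma is asserted, not proved, and it is the crux of the whole argument. In the paper this is Lemma~\ref{shrink beta to get a quasi path}, and its proof is not the elaborate dual-tree case analysis you anticipate but a short area argument: removing one intersection point merges two complementary regions $R_1,R_2$ into a component of area $>1-\varepsilon$; writing $R_3$ for the rest of the sphere, one cannot have both $\mathrm{area}(R_1)+\mathrm{area}(R_3)\leq\frac12$ and $\mathrm{area}(R_2)+\mathrm{area}(R_3)\leq\frac12$ (else $\mathrm{area}(R_3)\leq 0$), so an equator avoiding one of $R_1,R_2$ exists and lies in \emph{both} projection sets. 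Your worry about a ``role swap'' between $\pi_{\alpha'}(\beta')$ and $\pi_{\beta'}(\alpha')$ is a complication the paper avoids entirely: it never varies both arcs in the connectivity argument. Instead it first uses the containment $\pi_{\alpha'}(\beta)\subseteq\pi_{\alpha'}(\beta')$ (together with Lemma~\ref{projection gives you uniformly bounded set}) to move any $\eta\in\pi_{\alpha'}(\beta')$ within distance $8$ of the one-parameter family $\pi_{\alpha_i}(\beta)$ with $\beta$ fixed, and only then shrinks $\alpha_i$ one intersection at a time. A second, smaller gap is the endpoint step: since $\pi_{\alpha}(\beta)=\{\alpha\}$ by convention, the transition from the largest proper subarc $\alpha'$ (with $|\alpha'\cap\beta|=|\alpha\cap\beta|$) to $\alpha$ itself is not a ``concatenate one arc of $\alpha\setminus\beta$'' move, and elements of $\pi_{\alpha'}(\beta)$ need not be disjoint from $\alpha$; the paper's Lemma~\ref{projection of largest subarc} supplies a vertex of $\pi_{\alpha'}(\beta)$ meeting $\alpha$ exactly twice. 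Without proofs of these two steps your write-up is an outline of the paper's argument rather than a proof.
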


We will prove Lemma~\ref{coarsely connected} in Section~\ref{Subsection-connectivity} using the methods in Section~\ref{Subsection-Key Lemmas}.

We now define our `guess' $\mathcal{L}(\alpha,\beta)$ for a geodesic (or efficient path) between $\alpha$ and $\beta$ in general.

\begin{definition}
Let $0<\varepsilon\leq \frac{1}{2}$ and let $\alpha,\beta\in \mathcal{C}^{\dagger}_{\varepsilon}(S^{2})$. We define $\mathcal{L}(\alpha,\alpha)=\{\alpha\}$. Now suppose instead that $\alpha\neq \beta$. Write $P=P(\alpha,\beta)$ for the set of pairs $(\alpha_0,\beta_0)$ such that $\alpha_0,\beta_0\in \mathcal{C}^{\dagger}_{\varepsilon}(S^{2})$, $\alpha$ is adjacent to $\alpha_0$, $\beta$ is adjacent to $\beta_0$, and $\alpha_0\pitchfork\beta_0$. 
We define 
$$
\mathcal{L}(\alpha,\beta)\coloneqq  N_8 \left( \bigcup_{(\alpha_0,\beta_0)\in P} \mathcal{L}_{\pitchfork}(\alpha_0,\beta_0) \right).
$$
By Lemma~\ref{coarsely connected}, the subgraphs spanned by $\mathcal{L}(\alpha,\beta)$ are connected.

\end{definition}

\subsection{Criteria to ensure small distances}\label{Subsection-Key Lemmas}
In this section, we provide sufficient criteria for a pair of $\varepsilon$-balanced curves to have small distance in $\mathcal{C}^{\dagger}_{\varepsilon}(S^{2})$, which plays an essential role in the proofs for the rest of the paper.

\begin{definition}
    Let $\alpha,\beta$ be simple closed curves on $S^2$ such that $\alpha\pitchfork\beta$. We say that a complementary component $D$ of $\alpha\cup\beta$ is a \emph{bigon} if $\bar D \setminus D$ is a union of a connected subarc of $\alpha$ and a connected subarc of $\beta$. In general, the number of connected subarcs of $\alpha$ and $\beta$ that make up $\bar D\setminus D$ is the number of \emph{sides} of the complementary component $D$, so $D$ is a bigon if and only if its number of sides is equal to $2$.
\end{definition}

\begin{lemma}\label{trivial intersections}
Let $\alpha,\beta\in \mathcal{C}^{\dagger}_{\varepsilon}(S^{2})$. Assume that $\alpha\pitchfork \beta$ and that there are at most two complementary components of $\alpha\cup \beta$ that are not bigons. Then there is an equator adjacent to both $\alpha$ and $\beta$ and so $d_{\mathcal{C}^{\dagger}_{\varepsilon}(S^{2})}(\alpha,\beta)\leq 2$.

\end{lemma}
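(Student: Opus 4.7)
The hypothesis combined with Euler's formula forces $k=|\alpha\cap\beta|\geq 2$: indeed, the disjoint case $\alpha\cap\beta=\emptyset$ produces three complementary components (two disks and an annulus), none of which are bigons, contradicting the hypothesis. So $S^2\setminus(\alpha\cup\beta)$ has $k+2$ complementary disks, at least $k$ of which are bigons, and in particular at least one bigon $B$ exists.

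\textbf{First I would pick} such a bigon $B$, with corners $p,p'$ and bounded by arcs $a\subseteq\alpha$, $b\subseteq\beta$, and build a candidate curve $\gamma_0 := \partial N(\bar B)$ as the boundary of a sufficiently thin tubular neighborhood of $\bar B$. A direct local analysis at the corners $p,p'$ shows that $\gamma_0$ is a smooth simple closed curve transverse to $\alpha\cup\beta$ with $|\gamma_0\cap\alpha|=|\gamma_0\cap\beta|=2$: at each corner $\gamma_0$ crosses from the side of $\alpha$ opposite to $B$, through the region diagonally opposite to $B$, over to the side of $\beta$ opposite to $B$, picking up exactly one crossing with $\alpha$ and one with $\beta$. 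The disk bounded by $\gamma_0$ on the $B$-side has area approximately $\area(B)$, so $\gamma_0$ is generally not itself an equator.

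\textbf{Next I would deform} $\gamma_0$ through the space $\mathcal{F}$ of smooth simple closed curves on $S^2$, transverse to $\alpha\cup\beta$, intersecting $\alpha$ in exactly two points on the same pair of $\alpha$-arcs as $\gamma_0$ (and likewise for $\beta$), all in the same cyclic order. On $\mathcal{F}$ the enclosed area is a continuous function, so by the intermediate value theorem it suffices to exhibit two members of $\mathcal{F}$ whose enclosed areas bracket $\tfrac{1}{2}$. Shrinking $\gamma_0$ inside $N(\bar B)$ one drives the enclosed area towards $0$; conversely one may grow $\gamma_0$ outward into the four regions $R_a, R_b, R_{op}(p), R_{op}(p')$ bordering $\bar B$ at the two corners.

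\textbf{The main obstacle} is showing that this outward growth can actually sweep the enclosed area past $\tfrac{1}{2}$ without introducing extra transverse intersections with $\alpha\cup\beta$. This is precisely where the at-most-two-non-bigon hypothesis is essential: each region accessible from $\gamma_0$ is either itself a bigon, which can be absorbed by sliding across its other edge so that the two crossings introduced at one corner cancel with the two lost at the opposite corner, or else it lies among the at most two non-bigon regions, which together account for area $1-\sum_i \area(B_i)$ and which can be freely expanded into until a further arc of $\alpha\cup\beta$ is encountered. A careful combinatorial induction on the sequence of absorbed regions produces a continuous path in $\mathcal{F}$ whose enclosed area sweeps through $\tfrac{1}{2}$. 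The resulting equator $\gamma$ is adjacent to both $\alpha$ and $\beta$, giving the path $\alpha$–$\gamma$–$\beta$ and hence $d_{\mathcal{C}^{\dagger}_{\varepsilon}(S^{2})}(\alpha,\beta)\leq 2$.
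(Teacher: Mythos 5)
Your overall strategy coincides with the paper's: build a one-parameter family of simple closed curves, each meeting $\alpha$ and $\beta$ exactly twice transversely, whose enclosed area sweeps from below $\tfrac12$ to above $\tfrac12$, and apply the intermediate value theorem to extract an equator adjacent to both. The starting point (a thin neighbourhood of a smallest bigon) and the endpoint (engulfing the bigons plus part of a non-bigon region) also match the paper's nested disks $N_1\subset\dots\subset N_{I+1}$.

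However, the step you label ``the main obstacle'' is the actual content of the lemma, and your resolution of it is asserted rather than proved. The missing ingredient is a classification of the configuration $\alpha\cup\beta$ up to homeomorphism of $S^2$. The paper obtains this by passing to the two dual trees (one for each component of $S^2\setminus\alpha$): the hypothesis of at most two non-bigon regions means at most two non-leaves in total, while each tree has at least one non-leaf once $|\alpha\cap\beta|>2$, so each tree is a star. Consequently $\beta$ weaves across $\alpha$ in a ``necklace'': the bigons are consecutive along both $\alpha$ and $\beta$, alternating sides of $\alpha$, with exactly one non-bigon region on each side. It is only this global structure that guarantees (a) the bigons can be absorbed \emph{in cyclic order} so that each $\partial N_j$ still crosses $\alpha$ and $\beta$ exactly twice -- your local ``cancel two crossings at one corner against two at the other'' move composes correctly precisely because consecutive bigons share corners along the necklace -- and (b) the sweep can terminate with area $>\tfrac12$, since all bigons together with almost all of the \emph{larger} non-bigon region can be engulfed, and the complement (the smaller non-bigon region) then has area strictly less than $\tfrac12$. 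Your ``careful combinatorial induction on the sequence of absorbed regions'' is exactly where these two facts must be established; without first pinning down the necklace structure, it is not clear the absorption order exists, nor that the accessible area ever exceeds $\tfrac12$ (a priori the process might only reach the smaller non-bigon region). Also, as a minor point, the disjoint case is not vacuous in the paper's convention ($\pitchfork$ includes disjointness) and should be dispatched directly rather than excluded, though this does not affect the substance.
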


\begin{proof}

The lemma is immediate if $\alpha$ and $\beta$ are disjoint, so from now on, suppose that they intersect.
Let us write $I=|\alpha\cap\beta|$. Because $S^2\setminus\alpha$ has two components, we have that $I>0$ is even.
If $I=2$ the conclusion of the lemma is immediate by choosing another equator in a small neighbourhood of $\alpha$, that maintains exactly two transverse intersections with both $\alpha$ and $\beta$. 
From now on we assume that $I\geq 4$.

We consider an arbitrary connected component $D$ of $S^2\setminus \alpha$, which is homeomorphic to an open disk by the Jordan--Schoenflies theorem.
Each connected component of $\beta\setminus \alpha$ is an (open) arc whose closure is a closed arc with two distinct endpoints in $\alpha$.
There is a tree $T=T_{\alpha,\beta}$, dual to $\beta$, whose vertices are the connected components of $D\setminus \beta$, and whose edges span $R_1$ and $R_2$ (distinct components of $D\setminus\beta$) if there is a connected component $b\subset \beta\setminus \alpha$ such that $b\subset \bar R_1\cap \bar R_2$. 
 Then the number of edges of $T$ is equal to $\frac{I}{2}$.

The degree of a vertex of the dual tree $T$ is one half of its number of sides of the corresponding complementary component of $\alpha\cup\beta$. 
The \emph{leaves} of $T$ (the vertices of degree $1$) therefore correspond to the bigons of $\alpha\cup\beta$ that are contained in $D$. 
In particular, the assumption that there are at most two non-bigon regions implies that $T$ has at most two non-leaves. 
When $I>2$, the number of edges of $T$ is at least $3$, in which case it is immediate that $T$ will have at least $1$ non-leaf. 
But there are two dual trees: one for each complementary component of $\alpha$. We conclude that each dual tree has at most one non-leaf when $I>2$.

When $I>2$, it is therefore the case that $T$ is exactly the graph where there is exactly one vertex of degree $\frac{I}{2}$, and all other vertices are leaves. 
(As an aside, when $I=2$, $T$ is precisely the tree with two vertices and one edge.)

Now, we are able to deduce $\alpha\cup\beta$ up to homeomorphism on $S^2$. 
Indeed, knowing the isomorphism type of $T$ determines $\beta\cap D$ up to homeomorphisms of $D$, for any complementary component $D$ of $\alpha$.
However, there are two complementary components $D_1$ and $D_2$ of $\alpha$, and how they glue together can affect the homeomorphism type of $\alpha\cup\beta$.
Fortunately, we know that $\beta$ is connected, which limits the possible ways in which we glue together the disks $D_1$ and $D_2$.
In any case, we directly observe that $\alpha\cup\beta$ is the same up to homeomorphism of $S^2$.
An example of this for $I=10$ is given in Figure~\ref{fig: Two balanced curves with only two non-bigon complementary regions}.

\begin{figure}[H]
    \centering
    \includegraphics[width=0.33\textwidth]{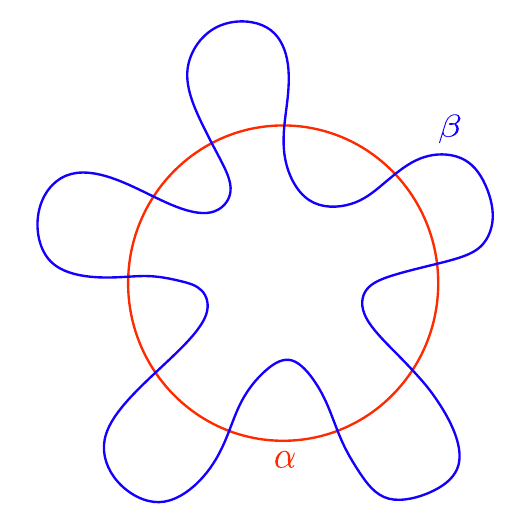}
    \caption{Two $\varepsilon$-balanced curves with only two non-bigon complementary regions}
    \label{fig: Two balanced curves with only two non-bigon complementary regions}
\end{figure}
If we orient $\alpha$, then we obtain an ordering (not necessarily unique!) of the bigons of $\alpha\cup\beta$, namely $B_1, B_2, \dots, B_I$, by the sequence of subarcs of $\alpha$ that the $\bar B_i$ contain.
Pick the complementary component $D$ of $S^2\setminus\alpha$ that contains $B_2$ not $B_1$.
Then $D$ contains $B_2, B_4, \dots, B_I$, and write $D'$ for the non-bigon region in $D$.

For every $j$ such that $1\leq j \leq I$, we may consider a closed disk $N_j$ that contains $B_1\cup\dots\cup B_j$.
We can choose $N_j$ to have an area which is arbitrarily close (but not equal) to the measure of $B_1\cup\dots\cup B_j$. 
Moreover, we can ensure that $N_k\subset N_{k+1}$.
We define $N_{I+1}$ a little differently, namely, we choose $N_{I+1}$ to be such that $N_I\subset N_{I+1}$, but we can arrange $N_{I+1}$ to contain an arbitrary amount of the measure of $D'$ (without containing all of it), see Figure~\ref{fig: The construction of the balanced curve}(c).
\begin{figure}[H]
    \centering
    \includegraphics[width=1.0\textwidth]{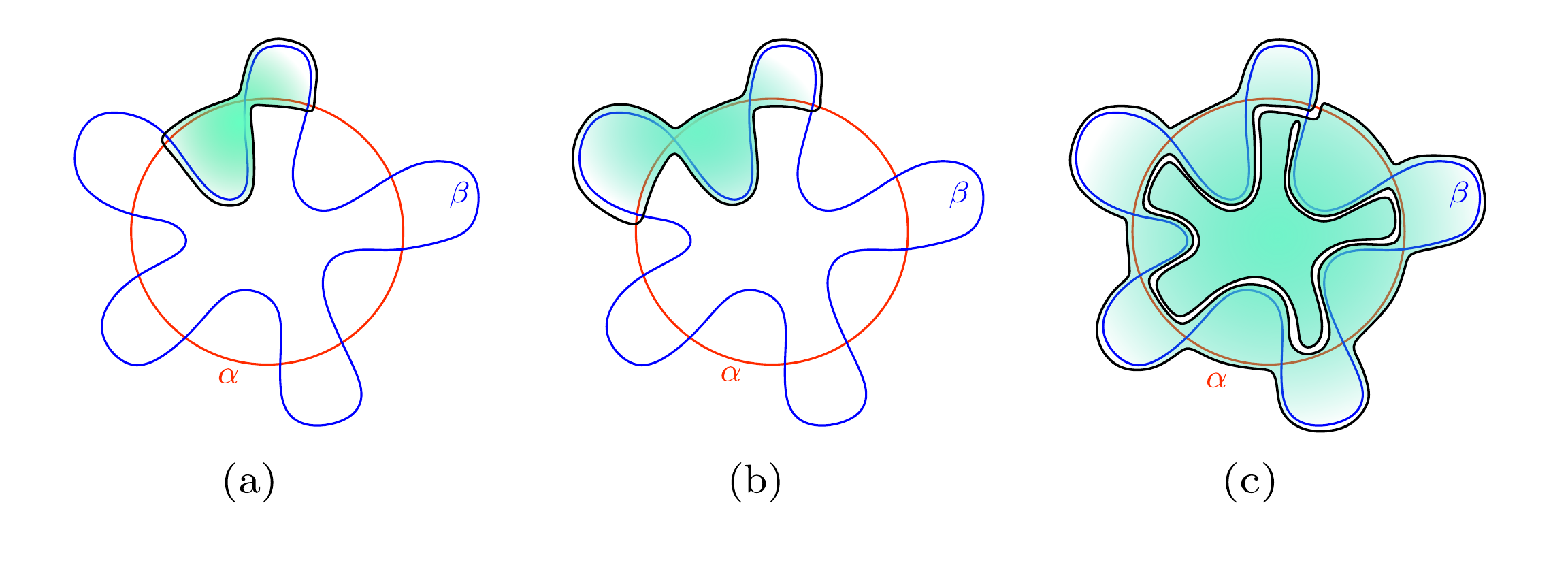}
    \caption{The construction of the $\varepsilon$-balanced curve}
    \label{fig: The construction of the balanced curve}
\end{figure}

Now, since $I\geq 4$, we could have chosen $B_1$ to have been a smallest area bigon to begin with, and therefore its area can be assumed to be strictly less than $\frac{1}{2}$, and so we may arrange $N_1$ to have area strictly less than $\frac{1}{2}$.
We can also ensure that $N_{I+1}$ has area greater than $1-\varepsilon\geq \frac{1}{2}$, because the remaining complementary component of $\alpha\cup\beta$ has area at most $\varepsilon$, see Figure~\ref{fig: The construction of the balanced curve}(c).

Since $N_1\subset\dots\subset N_{I+1}$ is a nested sequence of closed disks on $S^2$, it follows that $\partial N_1$ is isotopic to $\partial N_{I+1}$. Moreover, throughout such an isotopy, we can arrange the curves to have exactly two transverse intersections with both $\alpha$ and $\beta$ respectively. Since this process is continuous, at some point we obtain an equator, while maintaining exactly two transverse intersections with $\alpha$ and $\beta$, and the lemma is proved.
\end{proof}

\begin{lemma}\label{We can straighten the arc in side R}
Let $R\subset S^2$ be a closed topological disk with smooth boundary, and write $\partial R$ for its boundary. Suppose that $\mathrm{area}(R)\leq 1-\varepsilon$. Let $\gamma$ be an $\varepsilon$-balanced curve. Assume that $\gamma$ intersects $\partial R$ transversely with only two non-bigon complementary regions. Let $f\in \Ham(S^2)$ be supported on $R$. Then $d_{\mathcal{C}^{\dagger}_{\varepsilon}(S^{2})}(\gamma,f\gamma)\leq 4$.

\end{lemma}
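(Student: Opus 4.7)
The plan is to find an auxiliary $\varepsilon$-balanced curve $\delta$ at distance at most $2$ from both $\gamma$ and $f\gamma$, so that the triangle inequality gives $d_{\mathcal{C}^{\dagger}_{\varepsilon}(S^{2})}(\gamma, f\gamma) \leq 4$. My candidate is $\delta = \partial R^{\ast}$ for some topological disk $R^{\ast} \supseteq R$ whose boundary is $\varepsilon$-balanced. Because $f$ is supported on $R$, and hence on $R^{\ast}$, it fixes $\partial R^{\ast}$ pointwise; thus $f\gamma \cap \partial R^{\ast} = \gamma \cap \partial R^{\ast}$ and $f\gamma \cup \partial R^{\ast}$ is homeomorphic to $\gamma \cup \partial R^{\ast}$ via a homeomorphism restricting to the identity on $\partial R^{\ast}$, so transversality and the number of non-bigon complementary regions are preserved when passing from $\gamma$ to $f\gamma$.

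There are two cases. If $\mathrm{area}(R) \geq \varepsilon$, then $\partial R$ is already $\varepsilon$-balanced, since $\mathrm{area}(R) \leq 1-\varepsilon$ is given and $\varepsilon \leq \frac{1}{2}$, so I take $R^{\ast} = R$. If instead $\mathrm{area}(R) < \varepsilon$, I enlarge $R$ to a disk $R^{\ast} = R \cup \Omega$ of area exactly $\varepsilon$, where $\Omega$ is a closed topological disk contained in $(S^{2} \setminus R) \setminus \gamma$ glued to $R$ along an arc of $\partial R$ disjoint from $\gamma$. With this choice, $R^{\ast}$ is a topological disk, $\partial R^{\ast} \cap \gamma = \partial R \cap \gamma$ with the same transverse intersections, and $\gamma \cup \partial R^{\ast}$ has the same combinatorial structure as $\gamma \cup \partial R$; in particular, still at most two non-bigon complementary regions.

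The existence of $\Omega$ is the key step. Writing $S^{2} \setminus \gamma = U \sqcup V$ with $\mathrm{area}(U), \mathrm{area}(V) \geq \varepsilon$, one has $\mathrm{area}(U \setminus R), \mathrm{area}(V \setminus R) \geq \varepsilon - \mathrm{area}(R)$. A dual-tree analysis on each side of $\partial R$, in the spirit of the proof of Lemma~\ref{trivial intersections}, uses the two-non-bigon hypothesis to force the dual tree on each side to be extremely constrained: a star with a unique internal vertex when $|\gamma \cap \partial R| \geq 4$, and a single edge when $|\gamma \cap \partial R| = 2$. Combined with the area estimates above, this yields a single connected component $Q$ of $(S^{2} \setminus R) \setminus \gamma$ with $\mathrm{area}(Q) \geq \varepsilon - \mathrm{area}(R)$, taken to be the unique non-bigon of $S^{2}\setminus R$ when $|\gamma \cap \partial R| \geq 4$ and the larger bigon when $|\gamma \cap \partial R| = 2$. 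In either case $Q$ borders $\partial R$ along at least one arc disjoint from $\gamma$, and by continuity of area I can inscribe $\Omega \subset Q$ of area exactly $\varepsilon - \mathrm{area}(R)$, attached along a sub-arc of such an arc.

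Applying Lemma~\ref{trivial intersections} to the pairs $(\gamma, \partial R^{\ast})$ and $(f\gamma, \partial R^{\ast})$ then yields $d_{\mathcal{C}^{\dagger}_{\varepsilon}(S^{2})}(\gamma, \partial R^{\ast}) \leq 2$ and $d_{\mathcal{C}^{\dagger}_{\varepsilon}(S^{2})}(f\gamma, \partial R^{\ast}) \leq 2$, from which the claimed bound $d_{\mathcal{C}^{\dagger}_{\varepsilon}(S^{2})}(\gamma, f\gamma) \leq 4$ follows by the triangle inequality. The hard part will be the existence step for $\Omega$: without the two-non-bigon hypothesis the components of $(S^{2} \setminus R) \setminus \gamma$ could fragment into arbitrarily many small pieces, and the $\varepsilon$-balancedness of $\gamma$ alone would not suffice to locate a single component of sufficient area that borders $\partial R$ via a $\gamma$-disjoint arc.
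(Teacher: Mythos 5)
Your proof is correct and follows essentially the same strategy as the paper's: pass to an auxiliary $\varepsilon$-balanced curve bounding an enlarged copy of $R$ (taken to be $\partial R$ itself when $\mathrm{area}(R)\geq\varepsilon$), note that it is fixed by $f$ and meets both $\gamma$ and $f\gamma$ with at most two non-bigon regions, and conclude via Lemma~\ref{trivial intersections} and the triangle inequality. The only difference is cosmetic — the paper enlarges $R$ by an ambient isotopy fixing $\partial R\cap\gamma$ until the area reaches $\tfrac12$, whereas you attach a disk $\Omega$ disjoint from $\gamma$ to reach area exactly $\varepsilon$ — and your dual-tree justification that a single complementary component outside $R$ has enough area is a valid (indeed more explicit) account of the room-to-grow step the paper leaves implicit.
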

\begin{proof}

We consider two cases.
\begin{itemize}
    \item Case (i): If $\varepsilon\leq\mathrm{area}(R)\leq 1-\varepsilon$, then we consider $\eta=\partial R$, which is $\varepsilon$-balanced and intersects each of $\gamma$ and $f\gamma$ with only two non-bigon regions.
    By Lemma \ref{trivial intersections} we know that $d_{\mathcal{C}^{\dagger}_{\varepsilon}(S^{2})}(\gamma,\eta)\leq 2$ and $d_{\mathcal{C}^{\dagger}_{\varepsilon}(S^{2})}(f\gamma,\eta)\leq 2$. Then we have $d_{\mathcal{C}^{\dagger}_{\varepsilon}(S^{2})}(\gamma,f\gamma)\leq 4$ by the triangle inequality.
    
    \item Case (ii): $\mathrm{area}(R) < \varepsilon$. It's enough to argue that there is an equator $\eta$ in the complement of the interior of $R$, which intersects $\gamma$ with at most two non-bigon regions. Indeed, starting with the curve $\partial R$, we may enlarge $R$ via ambient isotopies that do not preserve area but fixes each point of $\partial R\cap \gamma$ throughout the isotopy. Throughout this isotopy, the homeomorphism class of $\partial R\cup \gamma$ does not change, but the area of $R$ increases, and eventually becomes $\frac{1}{2}$, giving the required equator. Hence, as we did before, by Lemma \ref{trivial intersections}, we get $d_{\mathcal{C}^{\dagger}_{\varepsilon}(S^{2})}(\gamma,f\gamma)\leq 4$.
\end{itemize}
\end{proof}

\subsection{Proof of connectivity}\label{Subsection-connectivity}
In this section, we will prove the connectivity of $\mathcal{C}^{\dagger}_{\varepsilon}(S^{2})$, for any choice of $\varepsilon$ such that $0<\varepsilon\leq \frac{1}{2}$.

The first step is to ensure that $\pi_{\alpha'}(\beta')$ has finite diameter with a universal upper bound, where $\alpha'\subset \alpha$ and $\beta'\subseteq \beta$ are admissible for $\varepsilon$.

To do this, we start by establishing some structure.
Recall the dual tree $T=T_{\alpha',\beta'}$ for the pair $(\alpha',\beta')$.
We will endow almost all of the edges of the dual tree with a direction as follows.
We note that by definition, there is a one-to-one correspondence between the edges of $T$ and the connected components of $\beta'\setminus \alpha'$ which separate $S^2\setminus\alpha'$ into two.
Each edge $e\in E(T_{\alpha',\beta'})$ gives a partition of $S^2\setminus \alpha'=B_1\cup B_2$, into two disks $B_1$ and $B_2$.
Let $e$ connect the two vertices $v_1$ and $v_2$ such that the region corresponding to $v_i$ is contained in $B_i$.
We assign a direction to $e$ as follows:
There is an arrow from $v_1$ to $v_2$ if and only if $\mathrm{area}(B_1)<\mathrm{area}(B_2)$.

Only at most one edge does not get a direction, because the components $\beta'\setminus\alpha'$ are disjoint and have a non-zero amount of area between them. In the case where there is such an edge, endow an arbitrary direction to this edge. We now have a directed dual tree.

We claim that the directions on $T$ are equal to the following: that there is a choice of vertex $v$, such that every edge $e$ of $T$ points to $v$.
To see why this is true, pick an arbitrary edge $e$ of $T$. 
Then $e$ has an initial vertex $w$ and a terminal vertex $w'$, whose corresponding regions are contained in $B$ and $B'$ respectively, where $B$ and $B'$ are the open disks of $S^2\setminus\alpha'$ in the complement of $e$.
Because the area of $B'$ is at least $\frac{1}{2}$, it is immediate from the definition of the directions of $T$ that every edge in the component of $T\setminus e$ corresponding to $B$ is directed towards $w'$ (and hence $w$).
In particular, there is at most one edge pointing away from $w$, namely $e$, because all the other edges are in $B$.
Therefore every vertex $w$ of $T$ has at most edge pointing away from it.
Finally, the choice of vertex $v$ (as mentioned at the start of this paragraph) is determined by the terminal vertex of any maximal directed embedded path in $T$.
It is clear that $v$ exists.
Since $T$ is connected, this choice of $v$ must be unique.
We call $v$ the \emph{central} vertex of $T$.

We are now ready to prove

\begin{lemma}\label{projection gives you uniformly bounded set}
If $\alpha'$ and $\beta'$ are admissible for $\varepsilon$, then $\pi_{\alpha'}(\beta')$ is a nonempty set and $\mathrm{diam}(\pi_{\alpha'}(\beta'))\leq 8$.

\end{lemma}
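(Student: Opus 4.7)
The proof splits into nonemptiness and the diameter bound. When $\alpha' = \alpha$ the projection is defined to be $\{\alpha\}$, so both are trivial; assume henceforth that $\alpha'$ is a proper subarc of $\alpha$, so that $D := S^2 \setminus \alpha'$ is a single open disk.

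For nonemptiness, I construct $\gamma_0 \in \pi_{\alpha'}(\beta')$ explicitly using the central vertex $v$ of the dual tree $T = T_{\alpha',\beta'}$ established above. The key feature is that every subtree region $B_w$ hanging off $v$ satisfies $\mathrm{area}(B_w) < \tfrac{1}{2}$. Starting from a small closed disk $D_0$ in the interior of the central region $R_v$, I produce a continuous family of closed disks $D_t \subset D$ for $t \in [0,1]$: first $D_t$ expands to nearly fill $R_v$; then, following a depth-first traversal of $T$ rooted at $v$, regions are absorbed one at a time by pushing the boundary $\partial D_t$ across the single $\beta'$-component separating the currently absorbed subtree from the next region. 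Throughout, the absorbed set of vertices forms a subtree of $T$, so each $\beta'$-component is crossed by $\partial D_t$ either $0$ or $2$ times, and $\partial D_t$ stays disjoint from $\alpha'$. Since $\mathrm{area}(D_t)$ varies continuously from near $0$ to near $1$, the intermediate value theorem furnishes some $t$ with $\mathrm{area}(D_t) = \tfrac{1}{2}$; the equator $\gamma_0 := \partial D_t$ lies in $\pi_{\alpha'}(\beta')$.

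For the diameter bound, the plan is to prove $d_{\mathcal{C}^{\dagger}_{\varepsilon}(S^{2})}(\gamma, \gamma_0) \leq 4$ for every $\gamma \in \pi_{\alpha'}(\beta')$, whence the triangle inequality gives $\mathrm{diam}(\pi_{\alpha'}(\beta')) \leq 8$. I will apply Lemma~\ref{We can straighten the arc in side R} (the straightening lemma) via a Hamiltonian $f \in \Ham(S^2)$ supported on a closed topological disk $R \subset S^2$ with $\mathrm{area}(R) \leq 1-\varepsilon$, chosen so that $f(\gamma_0) = \gamma$ and so that $\gamma_0 \cap \partial R$ has only two non-bigon complementary regions. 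The disk $R$ is assembled from the combinatorial "region of disagreement" between $\gamma$ and $\gamma_0$ inside $D$, and the essential tool for fitting $R$ within the area budget is admissibility of $(\alpha', \beta')$ together with the fact that both curves cross each $\beta'$-component at most twice. The main obstacle will be precisely this diameter estimate, since a naive choice such as a neighborhood of $\overline{D_{\gamma} \cup D_{\gamma_0}}$ may have area close to $1$ when $\gamma$ and $\gamma_0$ overlap little; one must use admissibility together with the subtree structure in $T$ induced by each curve to place $R$ inside a suitable complementary disk, whereas nonemptiness reduces to a clean intermediate-value argument once the central vertex is in hand.
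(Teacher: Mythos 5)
The nonemptiness half of your argument is sound and essentially matches the paper's (the paper takes a regular neighbourhood of an embedded copy of the dual tree, which meets every complementary region and crosses each $\beta'$-arc at most twice, then adjusts areas within complementary regions to reach $\tfrac12$; your depth-first absorption plus the intermediate value theorem achieves the same thing, and you do not actually need the central vertex for it). The gap is in the diameter bound. You propose to find, for each $\gamma\in\pi_{\alpha'}(\beta')$, a single disk $R$ with $\mathrm{area}(R)\leq 1-\varepsilon$ supporting $f$ with $f(\gamma_0)=\gamma$ and then to invoke Lemma~\ref{We can straighten the arc in side R}. You correctly identify the obstacle but do not overcome it, and this route fails in general: for an $f$ supported on $R$ to carry $\gamma_0$ to $\gamma$, the disk $R$ must contain every complementary region of $\gamma\cup\gamma_0$ except at most one, so $\mathrm{area}(R)\geq 1-\max\{\mathrm{area}(U)\}$ over complementary regions $U$ of $\gamma\cup\gamma_0$. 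Two elements of $\pi_{\alpha'}(\beta')$ are only constrained to avoid $\alpha'$ and to cross each component of $\beta'\setminus\alpha'$ at most twice; nothing prevents them from intersecting each other so much that every complementary region of their union has area less than $\varepsilon$ (admissibility controls the regions of $\alpha'\cup\beta'$, not of $\gamma\cup\gamma_0$). In that situation no admissible $R$ exists and $d_{\mathcal{C}^{\dagger}_{\varepsilon}(S^{2})}(\gamma,\gamma_0)\leq 4$ is out of reach by this method.

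The missing idea is that one should not try to transport $\gamma_0$ to $\gamma$ at all. The paper instead builds an intermediate $\varepsilon$-balanced curve $\partial D$, where $D$ is assembled from the central region of the dual tree together with enough adjacent subtree regions $D_1,\dots,D_m$ to reach area exactly $1-\varepsilon$ (possible because centrality gives $\mathrm{area}(D_i)\leq\tfrac12$ for every subtree, and admissibility gives the base case $m=0$). Since $\partial D$ consists of subarcs of the $\beta'$-components together with arcs in a thin neighbourhood of $\alpha'$, condition (3) in Definition~\ref{projection} forces any $\gamma_i\in\pi_{\alpha'}(\beta')$ to meet $\partial D$ in at most two points per $\beta'$-arc, each such pair cutting off a bigon inside the corresponding subtree region; hence Lemma~\ref{trivial intersections} gives $d_{\mathcal{C}^{\dagger}_{\varepsilon}(S^{2})}(\gamma_i,\partial D)\leq 2$. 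Lemma~\ref{We can straighten the arc in side R} is used only to normalise $\gamma_2$ inside the single region $D_{m+1}$, which has area at most $\tfrac12\leq 1-\varepsilon$ automatically; this contributes the remaining $4$ in $2+2+4=8$. In short, the disk of controlled area must come from the dual-tree decomposition of $S^2\setminus(\alpha'\cup\beta')$, not from the configuration of $\gamma$ and $\gamma_0$.
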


\begin{proof}
To see why it is nonempty, there is an embedding of the dual tree $T_{\alpha',\beta'}$ where each vertex is mapped to its corresponding complementary region of $\alpha'\cup\beta'$ and each edge crosses $\beta'$ exactly once transversely at the corresponding component of $\beta'\setminus\alpha'$. Therefore, we may choose a closed disk $N$ that contains the image within its interior, but such that $\partial N$ intersects each component of $\beta'\setminus\alpha$ at most twice and transversely. We have that $N$ meets every complementary component, and therefore, by isotoping $N$ away from $\beta'$ to be larger or smaller within in complementary region, we can ensure that $\partial N$ is an equator and therefore $\partial N\in \pi_{\alpha'}(\beta')$.

Now we prove the upper bound on the diameter. Let $I$ be the central vertex of the dual tree $T=T_{\alpha',\beta'}$, and let $D_1,\dots,D_k$ be the connected components of $T\setminus I$.
By definition of the central vertex of $T$, we have $\mathrm{area}(D_i)\leq \frac{1}{2}$ for each $D_i$.
Now write $D_0=I$ and let $0\leq m \leq k$ be maximal such that 
$$\sum_{i=0}^m \mathrm{area}(D_i)\leq 1-\varepsilon.$$
Note that $m$ exists because the above is satisfied for $m=0$ because $\alpha'$ and $\beta'$ are admissible for $\varepsilon$.
Also, since the total area is $1$, which is greater than $1-\varepsilon$, by maximality of $m$ we have 
$$1-\varepsilon < \sum_{i=0}^{m+1}\mathrm{area}(D_i),$$ and in particular we have $m<k$.

Now, if $\sum_{i=0}^m \mathrm{area}(D_i)= 1-\varepsilon,$ then the argument proceeds as follows. 
Given any $\gamma_1,\gamma_2\in\pi_{\alpha'}(\beta')$, we pick a sufficiently small closed neighbourhood $N$ of $\alpha'$ (which is a disk) such that $\partial N$ is disjoint from $\gamma_1$ and $\gamma_2$, and such that $\partial N$ intersects each component of $\beta'\setminus\alpha'$ transversely and at most twice.
Let $b_{m+1},\dots,b_k$ be the components of $\beta'\setminus\alpha'$ that correspond to each $D_i$, this is a nonempty collection of arcs because $m<k$.
We now consider $\overline {D_0\cup\dots\cup D_m}$ which has area $1-\varepsilon$, but this may not be a disk. 
To avoid this issue, we can consider instead $\overline {D_0\cup\dots\cup D_m}\setminus \mathrm{int}N$, which is a closed disk, but it has area strictly less than $1-\varepsilon$ (which will not be sufficient when $\varepsilon=\frac{1}{2}$). 
To fix this, we consider a sufficiently smaller neighbourhood $N'\subset N$ of $\alpha'$. 
The upshot here, is that we consider the closed disk $\overline {D_0\cup\dots\cup D_m}\setminus \mathrm{int}N'$, which can have area strictly less but arbitrarily close to $1-\varepsilon$, but then by making the disk $\overline {D_0\cup\dots\cup D_m}$ larger by adding a small disk of $N\setminus N'$ only, we can make it have area exactly equal to $1-\varepsilon$.
Let us write $D$ for this disk, then $\gamma=\partial D$ is $\varepsilon$-balanced.
Moreover, $\gamma$ can be decomposed into arcs: subarcs of $b_i\cap (S^2\setminus N)$, and subarcs contained only in $N$.
It follows that $\gamma_1$ and $\gamma_2$ only intersect $\gamma$ at the $b_i\cap(S^2\setminus N)$.
Moreover, if $\gamma_i$ intersects $b_j$ then it does so exactly twice, and forms a bigon within the open disk corresponding to $D_j$.
It follows that $\gamma_i$ and $\gamma$ intersect transversely and only have at most two non-bigon complementary regions, so Lemma~\ref{trivial intersections} applies, and we are done.

The remaining case is $\sum_{i=0}^m \mathrm{area}(D_i) < 1-\varepsilon.$
Given any $\gamma_1,\gamma_2\in\pi_{\alpha'}(\beta')$, we wish to use the proof above, but there is not  enough area (for the argument to work for $\varepsilon=\frac{1}{2}$), and so we need to expand our disk $D$ into $D_{m+1}$ to get enough area.
However, a complication that can arise is that there may be arbitrarily many intersections between $\gamma_1$ and $\gamma_2$ inside $D_{m+1}$, which prevents us from completing the proof as above.
To remove the complication, we wish to apply $f$ supported on a disk $R$ with $\mathrm{area}(R)<1-\varepsilon$ so that $\gamma_1$ and $f\gamma_2$ intersect zero times or twice transversely within $D_{m+1}$. 
See Figure~\ref{fig: Case analysis} for a picture.
From here, it is not hard to see that there is a choice of $D$ that can be made so that $\mathrm{area}(D)=1-\varepsilon$ (by expanding $D$ into $D_{m+1}$) and Lemma~\ref{trivial intersections} can be applied to $\gamma_1$ and $\gamma=\partial D$, $\gamma=\partial D$ and $f\gamma_2$. 
Finally, $f\gamma_2$ and $\gamma_2$ are close by Lemma~\ref{We can straighten the arc in side R}.

\begin{figure}[H]
    \centering
    \includegraphics[width=1.0\textwidth]{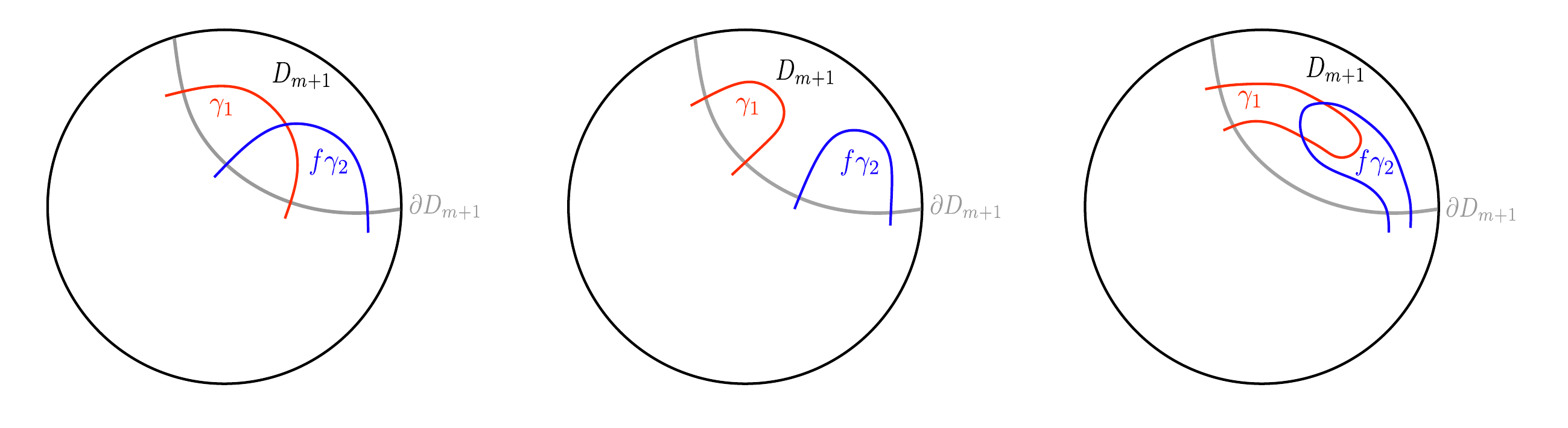}
    \caption{Case analysis}
    \label{fig: Case analysis}
\end{figure}

So to complete the proof, it remains to provide the closed disk $R$, the area-preserving map $f$ supported on $R$, so that $\gamma_1$ and $f\gamma_2$ are disjoint or intersect twice transversely within $D_{m+1}$.
In fact, the order in which we do this is different.
We will describe $R$ and $\gamma$, such that there exists area-preserving $f$ supported on $R$ such that $f\gamma_2=\gamma$.
To do this, we let $R'$ be a closed disk contained in $D_{m+1}$ (recall $D_{m+1}$ is an open set), which contains almost all of the area of $D_{m+1}$. 
We may choose a closed disk $R$ containing $R'$ within its interior, such that the interior of $R$ contains $b_{m+1}\setminus N$, $\gamma_1\cap D_{m+1}$ and $\gamma_2\cap D_{m+1}$, but such that $\mathrm{area}(R)<1-\varepsilon$.
This is possible because $\mathrm{area}(D_{m+1})\leq \frac{1}{2}\leq 1-\varepsilon$.

First we apply an area-preserving $f_1$ supported on $R$ so that $\gamma_1\cap f_1\gamma_2\cap b_{m+1}$ are disjoint, while $f_1\gamma_2\in\pi_{\alpha'}(\beta')$. This can be achieved by perturbing $\gamma_2$ within $R$.

Then we want to replace $ f_1 \gamma_2$ by another equator $\gamma$, such that $\gamma$ and $f_1 \gamma_2$ coincide outside of $R$, and coincide within a neighbourhood of $\partial R$, but such that $\gamma$ and $\gamma_1$ intersect as shown in Figure~\ref{fig: Case analysis}.
To prove this, it suffices to choose positive real numbers to assign to each complementary component of $\gamma_1\cup\gamma$ within $R$, that realise the correct areas of the components of $R\setminus\gamma_1$ and $R\setminus \gamma$ (the latter having to match $R\setminus f_1 \gamma_2$).
So let the areas of the complementary regions in $R$ of $\gamma_1$ be given by $A_1$ and $A_2$, and similarly $A_3$ and $A_4$ for $f_1\gamma_2$.
Write $r$ for the area of $R$.
After relabelling, and abusing notation by writing $A_i=\mathrm{area}(A_i)$, we can assume that $A_1=\frac{r}{2}+t$, $A_2=\frac{r}{2}-t$, $A_3=\frac{r}{2}+s$, and $A_4=\frac{r}{2}-s$, where $0\leq s,t < \frac{r}{2}$.

See Figure~\ref{fig: The choice of gamma} for the choice of $\gamma$, which will have the correct areas for the complementary components.
By picking $\delta>0$ small enough, we can ensure that each component is given a positive real area.
It is well known that $\gamma$ and $f_1 \gamma_2$ will satisfy $f_2f_1\gamma_2=\gamma$ for some choice of area-preserving map $f_2$ supported on $R$.
This completes the construction of the aforementioned $R$ and $f$. We are done by the earlier discussion.
\end{proof}

\begin{figure}[H]
    \centering
    \includegraphics[width=1.0\textwidth]{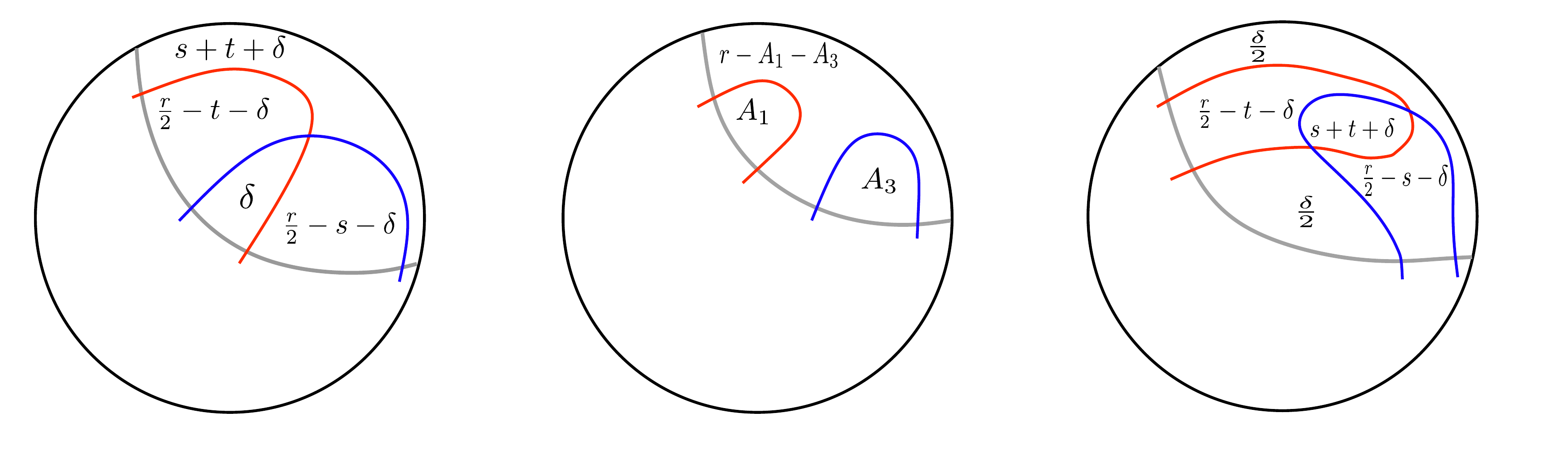}
    \caption{The choice of $\gamma$}
    \label{fig: The choice of gamma}
\end{figure}

The sets $\pi_{\alpha'}(\beta')$ in some sense provide ``quasi-''points of a path between $\alpha$ and $\beta$. The next step is to promote this to a ``quasi-''path. A first step of a quasi-path is the following

\begin{lemma}\label{projection of largest subarc}
Let $\alpha,\beta\in \mathcal{C}^{\dagger}_{\varepsilon}(S^{2})$ such that $\alpha\pitchfork\beta$. Let $\beta'\subseteq \beta$, and let $\alpha'\subset \alpha$ be a connected closed subarc of $\alpha$ such that $|\alpha'\cap\beta'|=|\alpha\cap \beta'|$. Suppose that $\alpha'$ and $\beta'$ are admissible for $\varepsilon$. Then there is an edge from a vertex of $\pi_{\alpha'}(\beta')$ to $\pi_{\alpha}(\beta')=\{\alpha\}$. On the other hand, if $\alpha'$ and $\beta$ are not admissible, then $d_{\mathcal{C}^\dagger_\varepsilon(S^2)}(\alpha,\beta)\leq 2$.

\end{lemma}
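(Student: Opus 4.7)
If $\alpha'=\alpha$ then $\pi_\alpha(\beta')=\{\alpha\}$ by definition and the first claim is immediate, so I shall assume $\alpha'\subsetneq\alpha$, writing $\alpha''=\alpha\setminus\alpha'$ for the complementary open arc, whose endpoints lie on $\alpha'$. The hypothesis $|\alpha'\cap\beta'|=|\alpha\cap\beta'|$ forces $\alpha''\cap\beta'=\emptyset$, so the interior of $\alpha''$ lies inside a single complementary component $u$ of $S^2\setminus(\alpha'\cup\beta')$ and cuts $u$ into two subdisks $u^+,u^-$. The goal is then to produce an equator $\gamma\in\pi_{\alpha'}(\beta')$ with $|\gamma\cap\alpha|\leq 2$; such a $\gamma$ is adjacent to $\alpha$ in $\mathcal{C}^\dagger_\varepsilon(S^2)$.

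My plan is to adapt the nonempty construction from the proof of Lemma~\ref{projection gives you uniformly bounded set}, with an additional step to control $\alpha''$. First I would embed the dual tree $T=T_{\alpha',\beta'}$ in $S^2\setminus\alpha'$, placing each vertex inside its component and arranging each edge to cross the corresponding component of $\beta'\setminus\alpha'$ once transversely. The routing of $T$ is flexible: I would place the vertex belonging to $u$ inside $u^+$ and route every edge incident to that vertex entirely within $u^+$ to its exit along $\partial u$. Then $N$, a sufficiently thin closed regular neighbourhood of the image of $T$, is a closed disk such that $\partial N$ is disjoint from $\alpha'$, disjoint from $\alpha''$, and crosses each component of $\beta'\setminus\alpha'$ at most twice transversely. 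Hence $\partial N$ already satisfies the intersection conditions of Definition~\ref{projection}, except that $\mathrm{area}(N)$ may not equal $\tfrac{1}{2}$.

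Next I would tune $\mathrm{area}(N)$ to exactly $\tfrac{1}{2}$ by isotoping $\partial N$ inside the complementary components of $\alpha'\cup\beta'$, an operation that preserves $\partial N\cap(\alpha'\cup\beta')$. Expanding $N$ freely in components other than $u$ and within $u^+$ keeps $\partial N$ disjoint from $\alpha''$ and can realize any area up to $1-\mathrm{area}(u^-)$. If this is at least $\tfrac{1}{2}$, an intermediate value argument stops at area $\tfrac{1}{2}$; otherwise I push a single finger of $N$ across $\alpha''$ into $u^-$, creating exactly two transverse intersections of $\partial N$ with $\alpha''$, and then continue expanding inside $u^-$ until hitting area $\tfrac{1}{2}$. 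Setting $\gamma=\partial N$ gives an equator in $\pi_{\alpha'}(\beta')$ with $|\gamma\cap\alpha|=|\gamma\cap\alpha''|\leq 2$, so $\gamma$ is adjacent to $\alpha$, proving the first claim.

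For the second claim, suppose $\alpha'$ and $\beta$ are not admissible: some component $C$ of $S^2\setminus(\alpha'\cup\beta)$ has area strictly greater than $1-\varepsilon$. Since $C$ is disjoint from $\beta$, it lies in one of the two complementary disks of $\beta$, each of area at most $1-\varepsilon$ because $\beta$ is $\varepsilon$-balanced; this is a contradiction. The hypothesis is therefore vacuous and the conclusion $d_{\mathcal{C}^\dagger_\varepsilon(S^2)}(\alpha,\beta)\leq 2$ holds trivially. The hard part of the argument will be the area-tuning step in the first claim: one must carefully justify that free growth in each component, together with a single controlled finger into $u^-$, jointly realizes every intermediate area (in particular $\tfrac{1}{2}$) while strictly preserving the intersection conditions $|\partial N\cap b|\leq 2$ for each $b\in\pi_0(\beta'\setminus\alpha')$ and $|\partial N\cap\alpha''|\leq 2$.
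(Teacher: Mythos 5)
Your treatment of the second clause is correct: since $\beta$ is $\varepsilon$-balanced, every component of $S^2\setminus(\alpha'\cup\beta)$ sits inside one of the two complementary disks of $\beta$, each of area at most $1-\varepsilon$, so the pair $(\alpha',\beta)$ is always admissible and the clause holds vacuously. (The paper instead exhibits an equator in the large complementary component meeting $\alpha$ twice along $\alpha\setminus\alpha'$; your observation is consistent with the fact that the lemma is only ever invoked in the admissible case.) For the first clause you take a genuinely different route from the paper: the paper produces the adjacent equator by pushing $\alpha$ off itself to one side and closing up through the ``gap'' along $\alpha''=\alpha\setminus\alpha'$, whereas you adapt the dual-tree/regular-neighbourhood construction from the nonemptiness part of Lemma~\ref{projection gives you uniformly bounded set}.

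However, your version of that construction has a genuine gap in the routing step. You place the vertex $v_u$ of $T_{\alpha',\beta'}$ corresponding to $u$ inside $u^+$ and assert that every edge incident to $v_u$ can be routed ``entirely within $u^+$ to its exit along $\partial u$.'' This is impossible in general: the endpoints of $\alpha''$ split the boundary circle of the disk $u$ into two arcs, one in $\partial u^+$ and one in $\partial u^-$, and a component $b$ of $\beta'\setminus\alpha'$ dual to an edge at $v_u$ may well lie on the $\partial u^-$ arc (e.g.\ when $\alpha''$ is a short arc of $\alpha$ between two intersection points with $\beta'$ and both adjacent regions of $\alpha\cup\beta'$ carry several such arcs). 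Any edge exiting through such a $b$ must cross $\alpha''$, and each such crossing contributes two points of $\partial N\cap\alpha''$; with two or more such edges you get $|\partial N\cap\alpha|\geq 4$ and lose adjacency to $\alpha$, and your additional area-tuning finger into $u^-$ only makes this worse. The fix stays within your framework: replace $T_{\alpha',\beta'}$ by the tree obtained by splitting $v_u$ into two vertices $v_{u^+}$ and $v_{u^-}$ joined by a single new edge dual to $\alpha''$ (equivalently, take the dual tree of $\alpha\cup\beta'$ rather than of $\alpha'\cup\beta'$), routing edges to arcs on $\partial u^{\pm}$ from $v_{u^{\pm}}$ respectively. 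The regular neighbourhood of this tree is still a disk, is disjoint from $\alpha'$, meets each $b\in\pi_0(\beta'\setminus\alpha')$ at most twice, meets $\alpha''$ exactly twice, and visits every complementary component of $\alpha\cup\beta'$, so the intermediate-value area argument then produces the required equator without any case split on $\mathrm{area}(u^-)$. As written, though, the routing claim is false and the proof does not go through.
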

\begin{proof}
\begin{figure}[H]
    \centering
    \includegraphics[width=0.5\textwidth]{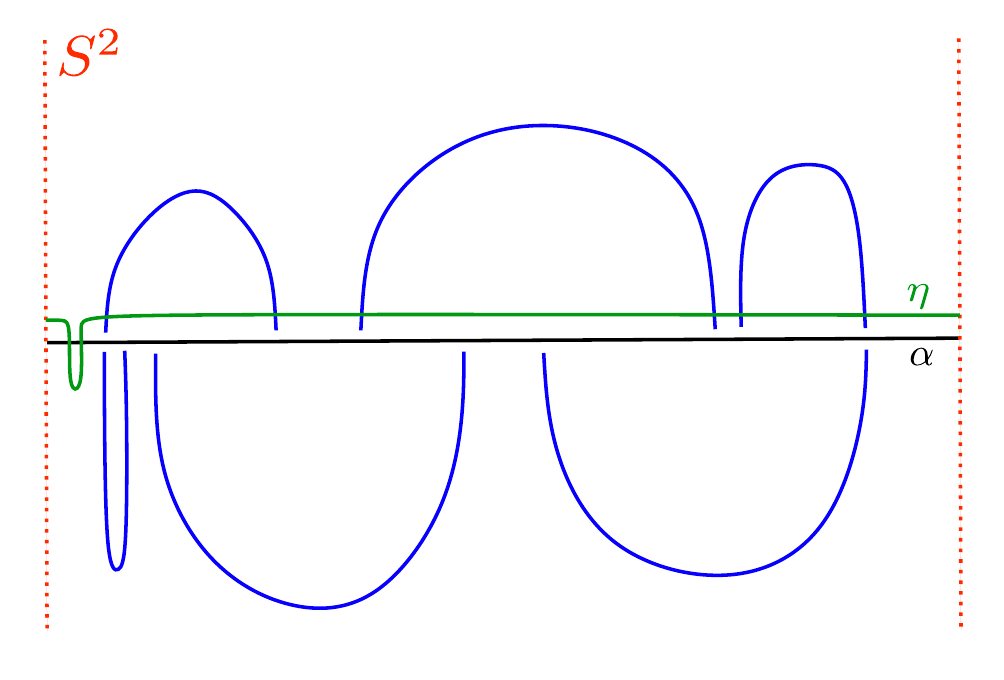}
    \caption{The first step of shrinking $\alpha$}
    \label{fig: The first step of shrinking alpha}
\end{figure}
Figure \ref{fig: The first step of shrinking alpha} gives a schematic picture. The black line is $\alpha$. In the figure, the endpoints of $\alpha$ are identical, since $\alpha$ is a smooth simple closed curve. The red dotted lines are identified as well. We know that $\alpha'\subset \alpha$ is a connected closed subarc of $\alpha$ such that $|\alpha'\cap\beta'|=|\alpha\cap \beta'|$. We can construct an $\varepsilon$-balanced curve $\eta\in \pi_{\alpha'}(\beta')$ such that $\eta \pitchfork \alpha$ and $|\eta\cap \alpha|=2$. Indeed, we can first slightly shift $\alpha$ upwards. 
Since there is a gap between the endpoint and the eliminating intersection point between $\alpha$ and $\beta'$ after shrinking, we can find an $\varepsilon$-balanced curve $\eta$ such that $\eta\pitchfork\alpha$ and $|\eta\cap \alpha|=2$. 
Moreover, $\eta\in \pi_{\alpha'}(\beta')$ since $\eta\cap \alpha' =\emptyset$ and $|\eta\cap b|\leq 2$ for every component $b$ of $\beta' \backslash\alpha'$. 

On the other hand, if $\alpha'$ and $\beta$ are not admissible, then we pick $\eta$ to be an equator within the component of $\alpha'$ and $\beta$ of area larger than $1-\varepsilon$, which intersects $\alpha$ exactly twice transversely along the subarc $\alpha\setminus \alpha'$.
\end{proof}

And now we address the next parts of a quasi-path:

\begin{lemma}\label{shrink beta to get a quasi path}
Let $\alpha,\beta\in \mathcal{C}^{\dagger}_{\varepsilon}(S^{2})$ such that $\alpha\pitchfork\beta$. Let $\beta'\subseteq \beta$, and let $\alpha'' \subset \alpha'$ be connected closed subarcs of $\alpha$ such that 
$$
|\beta'\cap \alpha''|=|\beta'\cap \alpha'|-1,
$$ and $\alpha''$ and $\beta'$ are admissible for $\varepsilon$.
Then $\pi_{\alpha'}(\beta')\cap\pi_{\alpha''}(\beta')\neq\emptyset$.

On the other hand, if $\alpha''$ and $\beta$ are not admissible for $\varepsilon$ but $\alpha'$ and $\beta$ are, then some vertex of $\pi_{\alpha'}(\beta)$ is distance at most $3$ to $\beta$.
\end{lemma}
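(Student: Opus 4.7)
My plan is to address the two claims separately, with the main content in the first. The hypothesis $|\beta' \cap \alpha''| = |\beta' \cap \alpha'| - 1$ isolates a unique intersection point $p \in (\alpha' \setminus \alpha'') \cap \beta'$: the two components of $\beta' \setminus \alpha'$ adjacent to $p$, call them $b_1$ and $b_2$, merge in $\beta' \setminus \alpha''$ into a single component $b_{12} = b_1 \cup \{p\} \cup b_2$, while all other components are unchanged. So membership in $\pi_{\alpha'}(\beta')$ strengthens membership in $\pi_{\alpha''}(\beta')$ in only two ways: the equator must additionally avoid $\alpha' \setminus \alpha''$, and the bound on $|\gamma \cap b_{12}|$ is replaced by separate bounds on $|\gamma \cap b_1|$ and $|\gamma \cap b_2|$.

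For the first claim, I would apply the equator construction from Lemma~\ref{projection gives you uniformly bounded set} to the admissible pair $(\alpha'', \beta')$, producing an equator $\gamma = \partial N \in \pi_{\alpha''}(\beta')$ as the boundary of a closed disk $N$ containing an embedding of the dual tree $T_{\alpha'', \beta'}$. Since $\gamma \cap \alpha'' = \emptyset$ forces $p \notin \gamma$, the bound $|\gamma \cap b_{12}| \leq 2$ immediately yields $|\gamma \cap b_1| + |\gamma \cap b_2| \leq 2$, so the individual bounds needed for $\pi_{\alpha'}(\beta')$ hold automatically. The remaining requirement is $\gamma \cap (\alpha' \setminus \alpha'') = \emptyset$, which I would secure by carrying out the construction carefully: route the embedded edge of $T_{\alpha'', \beta'}$ corresponding to $b_{12}$ so that it crosses $b_{12}$ at an interior point of $b_1$ or $b_2$ (not at $p$), place all other embedded vertices and edges in their complementary regions away from the measure-zero set $\alpha' \setminus \alpha''$, and choose $N$ so that $\partial N$ does not meet a thin regular neighborhood of $\alpha' \setminus \alpha''$. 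These adjustments leave the intersection bounds on components of $\beta' \setminus \alpha''$ intact, so $\gamma \in \pi_{\alpha'}(\beta') \cap \pi_{\alpha''}(\beta')$.

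For the second claim, I would first note that when $\beta$ is $\varepsilon$-balanced the hypothesis is essentially vacuous: any component of $S^2 \setminus (\alpha'' \cup \beta)$ is contained in a component of $S^2 \setminus \beta$ and hence has area at most $1 - \varepsilon$, so $(\alpha'', \beta)$ is automatically admissible. If one reads the statement in a more general regime where $\beta$ need not be balanced, the strategy would be to select any $\gamma \in \pi_{\alpha'}(\beta)$ from Lemma~\ref{projection gives you uniformly bounded set} and use the concentration of area forced by inadmissibility of $(\alpha'', \beta)$, together with Lemma~\ref{trivial intersections} (possibly combined with Lemma~\ref{We can straighten the arc in side R} to handle the area-matching), to conclude $d_{\mathcal{C}^{\dagger}_{\varepsilon}(S^2)}(\gamma, \beta) \leq 3$.

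The main obstacle is the first claim, and specifically verifying that $\partial N$ can be simultaneously arranged to meet $b_{12}$ at most twice and to avoid the short arcs of $\alpha' \setminus \alpha''$. Because $\alpha' \setminus \alpha''$ has measure zero and sits inside positive-area complementary regions of $\alpha'' \cup \beta'$, and the routing of the dual tree's edges is flexible away from $p$, I expect this to succeed with attention to the combinatorics of the embedding, but the verification must be made rigorous.
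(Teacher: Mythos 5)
Your reduction of the first claim is right as far as it goes: since $\gamma\cap\alpha''=\emptyset$ forces $p\notin\gamma$, the bound $|\gamma\cap b_{12}|\le 2$ does split into the required bounds on $b_1$ and $b_2$, so the only genuinely new constraint for membership in $\pi_{\alpha'}(\beta')$ is disjointness from $\alpha'\setminus\alpha''$. But the step you yourself flag as the main obstacle is a real gap, and the fix you propose does not work. The portion of $\alpha'\setminus\alpha''$ running from the endpoint of $\alpha''$ to the new intersection point $p$ is a properly embedded arc in the closure of one complementary region of $\alpha''\cup\beta'$, and it \emph{separates} that region into two pieces $R_1$ and $R_2$. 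This is not a measure-zero set one can route around: if components of $\beta'\setminus\alpha''$ occur on the boundary of both $R_1$ and $R_2$, any embedded dual tree with its vertex on one side must cross the arc to reach the other; and, more importantly, the disk $N$ must be grown inside the complementary regions it meets until it encloses area exactly $\tfrac{1}{2}$, so confining $N$ to one side of the arc is only possible if the retained side, together with all the other regions, has area at least $\tfrac{1}{2}$. The paper's proof supplies exactly the missing ingredient, an area dichotomy: writing $R_3$ for the complement of $R_1\cup R_2$, if both $\mathrm{area}(R_1)+\mathrm{area}(R_3)\le\tfrac{1}{2}$ and $\mathrm{area}(R_2)+\mathrm{area}(R_3)\le\tfrac{1}{2}$ then $\mathrm{area}(R_3)\le 0$, a contradiction; hence one of the two unions has area greater than $\tfrac{1}{2}$ and an equator of $\pi_{\alpha'}(\beta')\cap\pi_{\alpha''}(\beta')$ can be found avoiding the other region $R_i$ entirely (which also kills all intersections with the corresponding $b_i$). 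Without this dichotomy your construction can fail, so the first claim is not established as written.

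Your remark on the second claim is correct and consistent with the paper: every component of $S^2\setminus(\alpha''\cup\beta)$ lies in a component of $S^2\setminus\beta$, which has area at most $1-\varepsilon$ because $\beta$ is $\varepsilon$-balanced, so the hypothesis is vacuous for the full curve $\beta$; indeed, when the paper applies the lemma it observes that $\alpha_i$ and $\beta$ are always admissible. (The paper still records an argument for the hypothetical case, placing an equator inside the large merged component and comparing it to a curve of the form $\partial N$ via Lemma~\ref{trivial intersections}.)
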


\begin{proof}
Suppose $\alpha''$ and $\beta$ are admissible for $\varepsilon$, then $\alpha'$ and $\beta$ are also. As shown in Figure \ref{fig: Closed subarcs provide closed projection subsets}, the red dotted line represents the part where $\alpha'$ exceeds $\alpha''$. 
Since $|\beta\cap \alpha''|=|\beta\cap \alpha'|-1$, we know that $\alpha'$ separates some region (whose boundary is a bigon) of $\alpha''\cup\beta'$ into two smaller regions (whose boundaries are also bigons), as shown in Figure \ref{fig: Closed subarcs provide closed projection subsets}. 

\begin{figure}[H]
    \centering
    \includegraphics[width=0.35\textwidth]{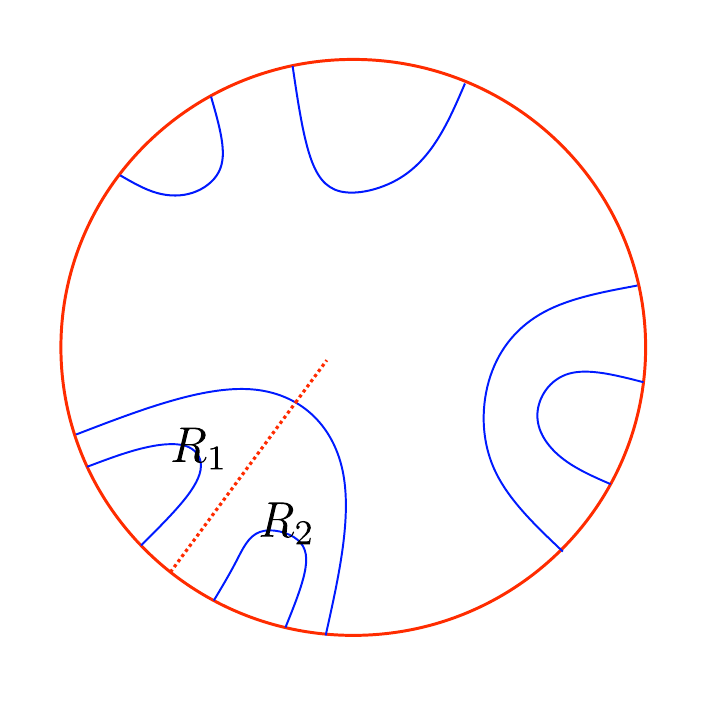}
    \caption{Closed subarcs provide closed projection subsets}
    \label{fig: Closed subarcs provide closed projection subsets}
\end{figure}

We will write $R_1$ and $R_2$ for these smaller regions respectively. 
And we write $R_3$ for the complement of $R_1\cup R_2$. We know that $\mathrm{area}(R_1)+\mathrm{area}(R_2)+\mathrm{area}(R_3)=1$. If $\mathrm{area}(R_1)+\mathrm{area}(R_3)> \frac{1}{2}$, then we can always find an equator in $\pi_{\alpha'}(\beta)$ disjoint from $R_2$ and this equator also belongs to $\pi_{\alpha''}(\beta)$.
Similarly, if $\mathrm{area}(R_2)+\mathrm{area}(R_3)> \frac{1}{2}$, we can deduce that there is an equator in $\pi_{\alpha'}(\beta)$ disjoint from $R_1$ and this equator also belongs to $\pi_{\alpha''}(\beta)$.
In both cases, the lemma is proved.
Now we need to consider the case when both $\mathrm{area}(R_1)+\mathrm{area}(R_3)\leq \frac{1}{2}$ and $\mathrm{area}(R_2)+\mathrm{area}(R_3)\leq \frac{1}{2}$, which implies $\mathrm{area}(R_1)+\mathrm{area}(R_2)+2\mathrm{area}(R_3)\leq 1$, and so $\mathrm{area}(R_3)\leq 0$, which is impossible.

When $\alpha''$ and $\beta$ are not admissible for $\varepsilon$, but $\alpha'$ and $\beta$ are, then it means that two complementary components of $\alpha'\cup\beta$ were glued together to make a complementary component $D$ of $\alpha''\cup\beta$ that has area larger than $1-\varepsilon$.
We then pick an arbitrary equator in the interior of $D$.
Then following the proof that $\pi_{\alpha'}(\beta')\neq\emptyset$ in Lemma~\ref{projection gives you uniformly bounded set}, we can pick the dual tree $T$ to have a single vertex $v$ within the interior of $D$ and $T$ to intersect $\partial D$ only at the edges incident to $v$ in $T$.
This means that the equator $\partial N$ will intersect the equator $\partial D$ with at most two non-bigon regions, and so Lemma~\ref{trivial intersections} applies and we are done.
\end{proof}

Now we can prove the connectivity of $\mathcal{C}^\dagger_{\varepsilon}(S^2)$. We begin with the following which we deduce from the above lemmas.

\begin{proof}[Proof of Lemma~\ref{coarsely connected}]
First, we pick a connected closed subarc $\alpha'$ of $\alpha$, such that $|\alpha'\cap\beta|=|\alpha\cap \beta|$.
It is the case that $\alpha'$ and $\beta$ are admissible for $\varepsilon$, so by Lemma~\ref{projection of largest subarc}, some vertex of $\pi_{\alpha'}(\beta)$ is adjacent to $\alpha$.
Now choose a finite sequence of closed subarcs $\alpha_{i+1}\subset \alpha_i$ of $\alpha_0=\alpha'$ such that $|\alpha_{i+1}\cap\beta|=|\alpha_i\cap \beta|-1$.
It is always the case that $\alpha_i$ and $\beta$ are admissible for $\varepsilon$.
By Lemma \ref{shrink beta to get a quasi path}, we know that $\pi_{\alpha_{i+1}}(\beta)$ and $\pi_{\alpha_i}(\beta)$ share a common vertex when $\alpha_{i+1}$ and $\beta$ are admissible for $\varepsilon$.
In the case that $\alpha_{i}$ and $\beta$ are disjoint (this is the last vertex of the sequence) we still deduce that all vertices in $\pi_{\alpha_i}(\beta)$ are adjacent to $\beta$.
Combining the above with Lemma~\ref{projection gives you uniformly bounded set}, this provides a path from $\alpha$ to $\beta$.

Finally, given any $\eta\in\pi_{\alpha'}(\beta')$, where $\alpha'\subset\alpha$ and $\beta'\subset\beta$ are admissible for $\varepsilon$, we recall that $\pi_{\alpha'}(\beta)\subset \pi_{\alpha'}(\beta')$ is a consequence of the definition. In light of Lemma~\ref{projection gives you uniformly bounded set}, this means $\eta$ has a path in the $8$-neighbourhood of $\mathcal{L}_\pitchfork(\alpha,\beta)$ to some vertex of $\pi_{\alpha'}(\beta)$. By the argument in the previous paragraph, there is a path to $\alpha$ and $\beta$ also. The lemma is proved.
\end{proof}

Finally, the connectivity of $\mathcal{C}^\dagger_\varepsilon(S^2)$ can be deduced. 
Indeed, if we are given arbitrary vertices $\alpha$ and $\beta$, then we can find $\alpha_0$ adjacent to $\alpha$, and by considering perturbations, we may assume in addition that $\alpha_0\pitchfork\beta$, and then invoke Lemma~\ref{coarsely connected} to find a path between $\alpha_0$ and $\beta$, and therefore a path between $\alpha$ and $\beta$.

\subsection{Proof of hyperbolicity}\label{Subsection-hyperbolicity}

\begin{definition}\label{minimal pair}
Let $\alpha,\beta\in\mathcal{C}^\dagger_\varepsilon(S^2)$, and let $\alpha'\subseteq \alpha$ and $\beta'\subseteq \beta$. Suppose that $\alpha'$ and $\beta'$ are admissible for $\varepsilon$. We say that ($\alpha'$,$\beta'$) is a \emph{minimal pair} if 
\begin{enumerate}
\item there is no choice of proper arc $\alpha''\subset \alpha'$ such that $\alpha''$ and $\beta'$ are admissible for $\varepsilon$, and
\item there is no choice of proper arc $\beta''\subset \beta'$ such that $\alpha'$ and $\beta''$ are admissible for $\varepsilon$.
\end{enumerate}
\end{definition}

\begin{lemma} \label{shrink to minimal pair}
    If $\alpha'$ and $\beta'$ are subarcs, and $\alpha'$ and $\beta'$ are admissible for $\varepsilon$, then there exists a minimal pair $(\alpha'',\beta'')$ such that $\alpha''\subseteq \alpha'$, $\beta''\subseteq \beta'$, and there is some equator $\eta\in\pi_{\alpha''}(\beta'')\cap \pi_{\alpha'}(\beta').$
\end{lemma}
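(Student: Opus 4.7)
The plan is to proceed in two steps: first construct the minimal pair $(\alpha'',\beta'')$ by iterative shrinking of $(\alpha',\beta')$, and second construct an equator $\eta$ that simultaneously lies in $\pi_{\alpha''}(\beta'')$ and $\pi_{\alpha'}(\beta')$.

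For step one, I would repeatedly replace the current admissible pair with a proper-subarc-shrinking of one of its two arcs that preserves admissibility for $\varepsilon$. The process terminates at a minimal pair, since each shrinking strictly decreases a finite combinatorial measure of complexity (for instance, the cardinality of $\alpha'\cap\beta'$ together with the number of free endpoints). A useful observation for the second step is that in any minimal pair, every endpoint of $\alpha''$ must lie on $\beta''$ and every endpoint of $\beta''$ must lie on $\alpha''$. Indeed, otherwise one could shrink a tiny subarc near a free endpoint without altering the areas of the complementary regions of $\alpha''\cup\beta''$, contradicting minimality.

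For step two, I would apply the dual-tree construction of Lemma~\ref{projection gives you uniformly bounded set} to $(\alpha'',\beta'')$, producing $\eta=\partial N$, where $N$ is a closed topological disk of area $\tfrac{1}{2}$ containing an embedded copy of $T_{\alpha'',\beta''}$. This immediately gives $\eta\in\pi_{\alpha''}(\beta'')$. To upgrade to $\eta\in\pi_{\alpha'}(\beta')$, I need condition~(1) (disjointness from $\alpha'$) and condition~(3) (at most two intersections with each component of $\beta'\setminus\alpha'$). Condition~(1) can be arranged by embedding the tree and choosing $N$ so as to avoid the one-dimensional set $\alpha'\setminus\alpha''$; there is enough two-dimensional freedom for such a perturbation, with the area of $N$ in each complementary region of $\alpha''\cup\beta''$ redistributed as needed to keep $\mathrm{area}(N)=\tfrac{1}{2}$.

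The main obstacle is condition~(3). I would exploit the endpoint property of minimal pairs noted above: since the endpoints of $\beta''$ lie on $\alpha''\subseteq\alpha'$, no component $b'$ of $\beta'\setminus\alpha'$ can straddle an endpoint of $\beta''$. Such a straddling would place an $\alpha'$-intersection (namely the endpoint of $\beta''$) in the interior of $b'$, contradicting that $b'$ is a component of $\beta'\setminus\alpha'$. Therefore each such $b'$ lies either entirely inside $\beta''$ or entirely in $\beta'\setminus\beta''$. In the first case, $b'$ is contained in a single component $b''$ of $\beta''\setminus\alpha''$, whence $|\eta\cap b'|\leq|\eta\cap b''|\leq 2$ by the projection condition for $\pi_{\alpha''}(\beta'')$. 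In the second case, I would arrange the construction of $N$ so that $\partial N$ is also disjoint from $\beta'\setminus\beta''$; this is achievable by an analogous one-dimensional avoidance argument, and it yields $|\eta\cap b'|=0$. Combining the two cases gives $\eta\in\pi_{\alpha'}(\beta')$, completing the verification.
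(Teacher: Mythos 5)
Your step one (existence of a minimal pair, and the observation that its endpoints must lie at intersection points) is essentially fine, and your reduction of condition~(3) for the components of $\beta'\setminus\alpha'$ that lie inside $\beta''$ is a correct and nice observation. The genuine gap is in step two, at exactly the point where all the difficulty of the lemma is concentrated: you assert that the disk $N$, which must contain the dual tree of $(\alpha'',\beta'')$, have boundary disjoint from all of $\alpha'$ \emph{and} from $\beta'\setminus\beta''$, and meet each component of $\beta''\setminus\alpha''$ at most twice, can be fattened to have area exactly $\tfrac12$ because ``there is enough two-dimensional freedom.'' This is not justified. The arcs of $(\alpha'\setminus\alpha'')\cup(\beta'\setminus\beta'')$ cut each complementary region of $\alpha''\cup\beta''$ into sub-regions, and since $\partial N$ may never cross $\alpha'$ and has a crossing budget of two per component of $\beta'\setminus\alpha'$ (a budget already partly spent by the tree edges), $N$ can only grow into the sub-regions reachable under these constraints. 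There is no a priori lower bound of $\tfrac12$ on the total area so reachable: the tree vertices can sit in small pockets walled off by $\alpha'\setminus\alpha''$, and nothing in your setup rules this out. Note that in the unconstrained situation of Lemma~\ref{projection gives you uniformly bounded set} the argument works precisely because $N$ meets \emph{every} complementary region and so can absorb almost all of the area; your extra avoidance requirements destroy that property, and this is the whole difficulty.

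The paper resolves this tension by a different order of operations. It first shrinks $\beta'$ to a subarc $\beta''$ that is minimal admissible \emph{relative to $\alpha'$}. Minimality forces the existence of a single component $b$ of $\beta''\setminus\alpha'$ whose two adjacent complementary regions of $\alpha'\cup\beta''$ merge into a region of area greater than $1-\varepsilon\geq\tfrac12$. An equator $\eta$ can then be taken to bound a disk of area $\tfrac12$ inside this one large region, crossing $\beta''$ only twice (through the single door $b$); the area-$\tfrac12$ requirement is now automatic because the ambient region has area greater than $\tfrac12$, and the remaining crossing conditions with $\beta'\setminus\alpha'$ are handled locally inside that region. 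Only afterwards is $\alpha'$ shrunk to a minimal $\alpha''$, and $\eta$ remains in $\pi_{\alpha''}(\beta'')$ for free since it is disjoint from $\alpha'\supseteq\alpha''$ and meets $\beta''$ only twice. If you want to salvage your argument, you would need to import this idea: you cannot take an arbitrary minimal pair and an arbitrary dual-tree equator for it; you must choose the minimal pair (and the equator) so that the area constraint is satisfiable, and the minimality of $\beta''$ with respect to $\alpha'$ is what provides the region of area greater than $\tfrac12$ in which to do so.
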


\begin{proof}
    First we pick $\beta''$ a minimal compact connected subarc of $\beta'$ such that $\alpha'$ and $\beta''$ are admissible for $\varepsilon$. Then it is clear that $\pi_{\alpha'}(\beta')\subset \pi_{\alpha'}(\beta'')$ by definition.

    Now since $\beta''$ is minimal, it implies that there is a connected component of $b\subset\beta''\setminus\alpha'$ such that $(S^2\setminus (\alpha'\cup\beta''))\cup b$ has a connected component whose area is greater than $1-\varepsilon$ and hence greater than $\frac{1}{2}$.
    This implies that there is an equator $\eta\in \pi_{\alpha'}(\beta'')$ that intersects $\beta''$ twice transversely (only along $b$).
    In fact, extending $\beta''$ back to $\beta'$, we can pick $\eta$ so that it intersects each component of $\beta'\setminus\alpha'$ at most twice and transversely, and only intersects $\beta''$ twice along $b$, and so $\eta\in\pi_{\alpha'}(\beta')$.

    Now pick $\alpha''\subseteq\alpha'$ minimal such that $\alpha''$ and $\beta''$ are admissible for $\varepsilon$. 
    It is clear from the definition $\alpha''$ and $\beta''$ that $(\alpha'',\beta'')$ is a minimal pair. 
    Now observe that $\eta$ is disjoint from $\alpha''$ and intersects $\beta''$ twice transversely (only along $b$), so it is immediate that $\eta\in\pi_{\alpha''}(\beta'')$.
\end{proof}

\begin{lemma}\label{symmetric}
If $\alpha'$ and $\beta'$ are admissible for $\varepsilon$, and ($\alpha'$,$\beta'$) is a minimal pair then $\mathrm{diam}(\pi_{\alpha'}(\beta')\cup\pi_{\beta'}(\alpha'))\leq 18$.

\end{lemma}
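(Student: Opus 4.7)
By Lemma~\ref{projection gives you uniformly bounded set}, both $\pi_{\alpha'}(\beta')$ and $\pi_{\beta'}(\alpha')$ have diameter at most $8$ in $\mathcal{C}^{\dagger}_{\varepsilon}(S^{2})$. Hence, by the triangle inequality, the lemma reduces to exhibiting a pair $\eta_1 \in \pi_{\alpha'}(\beta')$, $\eta_2 \in \pi_{\beta'}(\alpha')$ with $d_{\mathcal{C}^{\dagger}_{\varepsilon}(S^{2})}(\eta_1, \eta_2) \leq 2$; this immediately yields the claimed bound $\mathrm{diam}(\pi_{\alpha'}(\beta') \cup \pi_{\beta'}(\alpha')) \leq 8 + 2 + 8 = 18$.

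To produce such witnesses, I would use both clauses of minimality. By clause (2) of Definition~\ref{minimal pair}, no proper subarc of $\beta'$ is admissible with $\alpha'$, so by the argument used in the proof of Lemma~\ref{shrink to minimal pair} there is a distinguished component $b \subset \beta' \setminus \alpha'$ for which the two complementary regions of $\alpha' \cup \beta'$ adjacent to $b$ glue across $b$ into a single region of area strictly greater than $1 - \varepsilon$. One then constructs an equator $\eta_1 \in \pi_{\alpha'}(\beta')$ inside this enlarged region: it is disjoint from $\alpha'$ and meets $\beta'$ exactly twice transversely, along $b$. Symmetrically, clause (1) of minimality yields a component $a \subset \alpha' \setminus \beta'$ and an equator $\eta_2 \in \pi_{\beta'}(\alpha')$ disjoint from $\beta'$ and meeting $\alpha'$ exactly twice transversely, along $a$.

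The final step is to confirm that $\eta_1$ and $\eta_2$ can be chosen so that $\eta_1 \cup \eta_2$ has at most two non-bigon complementary components, since Lemma~\ref{trivial intersections} then produces an equator adjacent to both and gives $d(\eta_1, \eta_2) \leq 2$. The intuition is that $\eta_1$ can be arranged to hug $\alpha'$ closely on one side, except in a small neighbourhood of $b$ where it crosses $\beta'$; likewise $\eta_2$ hugs $\beta'$ except near $a$. Thus away from $a$ and $b$, the union $\eta_1 \cup \eta_2$ looks (up to a $C^0$-small isotopy) like a controlled perturbation of $\alpha' \cup \beta'$, whose complementary regions are, apart from a bounded number of bigons, in bijection with those of $\alpha' \cup \beta'$.

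The main obstacle is then to make this bigon count argument precise across all configurations: depending on the positions of $a$ and $b$ in the dual tree $T_{\alpha', \beta'}$, the two ``half-spheres'' bounded by $\eta_1$ and $\eta_2$ can overlap in several different ways, and one must exhibit \emph{compatible} choices of $\eta_1, \eta_2$ for which the number of non-bigon complementary components of $\eta_1\cup \eta_2$ is always at most two. This is essentially a case analysis driven by the combinatorics of $T_{\alpha',\beta'}$ and the relative location of the distinguished leaves $a, b$, using admissibility to rule out the problematic configurations.
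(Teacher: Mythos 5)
Your reduction is the same as the paper's: both projection sets have diameter at most $8$ by Lemma~\ref{projection gives you uniformly bounded set}, so it suffices to find $\eta_1\in\pi_{\alpha'}(\beta')$ and $\eta_2\in\pi_{\beta'}(\alpha')$ at distance at most $2$, and your identification of the distinguished components $a$ and $b$ coming from the two clauses of minimality matches the paper's setup. However, the step you defer to ``a case analysis driven by the combinatorics of $T_{\alpha',\beta'}$'' is precisely the content of the lemma, and you have not supplied the idea that makes it work. The paper's key observation is a pigeonhole argument on area: shrinking $\alpha'$ by one intersection merges two complementary regions $A_1,A_2$ of $\alpha'\cup\beta'$ into a region of area $>1-\varepsilon$, shrinking $\beta'$ merges $A_3,A_4$ into a region of area $>1-\varepsilon$, and since $\mathrm{area}(A_1)+\mathrm{area}(A_2)+\mathrm{area}(A_3)+\mathrm{area}(A_4)>2-2\varepsilon\geq 1$ while the total area of $S^2$ is $1$, one must have $\{A_1,A_2\}\cap\{A_3,A_4\}\neq\emptyset$. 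This forced overlap is what guarantees the two equators can be placed in overlapping large regions ($\overline{A_1\cup A_2}$ and $\overline{A_1\cup A_3}$, say) and arranged to intersect exactly twice, or (when $\{A_1,A_2\}=\{A_3,A_4\}$) at most four times with at most two non-bigon regions. Without it, your worry about ``the two half-spheres overlapping in several different ways'' is not addressed: an equator must enclose area $\tfrac12$ on each side, so it cannot simply ``hug'' $\alpha'$ or $\beta'$, and if the two merged large regions could be disjoint there would be no way to keep $\eta_1$ and $\eta_2$ close.

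A secondary point: your target of ``at most two non-bigon complementary components of $\eta_1\cup\eta_2$'' via a perturbation of $\alpha'\cup\beta'$ is not how the paper proceeds and would likely fail as stated, since $\alpha'\cup\beta'$ can have arbitrarily many non-bigon regions; the paper instead builds $\gamma_1$ and $\gamma_2$ from scratch inside the two overlapping large regions so that $|\gamma_1\cap\gamma_2|\leq 2$ (giving distance $1$ and total $17$) or $|\gamma_1\cap\gamma_2|\leq 4$ with at most two non-bigons (giving distance $2$ via Lemma~\ref{trivial intersections} and total $18$). Supplying the area pigeonhole and the resulting two-case analysis would complete your argument.
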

\begin{proof}
First, $(\alpha',\beta')$ is never a minimal pair unless both $\alpha'$ and $\beta'$ are proper subarcs instead of $\varepsilon$-balanced curves.

So suppose both $\alpha'$ and $\beta'$ are proper closed subarcs. Since $(\alpha',\beta')$ is a minimal pair, we know that if we pick a connected subarc $\alpha''\subsetneqq \alpha'$ such that $|\alpha''\cap \beta'|=|\alpha'\cap \beta'|-1$, two connected components of $S^2\backslash (\alpha'\cup\beta')$, denoted by $A_1$ and $A_2$ respectively, will merge into one connected component of $S^2\backslash (\alpha''\cup\beta')$ with area greater than $1-\varepsilon$.
Similarly, if we pick a connected subarc $\beta''\subsetneqq \beta'$ such that $|\alpha'\cap \beta''|=|\alpha'\cap \beta'|-1$, two connected components of $S^2\backslash (\alpha'\cup\beta')$, denoted by $A_3$ and $A_4$ respectively, will merge into one connected component of $S^2\backslash (\alpha''\cup\beta')$ with area greater than $1-\varepsilon$. 
Since $\mathrm{area}(A_1)+\mathrm{area}(A_2)+\mathrm{area}(A_3)+\mathrm{area}(A_4)> 2-2\varepsilon\geq 1$, we know that $\{A_1,A_2\}\cap\{A_3,A_4\}\neq \emptyset$. So, there are two cases:

Case (i): $|\{A_1,A_2\}\cap\{A_3,A_4\}|=1$. Without loss of generality, we assume that $\{A_1,A_2\}\cap\{A_3,A_4\}=\{A_1\}$. The schematic diagram is shown in Figure \ref{fig: Case i}, where the blue arc is $\beta'$ and the red arc is $\alpha'$. And we write $\{\alpha^+,\alpha^-\}$ for the endpoints of $\alpha'$ and $\{\beta^+,\beta^-\}$ for the endpoints of $\beta'$. 
\begin{figure}[H]
    \centering
    \includegraphics[width=0.55\textwidth]{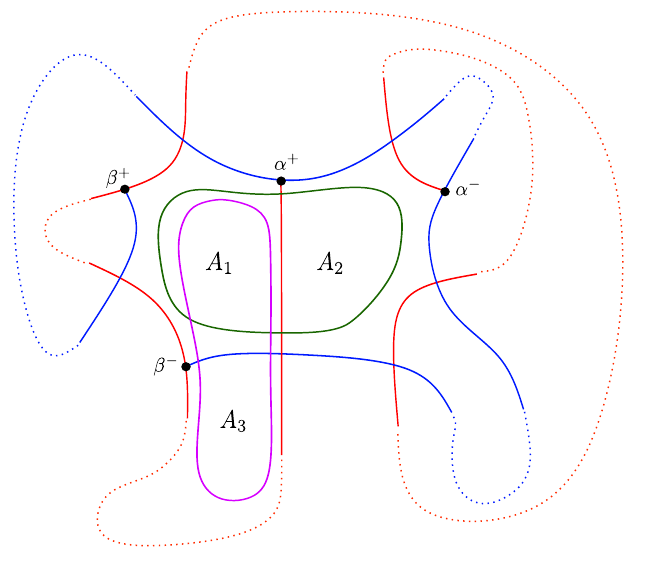}
    \caption{Case (i)}
    \label{fig: Case i}
\end{figure}
Without loss of generality, we assume that we shrink $\alpha'$ from the endpoint $\alpha^+$ such that $A_1$ and $A_2$ merge into one connected component and that we shrink $\beta'$ from the endpoint $\beta^-$ such that $A_1$ and $A_3$ merge into one connected component.

Since $\mathrm{area}(A_1)+\mathrm{area}(A_2)> 1-\varepsilon\geq \frac{1}{2}$ and $\mathrm{area}(A_1)+\mathrm{area}(A_3)> 1-\varepsilon\geq \frac{1}{2}$, we can find an equator $\gamma_1 \in \pi_{\beta'}(\alpha')$ (the green curve in Figure \ref{fig: Case i}) inside $\overline{A_1\cup A_2}$ and an equator $\gamma_2 \in \pi_{\alpha'}(\beta')$ (the purple curve in Figure \ref{fig: Case i}) inside $\overline{A_1\cup A_3}$ such that $\gamma_1\pitchfork \gamma_2$ and $|\gamma_1\cap\gamma_2|=2$.
It follows that $d_{\mathcal{C}^{\dagger}_{\varepsilon}(S^{2})}(\gamma_1,\gamma_2)=1$. Hence, by the triangle inequality, we have that 
$$
d(\pi_{\alpha'}(\beta'), \pi_{\beta'}(\alpha'))=\mathrm{diam}(\pi_{\alpha'}(\beta')\cup \pi_{\beta'}(\alpha'))\leq 8+1+8=17.
$$

Case (ii): $|\{A_1,A_2\}\cap\{A_3,A_4\}|=2$, i.e. $\{{A_1,A_2\}}=\{A_3,A_4\}$. 
The schematic diagram is shown in Figure \ref{fig: Case ii}, where the blue arcs are subarcs of $\beta'$ and the red arcs are subarcs of $\alpha'$. 
We write $q$ for one of the endpoints of $\alpha'$. And we assume that once we shrink $\alpha'$ from $q$ to get $\alpha''$, we will merge $A_1$ and $A_2$ to a connected component with $\mathrm{area}> 1-\varepsilon$. 
The endpoint of $\beta'$, which we shrink $\beta'$ to $\beta''$ from, written $p$, belongs to $\partial A_1$ and $\partial A_2$ respectively.
\begin{figure}[H]
    \centering
    \includegraphics[width=0.45\textwidth]{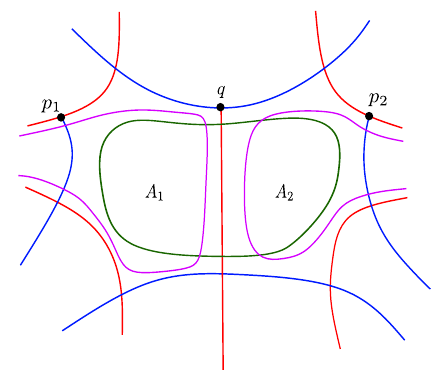}
    \caption{Case (ii)}
    \label{fig: Case ii}
\end{figure}
As shown in Figure \ref{fig: Case ii}, it is easy to find an equator $\gamma_2$ (the purple curve in Figure \ref{fig: Case ii}) inside the region $A_1\cup A_2$ such that $\gamma_1\pitchfork \gamma_2$ and $|\gamma_1\cap\gamma_2|\leq 4$. Moreover, $\gamma_2\in \pi_{\alpha'}(\beta')$ and $\gamma_1\in \pi_{\beta'}(\alpha')$. Hence, by Lemma~\ref{trivial intersections}, we know that $d_{\mathcal{C}^{\dagger}_{\varepsilon}(S^{2})}(\gamma_1,\gamma_2)\leq 2$. Therefore, by the triangle inequality, we have
$$
d(\pi_{\alpha'}(\beta'), \pi_{\beta'}(\alpha'))=\mathrm{diam}(\pi_{\alpha'}(\beta')\cup \pi_{\beta'}(\alpha'))\leq 2+8+8=18.
$$
\end{proof}

To prove the slimness condition in Lemma~\ref{Guessing Geodesic Lemma}, we also need the following lemma:
\begin{lemma}\label{lemma for the uniform boundedness of the guessing geodesic}
Let $\alpha,\beta,\gamma\in \mathcal{C}^{\dagger}_{\varepsilon}(S^2)$.
Let $\alpha'$ be a connected subarc of $\alpha$, and $\beta',\gamma'$ be connected closed subsets of $\beta,\gamma$ respectively. 
If $\alpha'$ and $\beta'$, and $\alpha'$ and $\gamma'$ are both admissible for $\varepsilon$, and $\beta'\pitchfork \gamma'$ with $|\beta'\cap \gamma'|\leq 2$, then $$\pi_{\alpha'}(\beta')\cap \pi_{\alpha'}(\gamma')\neq\emptyset.$$\end{lemma}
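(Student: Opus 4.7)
The strategy is to adapt the ``spanning tree plus thin tubular neighbourhood'' construction used in the proof of Lemma~\ref{projection gives you uniformly bounded set} so as to produce an equator $\eta$ lying simultaneously in $\pi_{\alpha'}(\beta')$ and $\pi_{\alpha'}(\gamma')$. Concretely, the plan is to construct a finite tree $T^{\ast}$ embedded in $S^2\setminus\alpha'$ which visits every complementary component of the planar graph $G = \alpha'\cup\beta'\cup\gamma'$, and which crosses each component of $\beta'\setminus\alpha'$ at most once and each component of $\gamma'\setminus\alpha'$ at most once, always transversely. The boundary $\partial N$ of a sufficiently thin closed tubular neighbourhood $N$ of $T^{\ast}$ will then be a simple closed curve disjoint from $\alpha'$ meeting each such component at most twice transversely, and a final area adjustment within the complementary components of $G$ will promote it to an equator.

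The key combinatorial step is the construction of $T^{\ast}$. Consider the dual graph $\Gamma$ whose vertices are the complementary components of $G$ and whose edges correspond to the subarcs into which the points of $\beta'\cap\gamma'$ cut the components of $(\beta'\cup\gamma')\setminus\alpha'$. Since $|\beta'\cap\gamma'|\leq 2$, a single component of $\beta'\setminus\alpha'$ contributes up to three edges of $\Gamma$ (and similarly for each component of $\gamma'\setminus\alpha'$), while the failure of $\Gamma$ to be a tree is controlled entirely by the at most two interior vertices coming from $\beta'\cap\gamma'$. A finite case analysis on the number of points of $\beta'\cap\gamma'$ and on which components of $\beta'\setminus\alpha'$ and $\gamma'\setminus\alpha'$ contain them permits us to select a spanning tree $T^{\ast}$ of $\Gamma$ satisfying the required one-edge-per-component quota.

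For the area adjustment, we exploit that every complementary component of $G$ has area at most $1-\varepsilon$, because each sits inside a component of $S^2\setminus(\alpha'\cup\beta')$ whose area is at most $1-\varepsilon$ by admissibility of $(\alpha',\beta')$. The expansion-and-contraction argument from the proof of Lemma~\ref{projection gives you uniformly bounded set}, possibly combined with Lemmas~\ref{trivial intersections} and~\ref{We can straighten the arc in side R} in the tight cases, then adapts to modify $\partial N$ into an equator in $\pi_{\alpha'}(\beta')\cap\pi_{\alpha'}(\gamma')$. The principal obstacle is the combinatorial construction of $T^{\ast}$: a naive spanning tree of $\Gamma$ may easily overuse edges coming from the same component of $\beta'\setminus\alpha'$ or $\gamma'\setminus\alpha'$ (for instance when an intermediate vertex can only be reached through such an edge), so enforcing the per-component quota globally requires carefully exploiting the small cycle structure of $\Gamma$ afforded by $|\beta'\cap\gamma'|\leq 2$.
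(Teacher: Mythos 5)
There is a genuine gap, and it sits exactly where you flagged ``the principal obstacle'': the spanning tree $T^{\ast}$ you need does not exist in general. Take the simplest nontrivial configuration: a single component $b$ of $\beta'\setminus\alpha'$ and a single component $c$ of $\gamma'\setminus\alpha'$ crossing transversely once at a point $p$, with all four resulting legs having their endpoints on $\alpha'$. In the disk $S^2\setminus\alpha'$ this $X$-shape has exactly four complementary faces, and your dual graph $\Gamma$ is a $4$-cycle whose edges alternate between the two sublegs of $b$ and the two sublegs of $c$. Any spanning tree of a $4$-cycle uses three of the four edges, hence necessarily uses both sublegs of $b$ or both sublegs of $c$, violating your per-component quota; and the boundary of a tubular neighbourhood of such a tree would then meet that component four times, so it fails condition (3) of Definition~\ref{projection}. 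This configuration is perfectly compatible with all hypotheses of the lemma ($|\beta'\cap\gamma'|=1$, both pairs admissible), so the obstruction is not a matter of a more careful case analysis: the object you are trying to build cannot exist.

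The lemma itself is not in danger, but the fix requires abandoning the ``boundary of a neighbourhood of a dual spanning tree'' template. In the configuration above, a curve encircling the crossing point $p$ meets each of the four sublegs once, hence meets $b$ twice and $c$ twice \emph{in total}, which is exactly what Definition~\ref{projection} permits; expanding such a curve into the adjacent faces (it touches all four, so it can be grown to enclose area $\tfrac12$) produces the desired common equator. This is in the spirit of the paper's argument, which does not dualise at all: it first disposes of the case where $\gamma'$ misses $\alpha'$, and otherwise locates a distinguished region determined by the intersection pattern of $\beta'\setminus\alpha'$ with $\gamma'\setminus\alpha'$ (e.g.\ a bigon between them whose closure avoids $\alpha'$) and grows an equator out of it, keeping direct control of the total intersection number with each component rather than a one-crossing-per-subleg quota. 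Your area-adjustment step and the observation that admissibility bounds the face areas are fine; it is the combinatorial core that needs to be replaced.
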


\begin{proof}
If $\gamma'=\gamma$ and $\gamma$ is disjoint from $\alpha'$, by definition of $\pi_{\alpha'}(\beta')$, we have $\gamma\in \pi_{\alpha'}(\beta')$. There is an (area-preserving) perturbation $\gamma''$ of $\gamma$ which intersects $\gamma$ exactly twice transversely and  $\gamma''\in \pi_{\alpha'}(\beta')$. We are done in this case.

\begin{figure}[H]
    \centering
    \includegraphics[width=0.7\textwidth]{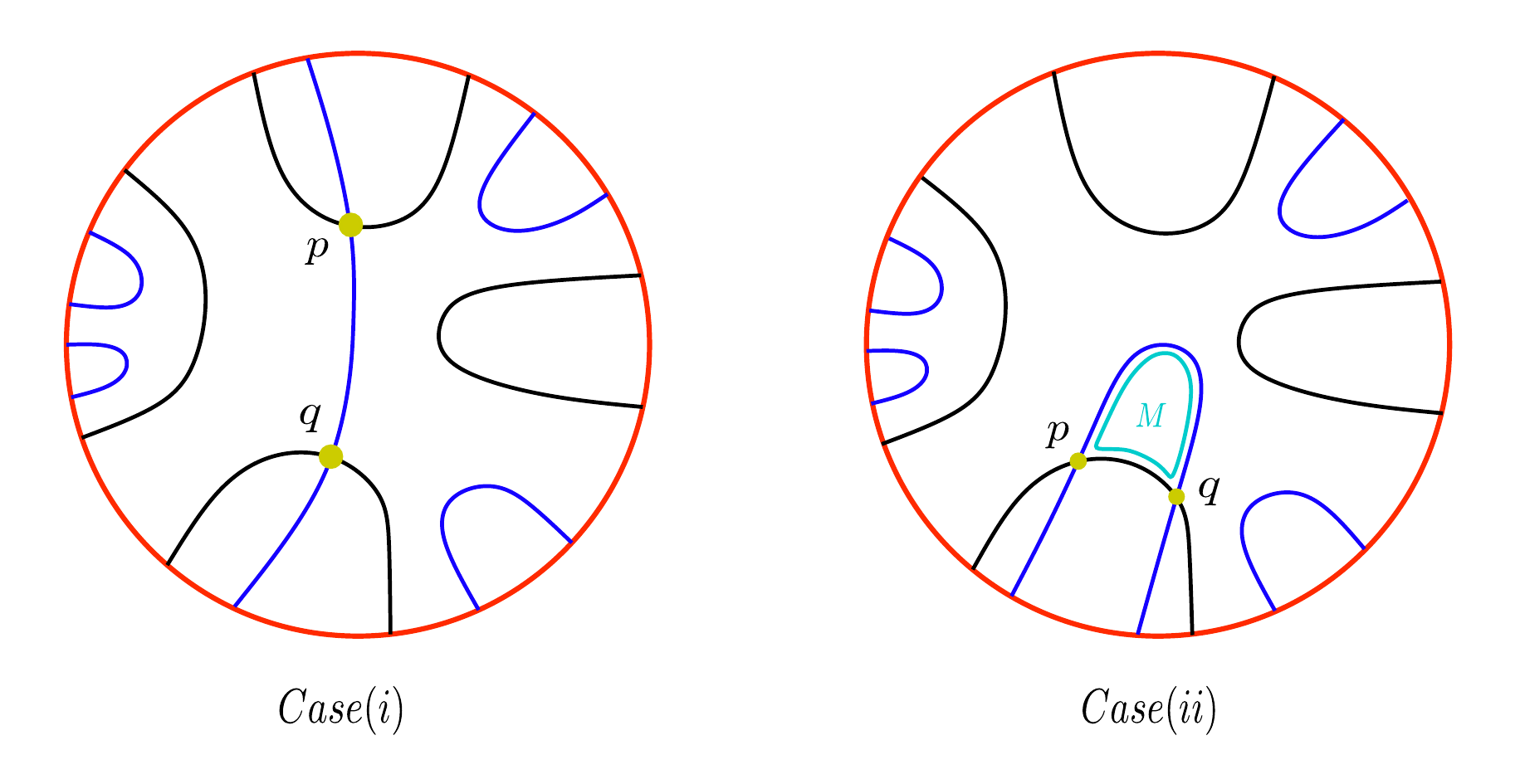}
    \caption{Schematic diagram}
    \label{fig: Preliminary lemma to prove slimness}
\end{figure}

Now we assume that $\beta'$ and $\gamma'$ are not disjoint from $\alpha'$ (otherwise we are done by the above argument after possibly swapping $\beta'$ and $\gamma'$).
To finish the proof, we claim that $$\pi_{\alpha'}(\beta')\cap\pi_{\alpha'}(\gamma')\neq\emptyset.$$

If there are connected components, one from each of $\beta'\setminus\alpha'$ and $\gamma'\setminus\alpha'$, that intersect twice, then there is a bigon $M$ between $\beta'$ and $\gamma'$ whose closure is disjoint from $\alpha'$.
See the right hand picture of Figure~\ref{fig: Preliminary lemma to prove slimness}. 
In Figure~\ref{fig: Preliminary lemma to prove slimness}, the blue arcs are connected components of $\beta'\backslash \alpha'$ and the black arcs are connected components of $\gamma'\backslash \alpha'$. 
It is not hard to see that there is a choice of equator intersecting $M$ that belongs to $\pi_{\alpha'}(\beta')\cap\pi_{\alpha'}(\gamma')$. The other cases are similar.
\end{proof}

\begin{lemma}\label{Slimness}
For any triple $\alpha,\beta,\gamma \in \mathcal{C}^{\dagger}_{\varepsilon}(S^2)$, the `geodesic guesses' $\mathcal{L}(\alpha,\beta)$, $\mathcal{L}(\beta,\gamma)$ and $\mathcal{L}(\gamma,\alpha)$ satisfy the conditions stated in Lemma~\ref{Guessing Geodesic Lemma}.\end{lemma}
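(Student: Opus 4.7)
The plan is to verify both hypotheses of Lemma~\ref{Guessing Geodesic Lemma}. For condition (2), bounded diameter of $\mathcal{L}(\alpha,\beta)$ when $d(\alpha,\beta)\leq 1$, the case $\alpha=\beta$ is immediate. Otherwise $\alpha\pitchfork\beta$ with $|\alpha\cap\beta|\leq 2$, and every pair $(\alpha_0,\beta_0)\in P(\alpha,\beta)$ satisfies $d(\alpha_0,\beta_0)\leq 3$. For such a pair, any vertex of $\mathcal{L}_\pitchfork(\alpha_0,\beta_0)$ lies in some $\pi_{\alpha'}(\beta')\cup\pi_{\beta'}(\alpha')$, and the quasi-path assembled in the proof of Lemma~\ref{coarsely connected} -- using the inclusion $\pi_{\alpha'}(\beta_0)\subseteq\pi_{\alpha'}(\beta')$, the diameter bound of Lemma~\ref{projection gives you uniformly bounded set}, and the successive shrinking supplied by Lemmas~\ref{projection of largest subarc} and~\ref{shrink beta to get a quasi path} -- connects such a vertex to $\alpha_0$ (hence to $\alpha$) with total cost bounded by an absolute constant.

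For condition (1), let $\alpha,\beta,\gamma\in\mathcal{C}^\dagger_\varepsilon(S^2)$ and let $\eta\in\mathcal{L}(\alpha,\beta)$. By definition $\eta$ lies in the $8$-neighbourhood of $\pi_{\alpha'}(\beta')\cup\pi_{\beta'}(\alpha')$ for some admissible pair $(\alpha',\beta')$ with $\alpha'\subseteq\alpha_0$, $\beta'\subseteq\beta_0$, where $\alpha_0\sim\alpha$, $\beta_0\sim\beta$ and $\alpha_0\pitchfork\beta_0$. Applying Lemma~\ref{shrink to minimal pair} we may pass to a minimal pair, and Lemma~\ref{symmetric} (diameter bound $18$ on minimal projections from both sides) allows us to assume $\eta\in\pi_{\alpha'}(\beta')$ at bounded additive cost. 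Choose $\gamma_0\sim\gamma$ (possibly $\gamma_0=\gamma$) in general position with $\alpha_0$, $\beta_0$, $\alpha'$, $\beta'$; then $\mathcal{L}_\pitchfork(\alpha_0,\gamma_0)\subseteq\mathcal{L}(\alpha,\gamma)$ and $\mathcal{L}_\pitchfork(\gamma_0,\beta_0)\subseteq\mathcal{L}(\gamma,\beta)$.

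The heart of the argument is to produce a connected subarc $\gamma'\subseteq\gamma_0$ together with a choice of side such that either $(\alpha',\gamma')$ or $(\gamma',\beta')$ is admissible for $\varepsilon$ while simultaneously satisfying the intersection bound $|\beta'\cap\gamma'|\leq 2$ or $|\alpha'\cap\gamma'|\leq 2$ required by Lemma~\ref{lemma for the uniform boundedness of the guessing geodesic}. Given such a $\gamma'$, Lemma~\ref{lemma for the uniform boundedness of the guessing geodesic} produces a common vertex $\zeta\in\pi_{\alpha'}(\beta')\cap\pi_{\alpha'}(\gamma')$; then $\zeta\in\mathcal{L}_\pitchfork(\alpha_0,\gamma_0)\subseteq\mathcal{L}(\alpha,\gamma)$, while Lemma~\ref{projection gives you uniformly bounded set} forces $d(\eta,\zeta)\leq 8$. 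To obtain $\gamma'$, start with $\gamma_0$ and successively truncate from each end to reduce $|\gamma_0\cap(\alpha'\cup\beta')|$; minimality of $(\alpha',\beta')$ guarantees that one such truncation yields an admissible pair with the opposite side, since otherwise both $\alpha'\cup\gamma'$ and $\gamma'\cup\beta'$ would leave a complementary region of area strictly greater than $1-\varepsilon$, contradicting the minimal (borderline-admissible) structure of $(\alpha',\beta')$.

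The main obstacle I expect is the case analysis underlying the construction of $\gamma'$. Several degenerate configurations -- for instance when $\gamma_0$ is trapped inside a small-area complementary region of $\alpha'\cup\beta'$, when $\gamma_0$ fails to intersect $\alpha'$ or $\beta'$, or when truncations only yield admissible pairs that share too many intersections -- must be handled by falling back on Lemma~\ref{trivial intersections} or Lemma~\ref{We can straighten the arc in side R} to place $\eta$ directly within bounded distance of $\gamma_0$, and hence of $\gamma\in\mathcal{L}(\alpha,\gamma)\cap\mathcal{L}(\gamma,\beta)$. The resulting constant $\lambda$ is an absolute combination of the bounds $8$, $18$, and the small constants appearing in Lemmas~\ref{trivial intersections}, \ref{We can straighten the arc in side R}, and~\ref{lemma for the uniform boundedness of the guessing geodesic}, so it can be chosen independently of $\varepsilon$, as required for the remark following Lemma~\ref{Guessing Geodesic Lemma} and for the uniform hyperbolicity discussion in Section~\ref{section-New graph}.
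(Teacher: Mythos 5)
Your overall architecture (bounded diameter for adjacent pairs, then slimness via minimal pairs and Lemma~\ref{symmetric}) matches the paper's, but both halves contain a genuine gap.

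For the bounded-diameter condition, you propose to connect a vertex of $\pi_{\alpha_0'}(\beta_0')$ to $\alpha_0$ by re-running the quasi-path of Lemma~\ref{coarsely connected} and assert the ``total cost [is] bounded by an absolute constant.'' It is not: that path is built by shrinking $\alpha_0'$ one intersection point at a time, so its length is on the order of $|\alpha_0\cap\beta_0|$, and adjacency of $\alpha$ to $\alpha_0$ and of $\beta$ to $\beta_0$ places no bound on $|\alpha_0\cap\beta_0|$. The paper instead makes a chain of \emph{boundedly many} bounded jumps: it introduces a perturbation $\mu$ of $\beta$ adjacent to $\alpha,\beta,\beta_0$, uses Lemma~\ref{lemma for the uniform boundedness of the guessing geodesic} (with $|\beta_0\cap\mu|\leq 2$) to land in $\pi_{\alpha_0'}(\mu)$, shrinks to a minimal pair $(\mu',\alpha_0'')$ via Lemma~\ref{shrink to minimal pair}, swaps sides using Lemma~\ref{symmetric} to reach $\pi_{\mu'}(\alpha_0'')\supseteq\pi_{\mu'}(\alpha_0)$, applies Lemma~\ref{lemma for the uniform boundedness of the guessing geodesic} once more to reach $\pi_{\mu'}(\alpha)$, and finishes with Lemma~\ref{trivial intersections} since $\alpha$ and $\mu$ are adjacent. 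Without some such side-swapping device your argument does not produce a uniform constant.

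For slimness, the decisive step in your sketch is to find $\gamma'\subseteq\gamma_0$ with $(\alpha',\gamma')$ admissible \emph{and} $|\beta'\cap\gamma'|\leq 2$, so that Lemma~\ref{lemma for the uniform boundedness of the guessing geodesic} applies directly to the pair $(\beta',\gamma')$ projected to $\alpha'$. Nothing in your truncation argument controls $|\beta'\cap\gamma'|$: shrinking $\gamma_0$ until admissibility with one side fails says nothing about how often the resulting arc still crosses the other curve, and in general it will cross it many times. The paper avoids this entirely: it takes the minimal $\gamma_0'$ for which $(\gamma_0',\alpha_0'')$ is admissible but $(\gamma_0',\beta_0'')$ is \emph{inadmissible}, and then exploits the inadmissibility --- a complementary region of area greater than $1-\varepsilon$ --- to produce an equator $\mu$ \emph{disjoint} from $\beta_0''\cup\gamma_0'$. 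Lemma~\ref{lemma for the uniform boundedness of the guessing geodesic} is then applied twice, through the bridge $\mu$ (first to the pair $\beta_0'',\mu$, then to the pair $\mu,\gamma_0'$), each time with intersection number zero. This intermediate equator is the missing idea; as written, your construction of $\gamma'$ would fail whenever $\gamma_0$ crosses $\beta'$ more than twice on every admissible truncation, which is the generic situation.
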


\begin{proof}
First, we claim that if $d_{\mathcal{C}^{\dagger}_{\varepsilon}(S^2)}(\alpha,\beta)=1$, then $\mathrm{diam}(\mathcal{L}(\alpha,\beta))$ is uniformly bounded.
Let $\eta\in \pi_{\alpha_0'}(\beta_0')$ be an element in $\mathcal{L}_\pitchfork(\alpha_0,\beta_0)$.
We pick $\mu$ to be a $\varepsilon$-balanced curve that is a perturbation of $\beta$, such that it is adjacent to $\alpha,\beta,\beta_0$.
By considering a further perturbation of $\mu$, we may furthermore assume that $\mu\pitchfork\alpha_0$.
Since $\beta_0'\subseteq \beta_0$, it is the case that $\pi_{\alpha'_0}(\beta_0)\subseteq \pi_{\alpha'_0}(\beta_0')$.
Since $\beta_0\pitchfork \mu$ with $|\beta_0\cap \mu|\leq 2$, by Lemma~\ref{lemma for the uniform boundedness of the guessing geodesic}, we know that there is some equator $\eta_1\in \pi_{\alpha_0'}(\beta_0)\cap \pi_{\alpha_0'}(\mu)$.
By Lemma~\ref{projection gives you uniformly bounded set}, $\eta$ and $\eta_1$ are $8$-close.
Then pick $\mu'\subseteq \mu$ and $\alpha_0''\subseteq \alpha_0'$ such that $(\mu',\alpha_0'')$ is a minimal pair, using Lemma~\ref{shrink to minimal pair}, so there is some $\eta_2\in\pi_{\alpha_0''}(\mu')\cap\pi_{\alpha_0'}(\mu)$.
By Lemma~\ref{projection gives you uniformly bounded set}, $\eta_1$ and $\eta_2$ are $8$-close.
Then by Lemma~\ref{symmetric}, we know that $\mathrm{diam}(\pi_{\alpha_0''}(\mu')\cup \pi_{\mu'}(\alpha_0''))\leq 18$. 
By definition, we have that $\pi_{\mu'}(\alpha_0)\subset \pi_{\mu'}(\alpha_0'')$.
Now $\alpha_0$ is adjacent to $\alpha$, so by Lemma~\ref{lemma for the uniform boundedness of the guessing geodesic} we know there is $\eta_3\in\pi_{\mu'}(\alpha_0)\cap \pi_{\mu'}(\alpha)$. 
Therefore $\eta_2$ and $\eta_3$ are 18-close.
Finally, since $\alpha$ and $\mu$ are adjacent, any equator in $\pi_{\mu'}(\alpha)$ is within distance $2$ to $\alpha$, by Lemma~\ref{trivial intersections}, and so we are done by the triangle inequality.

We need to show the uniform slimness of the geodesic guesses, i.e. that $\mathcal{L}(\alpha,\beta)$ is contained in a uniform neighbourhood of $\mathcal{L}(\beta,\gamma)\cup\mathcal{L}(\gamma,\alpha)$. Note that this is immediately true if $\alpha,\beta,\gamma$ are not distinct. So suppose from now on that they are distinct.

It is enough to do the following: given any $\alpha_0'\subseteq \alpha_0$ and $\beta_0'\subseteq \beta_0$ with the following conditions
\begin{enumerate}
    \item $\alpha$ is adjacent to $\alpha_0$,
    \item $\beta$ is adjacent to $\beta_0$, 
    \item  $\alpha_0\pitchfork\beta_0$, and
    \item $\alpha_0'$ and $\beta_0'$ are admissible for $\varepsilon$,
\end{enumerate}
show that $\eta\in\pi_{\alpha_0'}(\beta_0')$ is uniformly close to either $\mathcal{L}(\alpha_0,\gamma_0)$ or $\mathcal{L}(\beta_0,\gamma_0)$, for some $\gamma_0$ adjacent to $\gamma$, such that $\gamma_0\pitchfork\alpha_0$ and $\gamma_0\pitchfork\beta_0$.

So to do this, suppose we are given such a curve $\eta$ as above. 
Then we pick $\gamma_0$ such that $\alpha_0\cap \beta_0 \cap \gamma_0=\emptyset$ and $\gamma_0$ intersects $\alpha_0,\beta_0$ transversely (if at all). 
Pick $\eta_1\in\pi_{\alpha_0''}(\beta_0'')\cap\pi_{\alpha_0'}(\beta_0')$ as in Lemma~\ref{shrink to minimal pair}.
Then $\eta$ and $\eta_1$ are $8$-close.
Pick arbitrary $\eta_2\in\pi_{\alpha_0''}(\beta_0'')\cup\pi_{\beta_0''}(\alpha_0'')$.
The diameter is at most $18$ by Lemma~\ref{symmetric}, so $\eta_1$ and $\eta_2$ are $18$-close.
Because there are no triple intersections of $\alpha_0,\beta_0,\gamma_0$, there exists a minimal compact subarc $\gamma_0'\subseteq\gamma_0$ such that (without loss of generality) $\gamma_0'$ and $\alpha_0''$ are admissible for $\varepsilon$ but $\gamma_0'$ and $\beta_0''$ are inadmissible for $\varepsilon$.
The without loss of generality here, by swapping the roles of $\alpha$ and $\beta$, is justified because we only care about the distance from the set $\pi_{\alpha_0''}(\beta_0'')\cup\pi_{\beta_0''}(\alpha_0'')$ to either $\mathcal{L}(\alpha_0,\gamma_0)$ or $\mathcal{L}(\beta_0,\gamma_0)$ at this point in the proof.

There exists an equator $\mu$ disjoint from $\beta_0''\cup\gamma_0'$ because $\beta_0''$ and $\gamma_0'$ are inadmissible for $\varepsilon$.
It is clear that $\mu$ and $\alpha_0''$ are admissible for $\varepsilon$ because $\mu$ is an equator.
By Lemma~\ref{lemma for the uniform boundedness of the guessing geodesic}, any vertex of $\pi_{\alpha_0''}(\beta_0'')$ is close to $\pi_{\alpha_0''}(\mu)$, and any vertex of $\pi_{\alpha_0''}(\mu)$ is close to $\pi_{\alpha_0''}(\gamma_0')$. This shows that $\eta$ is uniformly close to $\mathcal{L}(\alpha,\gamma)$, and we are done.
\end{proof}
Hence, we have verified that $\mathcal{C}^{\dagger}_{\varepsilon}(S^{2})$ is hyperbolic. In fact, they all have a hyperbolicity constant that is independent of $\varepsilon$.

\begin{remark}
Our methods still work if the vertices of the $\varepsilon$-balanced curve graph are allowed to be continuous (not necessarily smooth).
\end{remark}
\section{Constructing quasimorphisms}\label{section-BF quasimorphisms}
In this section, we're going to use Bestvina--Fujiwara \cite{bestvina2002bounded} to prove Theorem \ref{Q(Diff) is infinite dimensional}. 
We first need to find diffeomorphisms that act hyperbolically (i.e. with positive asymptotic translation length) and that are independent (i.e. there is a bound on how far their (oriented) quasi-axes fellow travel even after applying any area-preserving diffeomorphism to either axis). 
We will provide the rigorous definition of what it means for two hyperbolic elements to be independent.
We also make the following convention: In this section, $G$ denotes an abstract group and $X$ denotes a hyperbolic space such that $G$ acts on $X$ by isometries.

\subsection{Run the Bestvina--Fujiwara machine in the case of \texorpdfstring{$\Ham(S^2)$}{Ham(S2)}}
Recall that for $0<\varepsilon_{1}<\varepsilon_{2}\leq \frac{1}{2}$ the embedding $i\colon\mathcal{C}_{\varepsilon_{2}}^\dagger(S^2)\rightarrow \mathcal{C}_{\varepsilon_{1}}^\dagger(S^2)$ is 1-Lipschitz. 
According to Lemma~\ref{Witness for the big one but not for the small}, we pick a subsurface $W\subset S^2$, homeomorphic to $S_{0,n}$ for some $n\geq 4$, which is a witness for $\mathcal{C}^{\dagger}_{\varepsilon_2}(S^2)$ but with a boundary component in $\mathcal{C}^\dagger_{\varepsilon_1}(S^2)$.
According to Lemma~\ref{hyperbolics}, we can find $f\in \Ham(S^2)$ such that $f(W)=W$ and $f|_W$ represents a pseudo-Anosov pure mapping class of $W$.

\begin{lemma}\label{hyperbolic in two but elliptic in one}
$f$ acts hyperbolically (i.e. loxodromically) on $\mathcal{C}_{\varepsilon_{2}}^{\dagger}(S^{2})$. Moreover, $f$ acts elliptically on $\mathcal{C}_{\varepsilon_{1}}^{\dagger}(S^{2})$.

\end{lemma}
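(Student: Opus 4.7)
The plan is to dispatch the two assertions separately, since both follow quickly from the construction of $f$ and $W$ already in place. For the first claim, I would simply invoke Lemma~\ref{hyperbolics} directly: by hypothesis $W$ is a witness for $\mathcal{C}_{\varepsilon_2}^{\dagger}(S^2)$, it is homeomorphic to $S_{0,n}$ with $n\geq 4$, and $f|_W$ represents a pseudo-Anosov pure mapping class. That lemma then yields precisely that $f$ acts hyperbolically on $\mathcal{C}_{\varepsilon_2}^{\dagger}(S^2)$, and no extra work is required.

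For the second claim, the key observation is that, by the construction in Lemma~\ref{Witness for the big one but not for the small}, one boundary component of $W$, call it $\eta$, is an $\varepsilon_1$-balanced curve and hence a vertex of $\mathcal{C}_{\varepsilon_1}^{\dagger}(S^2)$. I would then revisit the proof of Lemma~\ref{hyperbolics}: there, $f$ is assembled as a composition of Dehn twists, each realized (via the Lagrangian Neighborhood Theorem applied to a thin annular neighborhood of the twisted curve) by a Hamiltonian diffeomorphism supported in the interior of $W$. Consequently $f$ itself is supported in the interior of $W$, and in particular $f$ fixes $\partial W$ pointwise. Therefore $f^k(\eta)=\eta$ for every $k\in\mathbb{Z}$, so the orbit of $\eta$ is the singleton $\{\eta\}$, which is trivially bounded, and by the definition of an elliptic isometry, $f$ acts elliptically on $\mathcal{C}_{\varepsilon_1}^{\dagger}(S^2)$.

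The main point to watch is the distinction between $f$ fixing $\eta$ as a subset of $S^2$ (which is what the action on the balanced curve graph actually sees, since its vertices are smooth simple closed curves and not isotopy classes) and $f|_W$ merely representing a mapping class fixing the isotopy class $[\eta]$. The representative produced inside Lemma~\ref{hyperbolics} resolves this at once; were one forced to work with an arbitrary representative of the pure mapping class, one could still achieve what we need by post-composing with an isotopy supported near $\partial W$, but no such correction is necessary here. Beyond this bookkeeping, I do not anticipate any substantive obstacle.
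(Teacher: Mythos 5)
Your proposal is correct and follows essentially the same route as the paper: hyperbolicity on $\mathcal{C}_{\varepsilon_2}^{\dagger}(S^2)$ comes straight from Lemma~\ref{hyperbolics}, and ellipticity on $\mathcal{C}_{\varepsilon_1}^{\dagger}(S^2)$ comes from $f$ fixing the $\varepsilon_1$-balanced boundary component of $W$. Your extra remark distinguishing setwise fixing of the curve from fixing its isotopy class is a worthwhile clarification of a point the paper leaves implicit, but it does not change the argument.
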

\begin{proof}

According to the proof of Theorem~\ref{infinite diameter}, we have that $d_{\mathcal{C}^{\dagger}_{\varepsilon}(S^{2})}(f^n(\alpha),\alpha)\geq \lfloor \frac{Cn}{4} \rfloor \geq n$. Hence, by definition, we have that $|f|>0$, i.e. $f$ is a hyperbolic (i.e. loxodromic) element of $\mathcal{C}_{\varepsilon_{2}}^{\dagger}(S^{2})$.
Moreover, $f$ fixes each boundary component of $W$, which as mentioned above, includes an  $\varepsilon_{1}$-balanced curve. Hence, $f$ fixes a point of $\mathcal{C}_{\varepsilon_{1}}^{\dagger}(S^{2})$, and so $f$ is an elliptic element as required.    
\end{proof}

By the same argument one can find an element $g$ acting hyperbolically but instead on $\mathcal{C}_{\varepsilon_{1}}^{\dagger}(S^{2})$. Since the embedding $i$ from $\mathcal{C}_{\varepsilon_{2}}^{\dagger}(S^{2})$ to $\mathcal{C}_{\varepsilon_{1}}^{\dagger}(S^{2})$ is $1$-Lipschitz, we can deduce that $g$ also acts hyperbolically on $\mathcal{C}_{\varepsilon_{2}}^{\dagger}(S^{2})$.

\begin{definition}\cite{bestvina2002bounded}\label{dependent}
Let $X$ be a $\delta$-hyperbolic geodesic metric space.
Let $g_{1},g_{2}\in G$ be two distinct hyperbolic elements with $(K,L)$-quasi-axes $A_{1}$ and $A_{2}$ in $X$. 
We say that $g_{1}$ and $g_{2}$ are dependent, denoted by $g_{1}\sim g_{2}$ if there exists $B=B(K,L,\delta)$, such that for an arbitrarily long segment $J$ in $A_{1}$, there exists an element $h\in G$ such that $h(J)$ is within the $B$-neighborhood of $A_{2}$ (preserving orientation). 
We say that $g_{1}$ and $g_{2}$ are \emph{independent} if $g_{1}\nsim g_{2}$. 
\end{definition}
\begin{remark}
$\sim$ is an equivalence relation. (see \cite{bestvina2002bounded} for details.)
\end{remark}

The following is \cite[Theorem~1]{bestvina2002bounded}.

\begin{theorem}\label{BestvinaFujiwara machinery}
Suppose a group $G$ acts on a hyperbolic graph $X$ by isometries. Suppose that $g_{1},g_{2}\in G$ act hyperbolically on $X$ and that $g_{1}\nsim g_{2}$. Then the space of homogeneous quasimorphisms on $G$ is infinite-dimensional.
\end{theorem}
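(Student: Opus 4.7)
The plan is to use the counting quasimorphism construction of Epstein--Fujiwara, as adapted by Bestvina--Fujiwara. For a finite sequence of consecutive vertices $w=(w_0,w_1,\dots,w_k)$ in $X$ and for any edge path $\alpha$ in $X$, define $|\alpha|_w$ to be the maximum number of pairwise disjoint subpaths of $\alpha$ that agree with a $G$-translate of $w$. Set
\[
c_w(x,y)\;=\;d(x,y)\;-\;\inf_{\alpha}\bigl(|\alpha|-|\alpha|_w\bigr),
\]
where the infimum runs over all edge paths in $X$ from $x$ to $y$. Fix a basepoint $x_0\in X$ and let $\bar w$ denote $w$ with reversed orientation. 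Define
\[
h_w(g)\;=\;c_w(x_0,gx_0)\;-\;c_{\bar w}(x_0,gx_0),
\]
and let $\widetilde h_w$ be its homogenization $\widetilde h_w(g)=\lim_{n\to\infty}h_w(g^n)/n$.

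First I would verify the basic analytic properties: using a thin-triangles argument, $h_w$ is shown to satisfy a quasi-cocycle inequality with defect depending only on the hyperbolicity constant $\delta$ and on $|w|$; this gives that $\widetilde h_w$ exists, is a homogeneous quasimorphism, and is invariant under conjugation. The changes of basepoint and the symmetrization by $\bar w$ are what make $\widetilde h_w$ genuinely alternating rather than a length cocycle; this is routine but needs the $\delta$-hyperbolicity of $X$.

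Next I would construct an infinite family $(w_n)_{n\geq 1}$ of words and corresponding test elements $(u_n)_{n\geq 1}$ of $G$ witnessing linear independence. The natural choice is $u_n=g_1^N g_2^n$ for a sufficiently large fixed $N$; one checks via a standard ping-pong argument on the axes of $g_1,g_2$ (which is available precisely because $g_1\nsim g_2$) that $u_n$ is hyperbolic for every $n$, and that the axes $A_{u_n}$ pairwise diverge. Let $w_n$ be a long enough initial segment of the axis of $u_n$, chosen of length greater than the Bestvina--Fujiwara constant $B(K,L,\delta)$ in Definition~\ref{dependent}. The aim is to show that the matrix $\bigl(\widetilde h_{w_n}(u_m)\bigr)_{n,m}$ is (upper) triangular with nonzero diagonal entries, so that any finite subfamily is linearly independent.

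The main obstacle, and the step that uses $g_1\nsim g_2$ essentially, is producing the off-diagonal vanishing $\widetilde h_{w_n}(u_m)=0$ (or at least a controlled triangular structure) for $m<n$. If a translate of $w_n$ occurred along the axis of $u_m$, then restricting to a subsegment $J\subset w_n$ of arbitrary length would give $h\in G$ sending $J$ into a $B$-neighbourhood of $A_{u_m}$; chasing this along powers and using the fact that $A_{u_n}$ itself tracks a long product of segments from $A_{g_1}$ and $A_{g_2}$, one factors through to realise $g_1$ and $g_2$ as dependent in the sense of Definition~\ref{dependent}, contradicting the hypothesis. Hence, for suitable $N$ and lengths of $w_n$, translates of $w_n$ can occur along geodesics representing $u_m$ only when $m\geq n$, and the diagonal entry $\widetilde h_{w_n}(u_n)$ is nonzero because $w_n$ is itself a long segment of the axis of $u_n$ (and its reverse $\bar w_n$ does not occur, again by independence). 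This yields an infinite-dimensional subspace of homogeneous quasimorphisms, completing the proof.
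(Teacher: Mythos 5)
This theorem is not proved in the paper: it is quoted as \cite[Theorem~1]{bestvina2002bounded}, so the only thing to compare your sketch against is the original Bestvina--Fujiwara argument, which your proposal does follow in outline (counting quasimorphisms $c_w$, here with the parameter $R=1$, test elements that are words in $g_1,g_2$, and a triangular matrix $\bigl(\widetilde h_{w_n}(u_m)\bigr)$ with nonzero diagonal). One small inaccuracy: the defect of $c_w$ is bounded in terms of $\delta$ and the quasi-geodesic constants of the axes but \emph{independently} of $|w|$; this uniformity is what Fujiwara's thin-triangle estimates actually give. For pure linear independence your weaker statement is harmless, but as written it is not what one proves.

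The genuine gap is in the diagonal entries. For $\widetilde h_{w_n}(u_n)\neq 0$ you need not only that translates of $w_n$ occur along the axis of $u_n$ (automatic), but that no $G$-translate of the reversed word $\bar w_n$ occurs there --- essentially that $u_n\nsim u_n^{-1}$. You justify this ``again by independence,'' but $g_1\nsim g_2$ does not rule out orientation reversal: if some $h\in G$ simultaneously flips long segments of the axes of $g_1$ and $g_2$, then for $u_n=g_1^Ng_2^n$ long segments of $A_{u_n}$ can be carried, orientation-reversed, back onto $A_{u_n}$, so copies of $\bar w_n$ appear and can cancel the count, making $\widetilde h_{w_n}(u_n)=0$. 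Bestvina--Fujiwara's actual proof isolates this as the hypothesis $f_i^{-1}\nsim f_j$ for all $i\leq j$ (which includes $f_i\nsim f_i^{-1}$) in their Proposition~2, and then derives Theorem~1 by a case analysis on whether $g_i\sim g_i^{-1}$, choosing deliberately asymmetric words (for instance $g_1^{a_1}g_2g_1^{a_2}g_2\cdots$ with distinct exponents) to defeat any potential axis-reversing symmetry. Without this step the nonvanishing of your diagonal is unproved, and the linear-independence argument is incomplete.
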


\begin{lemma}\label{independent}
The maps $f,g\in \Ham(S^2)$ defined above satisfy that 
$f\nsim g$ when we look at their actions on $\mathcal{C}_{\varepsilon_{2}}^{\dagger}(S^{2})$.

\end{lemma}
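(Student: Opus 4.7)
The strategy is proof by contradiction, exploiting the asymmetry of $f$ and $g$ between the two graphs $\mathcal{C}_{\varepsilon_2}^\dagger(S^2)$ and $\mathcal{C}_{\varepsilon_1}^\dagger(S^2)$. By Lemma~\ref{hyperbolic in two but elliptic in one}, $f$ is hyperbolic on $\mathcal{C}_{\varepsilon_2}^\dagger$ but elliptic on $\mathcal{C}_{\varepsilon_1}^\dagger$, whereas $g$ is hyperbolic on both (so the orbit $\{g^k y_0\}_k$ is a quasi-axis in both graphs simultaneously). The $1$-Lipschitz inclusion $i\colon \mathcal{C}_{\varepsilon_2}^\dagger \to \mathcal{C}_{\varepsilon_1}^\dagger$ serves as the bridge between the two settings.

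The plan is to suppose for contradiction that $f \sim g$, with associated constant $B$ as in Definition~\ref{dependent}. Fix basepoints $x_0, y_0$ so that $A_f = \{f^k x_0\}_k$ and $A_g = \{g^k y_0\}_k$ are quasi-axes of $f$ and $g$ in $\mathcal{C}_{\varepsilon_2}^\dagger$. Since $f$ is elliptic on $\mathcal{C}_{\varepsilon_1}^\dagger$, the orbit $A_f$ has finite diameter $C_0$ in $\mathcal{C}_{\varepsilon_1}^\dagger$. Choose segments $J_n \subset A_f$ with $\mathrm{diam}_{\mathcal{C}_{\varepsilon_2}^\dagger}(J_n)\to\infty$; necessarily $\mathrm{diam}_{\mathcal{C}_{\varepsilon_1}^\dagger}(J_n) \leq C_0$ for all $n$. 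By dependence, for each $n$ there is some $h_n \in \Ham(S^2)$ with $h_n(J_n) \subset N_B(A_g)$ in $\mathcal{C}_{\varepsilon_2}^\dagger$. Since $h_n$ is an isometry of both graphs, the image $h_n(J_n)$ inherits the same two diameter estimates: unbounded in $\mathcal{C}_{\varepsilon_2}^\dagger$, and bounded by $C_0$ in $\mathcal{C}_{\varepsilon_1}^\dagger$.

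The contradiction will come from comparing two points $p_n, q_n \in h_n(J_n)$ realising the $\mathcal{C}_{\varepsilon_2}^\dagger$-diameter, together with $g$-orbit points $g^{i_n} y_0, g^{j_n} y_0 \in A_g$ within $\mathcal{C}_{\varepsilon_2}^\dagger$-distance $B$ of $p_n$ and $q_n$ respectively, furnished by $h_n(J_n) \subset N_B(A_g)$. On one hand, the triangle inequality in $\mathcal{C}_{\varepsilon_2}^\dagger$ gives $d_{\mathcal{C}_{\varepsilon_2}^\dagger}(g^{i_n} y_0, g^{j_n} y_0) \geq \mathrm{diam}_{\mathcal{C}_{\varepsilon_2}^\dagger}(h_n(J_n)) - 2B \to \infty$, and the quasi-geodesic property of $A_g$ in $\mathcal{C}_{\varepsilon_2}^\dagger$ then forces $|i_n - j_n|\to\infty$; hence $d_{\mathcal{C}_{\varepsilon_1}^\dagger}(g^{i_n} y_0, g^{j_n} y_0) \to \infty$ as well, since $A_g$ is a quasi-geodesic in $\mathcal{C}_{\varepsilon_1}^\dagger$ too. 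On the other hand, the $1$-Lipschitz property gives $d_{\mathcal{C}_{\varepsilon_1}^\dagger}(g^{i_n} y_0, p_n), d_{\mathcal{C}_{\varepsilon_1}^\dagger}(q_n, g^{j_n} y_0) \leq B$, and combined with the uniform $C_0$ bound on $\mathrm{diam}_{\mathcal{C}_{\varepsilon_1}^\dagger}(h_n(J_n))$ yields
\[
d_{\mathcal{C}_{\varepsilon_1}^\dagger}(g^{i_n} y_0, g^{j_n} y_0) \;\leq\; 2B + C_0,
\]
which is the desired contradiction.

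The argument is essentially routine once the setup is in place. The main conceptual observation is that the orientation-preservation clause of Definition~\ref{dependent} plays no role here: all one really uses is the containment $h_n(J_n) \subset N_B(A_g)$. The only minor technicality is that one must select actual $g$-orbit points (rather than arbitrary approximants) to invoke the quasi-axis estimates in both metrics, but this is immediate because $A_g$ \emph{is} the $g$-orbit of $y_0$.
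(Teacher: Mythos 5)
Your proof is correct and takes essentially the same approach as the paper: a contradiction argument transferring information through the $1$-Lipschitz inclusion $i$, using that $f$'s axis collapses in $\mathcal{C}_{\varepsilon_1}^\dagger$ while $g$'s remains an unbounded quasi-geodesic there, and that the element $h$ from the dependence relation acts by isometries on both graphs. The only cosmetic difference is that you map long segments of $A_f$ near $A_g$ (the literal direction of Definition~\ref{dependent}), whereas the paper maps long segments of $A_g$ near $A_f$ using the symmetry of $\sim$; both yield the same contradiction.
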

\begin{proof}
We prove this by contradiction. Let $\alpha\in \mathcal{C}_{\varepsilon_{2}}^{\dagger}(S^{2})$. Assume $f\sim g$. We denote by $A_{f}=(f^{n}(\alpha))_{n\in \mathbb{Z}}$ the $K$-quasi-axis for $f$ and we denote by $A_{g}$ the $K'$-quasi-axis for $g$. Let $B=B(K,K',\delta)$ be the constant defining the relation $\sim$. Since $f$ acts elliptically on $\mathcal{C}_{\varepsilon_{1}}^{\dagger}(S^{2})$, we have that $i(A_{f})$ is a bounded set in $\mathcal{C}_{\varepsilon_{1}}^{\dagger}(S^{2})$, i.e. we have $\mathrm{diam}(i(A_{f}))=D$ for some constant $D$. On the other hand, $g$ acts hyperbolically on $\mathcal{C}_{\varepsilon_{1}}^{\dagger}(S^{2})$. Therefore, $i(A_{g})$ is still a bi-infinite quasi-geodesic in $\mathcal{C}_{\varepsilon_{1}}^{\dagger}(S^{2})$. For any segment $J$ in $A_{g}$, there exists an element $h\in \Ham(S^2)$ such that $hJ\subset N_{B}(A_{f})$. After taking the 1-Lipschitz embedding $i$, we know that $i(hJ)\subset N_{B}(i(A_{f}))$. Hence, $i(J)$ has diameter at most $2B+D$ in $\mathcal{C}_{\varepsilon_{1}}^{\dagger}(S^{2})$. Clearly, picking $J$ large enough, we obtain a contradiction. Therefore, $f$ and $g$ are independent.
\end{proof}
\noindent As a corollary of Theorem~\ref{BestvinaFujiwara machinery} and Lemma~\ref{independent}, we know that we can construct infinitely many homogeneous quasimorphisms on $\Ham(S^2)$. 
Moreover, according to Bestvina--Fujiwara's work \cite{bestvina2002bounded}, we can write down these quasimorphisms explicitly. Here, we just briefly recall what they did. For the details, we refer to \cite{bestvina2002bounded}. 

With the same assumption as Theorem~\ref{BestvinaFujiwara machinery}. Let $w$ be a finite (oriented) path in $X$. Let $|w|$ denote the length of $w$. For $g\in G$ the composition $g\circ w$ is a copy of $w$. Obviously $|g\circ w|=|w|$. 

Let $\alpha$ be a finite path. We define
$$
|\alpha|_{w}=\{\text{the maximal number of non-overlapping copies of $w$ in $\alpha$}\}.
$$
Suppose that $x,y\in X$ are two vertices and that $R$ is an integer with $0<R<|w|$. We define the integer 
$$
c_{w,R}(x,y)=d(x,y)-\inf_{\alpha}(|\alpha|-R|\alpha|_{w}),
$$
where $\alpha$ ranges over all paths from $x$ to $y$. Fix a basepoint $x_{0}\in X$, we define $h_{w}:G\rightarrow \mathbb{R}$ by
$$
h_{w}(g)= c_{w,R}(x_{0},g(x_{0}))-c_{w^{-1},R}(x_{0},g(x_{0})).
$$
In the proof of Theorem~\ref{BestvinaFujiwara machinery}, every homogeneous quasimorphism comes from the homogenisation of $h_{w}$, where $w$ is a word generated by $g_{1}$ and $g_{2}$. We know the homogenisation of $h_{w}$ is defined as $$\widetilde{h_{w}}(g)= \lim_{n\rightarrow\infty} \frac{h_{w}(g^n)}{n}.$$

\begin{lemma}\label{a useful property of BF quasimorphism}(\cite{fujiwara1998second})
Let $x_0\neq y_0\in X$. Let $h_w$ and $h'_w$ be the Bestvina-Fujiwara quasimorphisms with basepoints $x_0$ and $y_0$, respectively. Then there exists a constant $B>0$ such that $|h_w(g)-h'_w(g)|\leq B$ for every $g\in G$.
\end{lemma}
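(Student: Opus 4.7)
The plan is to reduce the lemma to a coarse Lipschitz estimate on $c_{w,R}$, and then exploit the fact that $g$ acts by isometries. Specifically, I would prove the estimate
$$
|c_{w,R}(x,y) - c_{w,R}(x',y')| \leq 2\bigl(d(x,x') + d(y,y')\bigr)
$$
for all $x,y,x',y' \in X$. Applying this with $(x,y) = (x_0, g(x_0))$ and $(x',y') = (y_0, g(y_0))$, and using that $g$ is an isometry so that $d(g(x_0), g(y_0)) = d(x_0, y_0)$, bounds $|c_{w,R}(x_0, g(x_0)) - c_{w,R}(y_0, g(y_0))|$ by $4 d(x_0, y_0)$. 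The identical bound for $w^{-1}$ together with the triangle inequality then yields the lemma with $B = 8 d(x_0, y_0)$, which is independent of $g$ as required.

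To establish the Lipschitz estimate, I would handle the two pieces of $c_{w,R} = d(\cdot,\cdot) - \inf_\alpha(|\alpha| - R|\alpha|_w)$ separately. The distance term is controlled by two applications of the triangle inequality, giving $|d(x,y) - d(x',y')| \leq d(x,x') + d(y,y')$. For the infimum term, fix an arbitrary path $\alpha'$ from $x'$ to $y'$ and form a new path $\alpha$ from $x$ to $y$ by prepending a geodesic from $x$ to $x'$ and appending a geodesic from $y'$ to $y$. Then $|\alpha| \leq |\alpha'| + d(x,x') + d(y,y')$, and any family of pairwise non-overlapping copies of $w$ inside $\alpha'$ remains a family of pairwise non-overlapping copies of $w$ when viewed as subpaths of $\alpha$, so $|\alpha|_w \geq |\alpha'|_w$. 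Combining these,
$$
|\alpha| - R|\alpha|_w \leq |\alpha'| - R|\alpha'|_w + d(x,x') + d(y,y').
$$
Taking the infimum over $\alpha'$ and then over $\alpha$ gives one direction of the bound on the infimum term, and the reverse direction follows by swapping the roles of $(x,y)$ and $(x',y')$.

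The only genuine subtlety is the monotonicity statement $|\alpha|_w \geq |\alpha'|_w$ under extension of the path; this is immediate from the definition of $|\cdot|_w$ as the maximum number of pairwise non-overlapping copies of $w$, but it is where one uses that $R \geq 0$ (so that having more copies of $w$ lowers the functional $|\alpha| - R|\alpha|_w$). The remainder of the argument is routine bookkeeping with the triangle inequality, and I do not anticipate any further obstacle; the only care needed is to keep track of the constants so as to arrive at the clean value $B = 8 d(x_0, y_0)$.
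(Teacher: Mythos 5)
Your argument is correct, and it is essentially the standard proof of the cited result: the paper itself gives no argument beyond referring to Fujiwara's Lemma~3.8, whose proof is exactly your concatenation argument (extend a path from $x'$ to $y'$ by geodesics $[x,x']$ and $[y',y]$, use monotonicity of $|\cdot|_w$ under extension together with $0<R$, deduce that $c_{w,R}$ is $2$-Lipschitz in each variable, and then use that $g$ is an isometry to get a bound independent of $g$). The constant $B=8\,d(x_0,y_0)$ you obtain is exactly what this bookkeeping yields, so there is nothing to add.
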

\begin{proof}
See~\cite[Lemma~3.8]{fujiwara1998second}.
\end{proof}

\begin{proposition}\label{homogenisation of BF quasimorphism is independent of the choice of the basepoint}
$\widetilde{h_{w}}$ is independent of the choice of the basepoint $x_0$.
\end{proposition}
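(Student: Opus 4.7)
The plan is to deduce this immediately from Lemma~\ref{a useful property of BF quasimorphism}. Let $x_0, y_0 \in X$ be two choices of basepoint, and write $h_w$ and $h'_w$ for the associated Bestvina--Fujiwara quasimorphisms. I want to show that $\widetilde{h_w}(g) = \widetilde{h'_w}(g)$ for every $g \in G$.

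By Lemma~\ref{a useful property of BF quasimorphism}, there exists a constant $B > 0$, depending only on $x_0$ and $y_0$ (and not on the element being evaluated), such that $|h_w(\gamma) - h'_w(\gamma)| \leq B$ for all $\gamma \in G$. Applying this with $\gamma = g^n$ and dividing by $n$ yields
\[
\left| \frac{h_w(g^n)}{n} - \frac{h'_w(g^n)}{n} \right| \leq \frac{B}{n}.
\]
Letting $n \to \infty$, the right-hand side tends to $0$, so the two limits defining $\widetilde{h_w}(g)$ and $\widetilde{h'_w}(g)$ agree (and both exist by the standard subadditivity argument giving existence of the homogenisation of a quasimorphism).

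There is essentially no obstacle here: the bound between $h_w$ and $h'_w$ supplied by the previous lemma is uniform in the group element, so it is washed out by the $1/n$ rescaling in the homogenisation. The proof is a one-line consequence, and the only thing worth making explicit is that the uniformity of $B$ over all of $G$ (rather than $B$ depending on $\gamma$) is exactly what is needed to pass to the limit.
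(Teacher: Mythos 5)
Your argument is correct and is exactly the paper's approach: the paper's proof simply states that the proposition is an immediate consequence of Lemma~\ref{a useful property of BF quasimorphism}, and your write-up supplies the (standard) details of why the uniform bound $B$ is washed out by the $1/n$ rescaling in the homogenisation.
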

\begin{proof}
This is an immediate consequence of Lemma~\ref{a useful property of BF quasimorphism}.
\end{proof}

Then we have the following:
\begin{proposition}\label{vanish on stabiliser of any balanced curve}
All homogeneous quasimorphisms $\widetilde{h_{w}}$ vanish on the stabiliser of any $\varepsilon$-balanced curve.
\end{proposition}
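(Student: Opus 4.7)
The plan is to exploit the basepoint-independence of $\widetilde{h_w}$ established in Proposition~\ref{homogenisation of BF quasimorphism is independent of the choice of the basepoint}. Given $g\in\Ham(S^2)$ that stabilises an $\varepsilon$-balanced curve $\alpha$, the curve $\alpha$ is itself a vertex of $\mathcal{C}^\dagger_\varepsilon(S^2)$, so we may compute $\widetilde{h_w}(g)$ using the basepoint $x_0=\alpha$. With this choice, $g^n(\alpha)=\alpha$ for every $n\in\mathbb{Z}$, and the proof reduces to showing that $h_w$ vanishes on any group element that fixes the basepoint.

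To see that $h_w(g^n)=0$, I would examine the definition $h_w(g^n)=c_{w,R}(\alpha,\alpha)-c_{w^{-1},R}(\alpha,\alpha)$. Here $d(\alpha,\alpha)=0$, so everything comes down to evaluating the infimum $\inf_{\sigma}(|\sigma|-R|\sigma|_w)$ over paths $\sigma$ from $\alpha$ to $\alpha$. Since non-overlapping copies of $w$ in $\sigma$ each occupy $|w|$ edges, we always have $|\sigma|\geq |w|\cdot|\sigma|_w$, and because $R<|w|$ this gives
\[
|\sigma|-R|\sigma|_w\;\geq\;(|w|-R)|\sigma|_w\;\geq\;0.
\]
The trivial constant path at $\alpha$ attains the value $0$, so the infimum equals $0$. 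Therefore $c_{w,R}(\alpha,\alpha)=0$, and by the same argument $c_{w^{-1},R}(\alpha,\alpha)=0$, giving $h_w(g^n)=0$ for every $n$. Taking the homogenisation then yields $\widetilde{h_w}(g)=\lim_{n\to\infty}h_w(g^n)/n=0$.

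Finally, because Proposition~\ref{homogenisation of BF quasimorphism is independent of the choice of the basepoint} guarantees that the value $\widetilde{h_w}(g)$ does not depend on the choice of basepoint in the definition of $h_w$, the above computation — which used the specific basepoint $\alpha$ — gives the same answer as the one obtained from the originally fixed basepoint. Hence $\widetilde{h_w}$ vanishes on the stabiliser of every $\varepsilon$-balanced curve, as claimed. There is no real obstacle here: the only point that might seem delicate is verifying that the trivial path actually realises the infimum, and this is handled cleanly by the inequality $R<|w|$ which is built into the Bestvina--Fujiwara setup.
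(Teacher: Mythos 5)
Your proof is correct and follows exactly the same route as the paper: invoke basepoint-independence (Proposition~\ref{homogenisation of BF quasimorphism is independent of the choice of the basepoint}) to take $x_0=\alpha$, then observe $h_w(g^n)=c_{w,R}(\alpha,\alpha)-c_{w^{-1},R}(\alpha,\alpha)=0$. The paper leaves the computation of $c_{w,R}(\alpha,\alpha)=0$ as ``easy to compute by definition''; you have simply written out that verification, correctly using $R<|w|$ to see the infimum is attained by the constant path.
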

\begin{proof}
WLOG, we let $\alpha$ be an $\varepsilon$-balanced curve on $S^2$. And we denote the stabiliser of $\alpha$ by $\stab(\alpha)\coloneqq \{g\in \Ham(S^2) \mid g(\alpha)=\alpha\}$. Since $\widetilde{h_w}$ is independent of the choice of the basepoint by Proposition~\ref{homogenisation of BF quasimorphism is independent of the choice of the basepoint}, we can choose our basepoint to be $\alpha$. For any $g\in \stab(\alpha)$, it is easy to compute by definition that $\widetilde{h_w}(g)=0$.
\end{proof}

\subsection{\texorpdfstring{$C^0$-continuity of the quasimorphisms}{C0 continuity of the quasimorphisms}}

To prove the $C^0$-continuity of the quasimorphisms constructed in Theorem~\ref{Q(Diff) is infinite dimensional}, we need the following lemma proved by Entov--Polterovich--Py~\cite{entov2012continuity}:
\begin{lemma}(\cite{entov2012continuity})\label{Criterion for checking continuity}
Let $\mu\colon\Ham(S^2)\to \mathbb{R}$ be a homogeneous quasimorphism. Then $\mu$ is $C^0$-continuous if and only if there exists $a>0$ such that the following property
holds: For any disk $D\subset S^2$ of area less than $a$, the restriction of $\mu$ to the group $\Ham(D)$ vanishes.
\end{lemma}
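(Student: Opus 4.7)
The plan is to establish the equivalence in two parts. The backward direction $(\Leftarrow)$ is what the paper actually needs and follows from the $C^0$-fragmentation property of $\Ham(S^2)$ together with a ``power trick'' exploiting homogeneity; the forward direction $(\Rightarrow)$ relies on the displaceability of small disks and is more delicate.

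For $(\Leftarrow)$, the main input is the $C^0$-fragmentation result (originating with Fathi, adapted for Hamiltonian diffeomorphisms by Sikorav and Oh): there exist a $C^0$-neighbourhood $U$ of the identity and an integer $N$ such that every $f \in U$ admits a factorisation $f = f_1 \cdots f_N$ with each $f_i \in \Ham(D_i)$ for some disk $D_i$ of area less than $a$. Under the vanishing hypothesis, $\mu(f_i) = 0$ for every $i$, so the quasimorphism inequality $|\mu(f_1 \cdots f_N) - \sum_i \mu(f_i)| \le (N-1) D_\mu$ yields $|\mu| \le (N-1) D_\mu$ on $U$. To upgrade this uniform bound to $C^0$-continuity, I would use homogeneity as follows. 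For $f_n \to e$ in $C^0$ and any fixed $k \ge 1$, the telescoping estimate $d_{C^0}(f_n^k, e) \le k\, d_{C^0}(f_n, e)$ places $f_n^k$ in $U$ eventually, so $k|\mu(f_n)| = |\mu(f_n^k)| \le (N-1)D_\mu$; letting $k \to \infty$ after $n \to \infty$ forces $\mu(f_n) \to 0$, giving continuity at the identity. For continuity at a general $g$, I would set $u_n = f_n^k g^{-k}$ and check, using that $u_n$ decomposes as a product of $k$ conjugates of $v_n := f_n g^{-1}$, that $u_n \to e$ in $C^0$ as $n \to \infty$ for each fixed $k$; applying the quasimorphism inequality to $f_n^k = u_n \cdot g^k$ and combining with homogeneity gives
\[
k\,|\mu(f_n) - \mu(g)| \;\le\; D_\mu + |\mu(u_n)|,
\]
from which $\mu(f_n) \to \mu(g)$ follows by taking $\limsup_n$ and then $k \to \infty$.

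For $(\Rightarrow)$, $C^0$-continuity of $\mu$ immediately gives a uniform bound on $\mu$ over some $C^0$-neighbourhood of $e$. To deduce vanishing on $\Ham(D)$ for sufficiently small disks $D$, I would choose $a$ so small that every such $D$ is displaceable by some $h \in \Ham(S^2)$, and for $f \in \Ham(D)$ exploit the commuting conjugates $f, hfh^{-1}, h^2 f h^{-2}, \ldots$ supported in the disjoint translates $h^j(D)$: products of these all have the same $\mu$-value as $f$ but can be related, via explicit $C^0$-small modifications and the quasimorphism inequality, to elements lying in the $C^0$-neighbourhood on which $\mu$ is bounded. This then forces $n\mu(f)$ to be bounded in $n$, hence $\mu(f) = 0$. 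The main obstacle is making this displacement and commuting-conjugates argument precise; this is where symplectic rigidity genuinely enters, while $(\Leftarrow)$ is essentially formal once $C^0$-fragmentation is available.
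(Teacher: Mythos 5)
The paper does not actually prove this lemma---its ``proof'' is the single line ``See \cite[Theorem~3]{entov2012continuity}''---so there is nothing to compare line by line; your sketch has to be judged on its own merits. Your $(\Leftarrow)$ direction, which is the only direction the paper uses, is correct and is essentially the Entov--Polterovich--Py route: a quantitative fragmentation statement (uniform bound $N$ on the number of factors, each supported in a disk of area $<a$, valid on a whole $C^0$-neighbourhood of the identity) gives boundedness of $\mu$ near $\id$, and your homogeneity arguments (the estimate $d_{C^0}(f^k,e)\le k\,d_{C^0}(f,e)$ at the identity, and the decomposition $f^kg^{-k}=\prod_{j=0}^{k-1}g^j(fg^{-1})g^{-j}$ at a general point) correctly upgrade this to continuity; this second step is a standard Shtern-type argument. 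Be aware, though, that the fragmentation input in exactly this form (uniform $N$ \emph{and} area control on the supports) is the real content of the implication and is established in \cite{entov2012continuity} itself rather than being an off-the-shelf consequence of Fathi/Sikorav/Oh, whose results concern measure-preserving homeomorphisms and $C^0$-approximation respectively.

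Your $(\Rightarrow)$ direction has a genuine gap. First, on $S^2$ the translates $h^j(D)$, $j=0,\dots,n-1$, can be pairwise disjoint only for $n\lesssim 1/\mathrm{area}(D)$, so you cannot let $n\to\infty$; second, the product of $n$ commuting conjugates $\prod_j h^jfh^{-j}$ has $\mu$-value $n\mu(f)$ (by conjugation-invariance and additivity on commuting elements), not $\mu(f)$ as you write, and this product is in no sense $C^0$-small, so there is no route from it into the neighbourhood on which $\mu$ is bounded. The displacement mechanism you are gesturing at only proves that $\mu|_{\Ham(D)}$ is a homomorphism, hence a multiple of the Calabi homomorphism, which does not vanish by itself. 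The correct argument is both simpler and avoids displaceability entirely: by continuity, $|\mu|\le\varepsilon$ on some ball $V=\{g:\ d_{C^0}(g,\id)<\rho\}$; choose $a$ so small that a metric disk of area $a$ has diameter $<\rho$; any smooth disk $D$ of area $<a$ is carried by some $\phi\in\Ham(S^2)$ onto such a metric disk $D'$, and every $g\in\Ham(D')$ together with \emph{all} of its powers moves points by at most $\mathrm{diam}(D')<\rho$, so $|\mu(g)|=\lim_n|\mu(g^n)|/n=0$; conjugation-invariance of $\mu$ then transfers the vanishing from $\Ham(D')$ to $\Ham(D)$.
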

\begin{proof}
See~\cite[Thoerem~3]{entov2012continuity}.
\end{proof}

\begin{corollary}\label{the quasimorphism we built is continuous}
All homogeneous quasimorphisms $\widetilde{h_{w}}$ are $C^{0}$-continuous on $\Ham(S^2)$.
\end{corollary}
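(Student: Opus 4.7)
The plan is to apply the Entov--Polterovich--Py criterion (Lemma~\ref{Criterion for checking continuity}) directly: it suffices to find some $a>0$ such that $\widetilde{h_w}$ vanishes on every $\Ham(D)$ with $\mathrm{area}(D)<a$. By Proposition~\ref{vanish on stabiliser of any balanced curve}, $\widetilde{h_w}$ vanishes on the stabiliser of any $\varepsilon$-balanced curve, so it is enough to show that for a small enough disk $D$, every element of $\Ham(D)$ lies in the stabiliser of some $\varepsilon$-balanced curve. This reduces the problem to the elementary geometric task of producing such a curve disjoint from $D$.

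First I would set $a = 1 - \varepsilon$, which is positive because $\varepsilon \leq \tfrac{1}{2}$. Given any open disk $D \subset S^2$ with $\mathrm{area}(D) < a$, I will construct an $\varepsilon$-balanced curve $\alpha$ disjoint from $\bar{D}$. Since $\varepsilon \leq \tfrac{1}{2} \leq 1-\varepsilon$ and $\mathrm{area}(D) < 1-\varepsilon$, the interval $[\max(\mathrm{area}(D), \varepsilon),\, 1-\varepsilon]$ is non-empty. Pick any $s$ in this interval; by continuity of area along a family of nested closed disks containing $D$, there is a smooth simple closed curve $\alpha \subset S^2 \setminus \bar{D}$ bounding a closed disk $U \supset D$ of area exactly $s$. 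The two complementary components of $\alpha$ in $S^2$ have areas $s$ and $1-s$, both lying in $[\varepsilon, 1-\varepsilon]$, so $\alpha$ is $\varepsilon$-balanced by definition.

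Next, given any $f \in \Ham(D)$, $f$ is the identity on $S^2 \setminus D$ and hence pointwise fixes $\alpha$; in particular $f(\alpha)=\alpha$, so $f$ lies in the stabiliser of $\alpha$. Proposition~\ref{vanish on stabiliser of any balanced curve} then yields $\widetilde{h_w}(f)=0$. Since this holds for every $f \in \Ham(D)$ whenever $\mathrm{area}(D) < a$, Lemma~\ref{Criterion for checking continuity} applies with this value of $a$, and we conclude that $\widetilde{h_w}$ is $C^0$-continuous on $\Ham(S^2)$.

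There is no real obstacle here: all the substance has already been packaged into Proposition~\ref{vanish on stabiliser of any balanced curve} and the Entov--Polterovich--Py criterion of Lemma~\ref{Criterion for checking continuity}. The only point to verify is the elementary observation that a disk of area less than $1-\varepsilon$ on $S^2$ is disjoint from some $\varepsilon$-balanced curve, which the interval computation above makes explicit.
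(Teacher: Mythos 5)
Your proof is correct and follows essentially the same route as the paper: both apply the Entov--Polterovich--Py criterion (Lemma~\ref{Criterion for checking continuity}) by exhibiting, for any sufficiently small disk $D$, a balanced curve disjoint from $D$ whose stabiliser contains $\Ham(D)$, and then invoking Proposition~\ref{vanish on stabiliser of any balanced curve}. The only (immaterial) difference is that the paper takes $a<\tfrac{1}{2}$ and uses an equator, whereas you take the slightly larger threshold $a=1-\varepsilon$ and a general $\varepsilon$-balanced curve.
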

\begin{proof}
Pick $a\in(0,\frac{1}{2})$. Then for any disk $D\subset S^2$ of area less than $a$, we can find an equator $L$ such that $\Ham(D)\subset \stab(L)\coloneqq \{f\in \Ham(S^2) \mid f(L)=L\}$. Then by Proposition~\ref{vanish on stabiliser of any balanced curve}, we know that $\widetilde{h_{\omega}}$ vanishes on $\Ham(D)$.
\end{proof}

To prove the rest of Theorem~\ref{Q(Diff) is infinite dimensional}, we require another lemma introduced by Entov--Polterovich--Py~\cite{entov2012continuity}:
\begin{lemma}\label{extension lemma}(\cite{entov2012continuity})
Let $\Lambda$ be a topological group and let $\Gamma\subset\Lambda$ be a dense subgroup. Any continuous homogeneous quasimorphism on $\Gamma$ extends to a continuous homogeneous quasimorphism on $\Lambda$.
\end{lemma}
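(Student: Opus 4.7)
The plan is to define the extension by taking a limit along a sequence (or net) approximating $\lambda \in \Lambda$ from $\Gamma$, and to use homogeneity to upgrade the defect-bounded estimate into genuine convergence. Fix a continuous homogeneous quasimorphism $\mu:\Gamma\to\mathbb{R}$ with defect $D$. For each $\lambda\in\Lambda$, choose $\gamma_n\in\Gamma$ with $\gamma_n\to\lambda$ and set $\tilde\mu(\lambda):=\lim_n \mu(\gamma_n)$, once we verify that the limit exists and is independent of the approximating sequence.

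The heart of the argument, and the step I expect to be the main technical obstacle, is showing that $\mu(\gamma_n)$ is Cauchy. The naive quasimorphism bound yields only $|\mu(\gamma_n)-\mu(\gamma_m)|\leq D+|\mu(\gamma_n\gamma_m^{-1})|$; continuity of $\mu$ at the identity (combined with $\mu(e)=0$, which follows from homogeneity) controls the second term, but leaves a gap of size $D$ that does not shrink. To close this gap, fix $k\in\mathbb{N}$ and use the continuity of the power map and multiplication in the topological group $\Lambda$ to conclude that $\gamma_n^k \gamma_m^{-k} \to e$ jointly as $n,m\to\infty$, so that $|\mu(\gamma_n^k \gamma_m^{-k})|<\varepsilon$ eventually. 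Combining homogeneity, which gives $\mu(\gamma_m^{-k})=-\mu(\gamma_m^k)$ and $\mu(\gamma_n^k)=k\mu(\gamma_n)$, with the quasimorphism inequality yields
\[
k\,|\mu(\gamma_n)-\mu(\gamma_m)| \;=\; |\mu(\gamma_n^k)+\mu(\gamma_m^{-k})| \;\leq\; D+|\mu(\gamma_n^k \gamma_m^{-k})| \;\leq\; D+\varepsilon,
\]
so $|\mu(\gamma_n)-\mu(\gamma_m)|\leq (D+\varepsilon)/k$. Since $k$ is arbitrary, the sequence is Cauchy. Independence of the chosen sequence is then verified by interleaving two approximating sequences and running the same argument.

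The remaining properties of $\tilde\mu$ follow from routine limiting arguments. For the quasimorphism defect, if $\gamma_n\to\lambda_1$ and $\gamma_n'\to\lambda_2$, then $\gamma_n\gamma_n'\to\lambda_1\lambda_2$, and passing to the limit in $|\mu(\gamma_n\gamma_n')-\mu(\gamma_n)-\mu(\gamma_n')|\leq D$ gives a defect of at most $D$ for $\tilde\mu$. Homogeneity is immediate: $\tilde\mu(\lambda^k)=\lim_n\mu(\gamma_n^k)=k\lim_n\mu(\gamma_n)=k\tilde\mu(\lambda)$. For continuity of $\tilde\mu$ on $\Lambda$, given $\lambda_j\to\lambda$, diagonal approximation selects $\gamma_j\in\Gamma$ close enough to $\lambda_j$ that $|\mu(\gamma_j)-\tilde\mu(\lambda_j)|<1/j$; then $\gamma_j\to\lambda$ and $\mu(\gamma_j)\to\tilde\mu(\lambda)$, forcing $\tilde\mu(\lambda_j)\to\tilde\mu(\lambda)$. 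If $\Lambda$ is not first countable the argument goes through verbatim with nets in place of sequences, reflecting the general fact that homogeneous quasimorphisms are rigid enough to pass through density in topological groups, whereas arbitrary quasimorphisms are not.
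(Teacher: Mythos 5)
Your proof is correct; the paper does not prove this lemma itself but simply cites Entov--Polterovich--Py \cite{entov2012continuity}, and your argument is the standard one used there: define the extension by limits along approximating sequences (or nets), and use homogeneity via the estimate $k\,|\mu(\gamma_n)-\mu(\gamma_m)|\le D+|\mu(\gamma_n^k\gamma_m^{-k})|$ together with continuity of $\mu$ at the identity to kill the defect term and obtain the Cauchy property. The remaining verifications (well-definedness by interleaving, defect and homogeneity by passing to the limit, continuity by diagonal approximation) are carried out correctly.
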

\begin{proof}
See~\cite[Proposition~2]{entov2012continuity}.
\end{proof}

By Corollary~\ref{the quasimorphism we built is continuous} and Lemma~\ref{extension lemma}, we know that $\widetilde{h_{w}}$ can be extended continuously to $\mathrm{Homeo_{0}}(S^2,\omega)$ under the $C^0$-topology. With this, the proof of Theorem~\ref{Q(Diff) is infinite dimensional} is now fully established.

\begin{remark}
One could also construct $C^0$-continuous quasimorphisms on $\mathrm{Homeo_{0}}(S^2,\omega)$ by applying our method directly to a modified version of the $\varepsilon$-balanced curve graph, where we allow our $\varepsilon$-balanced curves to be continuous (not necessarily smooth) simple closed curves.
Since the proofs of the connectedness and hyperbolicity do not require any differentiability for the curves, the connectedness and hyperbolicity are still true for the modified graph.
Moreover, the group $\mathrm{Homeo_{0}}(S^2,\omega)$ acts on the modified graph by isometries.
The remaining part for running the Bestvina--Fujiwara machine is almost the same as the case of $\Ham(S^2)$.
\end{remark}
\section{An alternative `equator conjecture'}\label{section-Alternative equator theorem}
\begin{definition}\label{quantitative fragmentation metric}
Let $0<A<1$ and let $f\in\Ham(S^2)$. The $A$\emph{-quantitative fragmentation norm} $|f|_A$ is the minimal $N$ such that $f=h_1\dots h_N$, for some $h_1,\dots,h_N\in\Ham(S^2)$, such that each $h_i$ is supported on some disk of area at most $A$.

Let $\alpha$ and $\beta$ be equators of $S^2$. The $A$-quantitative fragmentation metric $d_A(\alpha,\beta)$ is the minimal possible value of the $A$-quantitative fragmentation norm $|f|_A$, over all Hamiltonian diffeomorphisms $f$ such that $f(\alpha)=\beta$.
\end{definition}

We now prove Theorem~\ref{alternative equator conjecture}, using Theorem~\ref{Q(Diff) is infinite dimensional}. The argument here, which relies on having a (coarsely) Lipschitz unbounded quasimorphism that vanishes on stabilisers of equators, follows along similar lines to that of Khanevsky's argument in \cite{khanevsky2009hofer} showing that the space of diameters has infinite Hofer metric.

In fact, our proof of Theorem~\ref{alternative equator conjecture} also applies to the orbit of an $\varepsilon$-balanced curve under $\Ham(S^2)$ or $\Homeo_0(S^2,\omega)$, which is implicit in the following proof.

\begin{proof}[Proof of Theorem~\ref{alternative equator conjecture}]
Given any $0<A<1$, we pick $\varepsilon\in (0,\frac{1}{2})$ sufficiently small such that $$\varepsilon<\min(A,1-A).$$
Fix any equator (or indeed, any $\varepsilon$-balanced curve) $\alpha$ of $S^2$.
Pick a non-trivial homogeneous quasimorphism $\varphi$ on $\Ham(S^2)$, as in Theorem~\ref{Q(Diff) is infinite dimensional}, and denote by $D=D(\varphi)$ the defect of $\varphi$.
Then given any $K>0$, there exists an element $f\in \Ham(S^2)$ such that $\varphi(f)\geq (K+1)D$.
By definition, we have
$$
d_A(\alpha,f\alpha)=\inf_{f\alpha=g\alpha} |g|_A.
$$
Now, whenever $g\alpha=f\alpha$, we have $g^{-1}f\alpha=\alpha$, which means that $g^{-1}f$ is in the stabiliser of $\alpha$.
Therefore, $\varphi(g^{-1}f)=0$, which implies that $|\varphi(g)-\varphi(f)|\leq D$.
Hence, $\varphi(g)\geq \varphi(f)-D$.

Now, let $N=|g|_A$, and so there exist disk-supported maps $h_1,\dots,h_N\in \Ham(S^2)$ with  area at most $A$ such that \[g=h_1\cdots h_N.\] 
We have $\varphi(h_i)=0$ for each $h_i$. This follows from the fact that each $h_i$ must stabilise some $\varepsilon$-balanced curve since we set $\varepsilon<\min(A,1-A)$.
Therefore, we obtain that 
$$
\varphi(g)\leq ND.
$$
Combining the above inequalities, we obtain
$$
|g|_A=N\geq \frac{\varphi(g)}{D}\geq \frac{\varphi(f)-D}{D}\geq K.
$$
Hence, we have $d_A(\alpha,f\alpha)\geq K$. But $K>0$ was arbitrary, so the diameter is infinite, as required.
\end{proof}

We should note here that if any of the quasimorphisms in Theorem~\ref{Q(Diff) is infinite dimensional} is Hofer-Lipschitz (for some choice of $\varepsilon\in (0,\frac{1}{2}]$), then a similar argument to the above will prove that the Hofer-diameter of the space of equators is infinite.

\printbibliography

\noindent{\sc Yongsheng JIA}\\
\noindent{\sc The University of Manchester, Department of Mathematics, M13 9PL, Manchester, UK}

\noindent{\it Email address:} {\tt \href{mailto:yongsheng.jia@manchester.ac.uk}{
yongsheng.jia@manchester.ac.uk}}

\vspace{1cm}

\noindent{\sc Richard Webb}\\
\noindent{\sc The University of Manchester, Department of Mathematics, M13 9PL, Manchester, UK}

\noindent{\it Email address:} {\tt \href{mailto:Richard.Webb@manchester.ac.uk}{Richard.Webb@manchester.ac.uk}}

\end{document}